\providecommand{\tabularnewline}{\\}
\theoremstyle{plain}
\newtheorem{thm}{\protect\theoremname}
\theoremstyle{plain}
\newtheorem{lem}[thm]{\protect\lemmaname}
\theoremstyle{plain}
\newtheorem{prop}[thm]{\protect\propositionname}
\theoremstyle{remark}
\newtheorem{rem}[thm]{\protect\remarkname}
\providecommand{\lemmaname}{Lemma}
\providecommand{\propositionname}{Proposition}
\providecommand{\remarkname}{Remark}
\providecommand{\theoremname}{Theorem}
\begin{document}
\title{Stochastic Clearing Systems with Multiple Input Processes}
\author{$\text{Bo Wei}$$\qquad$$\text{S{\i}la Çetinkaya}$$\qquad$ Daren
B.H. Cline}
\date{}
\maketitle
\begin{abstract}
In this paper, we consider stochastic clearing systems with multiple
drifted Brownian motion inputs. First, we propose an instantaneous
rate policy, which is shown to be the optimal one among a large class
of renewal type clearing policies in terms of average cost. Second,
we propose a service measure about average weighted delay rate, and
provide a unified method to calculate the service measure under different
clearing policies. Moreover, we prove that under a fixed clearing
frequency, the instantaneous rate policy outperforms a large class
of clearing policies, and the instantaneous rate hybrid policy performs
better than time-based policy, in terms of average weighted delay
rate.
\end{abstract}

\section{Introduction}

\emph{``Stochastic clearing systems are characterized by a stochastic
input process and an output mechanism that intermittently clears the
system''} \cite{Stid77}. A discussion of applications can be found
in \cite{KPS03,Stid74,Stid77,Stid86,Whit81}. In logistics, shipment
consolidation is the strategy of combining small size shipments or
customer orders, i.e., input process realizations, into a larger load.
The purpose of shipment consolidation is achieving scale economies
and increasing resource utilization. The customer orders represent
the stochastic input process. The consolidated loads are dispatched
at specific times that correspond to clearing instances. Hence, a
shipment consolidation system can be considered as a stochastic clearing
system. For practical examples of shipment consolidation, the reader
is referred to \cite{CB03}. Vehicle dispatching is another major
application of stochastic clearing systems. Passengers arrive at the
bus station randomly, and a vehicle dispatching policy determines
the capacity and instants in time at which vehicles are dispatched,
e.g. \cite{RT82,ROSS69,TZ79,ZT80}. The passengers arrival represent
as the stochastic input process, and the vehicle dispatching policy
is considered as the output clearing mechanism.

In this paper, we are interested in the cost-based optimization and
service performance of stochastic clearing model with multiple input
processes. In particular, we develop a unified method from a martingale
point of view to calculate the average cost and the average weighted
delay rate (a service measure we proposed) under different clearing
policies. Moreover, we provide several comparative results and optimization
solutions among alternative clearing policies in terms of average
cost and average weighted delay rate.

\subsection{Related Work}

In stochastic clearing systems literature, \cite{Stid74} considers
the case that the system is cleared when the quantity in the system,
$y$, exceeds the threshold $q$, and derives the explicit expression
of the limiting distribution of the quantity in the system. \cite{Stid77}
studies the optimal level of $q$, to minimize the average cost, where
there are fixed clearing and variable holding costs. In \cite{KPS03},
the stock level process is assumed as a superposition of a drifted
Brownian motion and a compound Poisson process, reflected at zero
and some cost functionals for this stochastic clearing system are
introduced under several clearing policies. However, no optimization
issues are considered in \cite{KPS03}. For the other work in stochastic
clearing systems, see \cite{Stid86,Whit81}.

As mentioned, shipment consolidation is an application of stochastic
clearing systems. In all previous work in shipment consolidation,
only specific consolidation policies have been investigated. Three
classes of shipment consolidation policies are common in practice:
quantity-based policy (QP), time-based policy (TP), and hybrid policy
(HP). The QP is aimed at consolidating a target load before releasing
a shipment to assure scale economies. Under a time-based policy, consolidated
shipments are released at periodic intervals to achieve timely delivery.
Under HP, the goal is to consolidate a target load. However, if the
time since the last shipment epoch exceeds a certain limit, then a
shipment decision is made immediately \cite{MCB10}. Early work in
shipment consolidation model focuses on simulation approaches. For
a review of earlier work, see \cite{CETI04}. More recent work places
an emphasis on analytical models. A detailed account of the analytical
literature is provided in \cite{CETI04} and \cite{MCB10}. Previous
analytical work on shipment consolidation models assumes the input
process (also referred as demand process or arrival process) is a
Poisson process \cite{CL00,CML06,MCB10}, or a renewal process \cite{CB03,CTL08},
or a discrete time Markov chain \cite{BCH11,HB95}.

Most of previous results in shipment consolidation are aimed at optimization
of alternative policies under cost-based criteria. It is worth noting
that \cite{HB94}, \cite{CML06} and \cite{CMW14} consider the service
performance of the practical shipment consolidation policies introduced
above. According to the simulation result in \cite{HB94}, QP achieves
lower average cost than TP and HP. However, in terms of average waiting
time, HP outperforms QP and TP when parameter values are fixed. Using
simulation, in the integrated inventory/shipment consolidation setting,
\cite{CML06} reveal that, although HP is not superior to QP in terms
of the cost criteria, it is superior in terms of a service measure:
average waiting time. However, the observations in \cite{HB94} and
\cite{CML06} are based on detailed simulation studies. Recently,
\cite{CMW14} attempts to provide an analytical comparison for the
maximum waiting time (MWT) and the average order delay(AOD). Specifically,
they show that under fixed policy parameters, $q$ and/or $T$, HP
outperforms QP and TP, in terms not only of $P(MWT>t)$ (for any $t>0$),
but also of AOD. On the other hand, under a fixed expected consolidation
cycle length, QP achieves the least AOD, compared with all other practical
policies.

Another application of stochastic clearing systems is found in vehicle
dispatching. The vehicle dispatching with non-stationary Poisson arrival
is studied in \cite{RT82}, and the optimality of some dispatching
policy is shown by impulsive control of jump Markov processes. Three
policies are proposed in \cite{TZ79} for vehicle dispatching, (i)
a $C$-capacity policy; (ii) a dispatching frequency policy $T$;
(iii) a $(T,C)$ policy. The average cost models are derived under
the three policies, and two firm models with cooperative and non-cooperative
solution modes are discussed.

In queueing system, different operating control policies are also
proposed in cost-based optimization models. A queueing control model
becomes a stochastic clearing system if the service rate is infinite.
\cite{YN63} introduces the concept of a controllable queueing system.
\cite{YN63} and \cite{Heym68} study the $N$-policy, where the server
restarts providing service when there are $N$ waiting customers present
in the system after the end of last busy period. \cite{Heym77} introduces
the $T$-policy, where the server is turned on after an interval of
$T$ units time, provided that the server finds any customers waiting
in the system, and shows that the optimal $N$-policy performs better
than the optimal $T$-policy in terms of average cost. \cite{Bala73}
and \cite{BT75} introduce the $D$-policy, which is to turn the server
on when the total workload for all customers in the waiting line reaches
$D$. \cite{GRS95} considers the distributions and first moments
of the busy and idle periods in controllable M/G/1 queueing systems
operating under simple and dyadic policies. Moreover, several works
are aimed at comparing those operating policies based on different
cost criteria \cite{Arta02,Boxm76,FK02,LM00}.

\subsection{Contributions}

We summarize our contributions as follows:
\begin{enumerate}
\item To the best of our knowledge, this is the first work dealing with
stochastic clearing systems with multiple input processes (drifted
Brownian motions). We point out that in this setting, the optimal
quantity based policy may not achieve less average cost than the optimal
time based policy, which is essentially different from the result
in single drifted Brownian motion input process case.
\item We identify a set of $(T_{Q}+T)$ type policies and show that the
jointly optimal $(T_{Q}+T)$-policy is either the optimal quantity
policy, or the optimal time based policy. More importantly, an instantaneous
rate policy (IRP) is proposed, which is shown to be the optimal one
among a large class of renewal type clearing policies, in terms of
average cost.
\item We provide a unified method to calculate both average cost and average
weighted delay rate (AWDR) for a class of renewal type clearing policies,
from a martingale perspective and with the aid of the martingale stopping
theorem.
\item In terms of AWDR, we show that with a fixed clearing frequency, an
IRP achieves the lowest AWDR, among a large class of renewal-type
clearing policies. Based on IRP, an instantaneous rate hybrid policy
(IRHP) (which has an upper bound on the cycle time, in contrast to
IRP) is proposed and a noteworthy result is that, with a fixed clearing
frequency, IRHP achieves less AWDR and less average cost than the
time-based policy.
\end{enumerate}
The remainder of this work is organized as follows. In Section \ref{sec:Problem-Description},
we give the problem description and define average cost and service
performance criteria. Section \ref{sec:Average-Cost-Model} provides
several comparative results and optimization solution using long-run
average cost criterion among different types of clearing policies.
In Section \ref{sec:Service-Performance-Model}, we propose measuring
performance with the average weighted delay rate, and provide the
comparative results in terms of it under a fixed clearing frequency.
Finally, the paper concludes in Section \ref{sec:Conclusions}.

\section{Problem Description\label{sec:Problem-Description}}

Assume there are $n$ different types of items, and the cumulative
demand of the $i$-th type of items $N_{i}(t)$ is a Brownian motion
with drift given by $N_{i}(t)=D_{i}t+\sigma_{i}B_{i}(t)$, where $i=1,2,\ldots,n$,
$D_{i}>0$, $\sigma_{i}>0$ are the drift coefficient and diffusion
coefficient, respectively, and $B_{1}(t),B_{2}(t),\ldots,B_{n}(t)$
are independent standard Brownian motions. Denote $D\triangleq\sum_{i=1}^{n}D_{i}$,
and $\sigma^{2}\triangleq\sum_{i=1}^{n}\sigma_{i}^{2}$. We assume
that different types of items have different unit transport cost,
and different waiting costs per unit per unit time since customers
have a distinctly different waiting sensitivity for different types
of items. All items would be packaged at the collection depot and
await the delivery. We take into account for the following parameters:
$A_{D}$ is the fixed cost of a clearing, for $i=1,2,\ldots,n$, $c_{i}$
is transport cost for one unit item of the $i$-th type, and $\omega_{i}$
is the customer waiting cost for the $i$-th type of items per unit
per unit time.

We only consider renewal-type clearing policies. Under a renewal-type
clearing policy, the consolidated load forms a regenerative process
with the clearing instants as regeneration points. This regenerative
process structure allows us to employ the renewal arguments. In this
work, we consider two criteria, one is for cost-based model, and the
other one is for service performance based model, which are introduced
as follows.

\subsection{Average Cost Criterion\label{subsec:Average-Cost-Criterion}}

The first objective of this work is analyzing and optimizing the average
cost criterion. Each renewal-type clearing policy corresponds to a
clearing cycle $\tau$, which is a stopping time. Under a clearing
policy with clearing cycle $\tau$, shipment cost within one cycle
is $A_{D}+\mathrm{\mathbb{E}}[\sum_{i=1}^{n}c_{i}N_{i}(\tau)]$, and
waiting cost within one cycle is $\mathrm{\mathbb{E}}[\sum_{i=1}^{n}\int_{0}^{\tau}\omega_{i}N_{i}(u)du]$,
and thus the long-run average cost per unit-time can be obtained by
the renewal reward theorem as follows,
\[
AC=\frac{\mathbb{E}[\text{Clearing\ \ Cycle\ \ Cost}]}{\mathbb{E}[\text{Clearing\ \ Cycle\ \ Length}]}=\frac{A_{D}+\mathrm{\mathbb{E}}[\sum_{i=1}^{n}c_{i}N_{i}(\tau)]+\mathrm{\mathbb{E}}[\sum_{i=1}^{n}\int_{0}^{\tau}\omega_{i}N_{i}(u)du]}{\mathrm{\mathbb{E}}[\tau]}.
\]

In Section \ref{sec:Average-Cost-Model}, we will propose several
clearing policies and provide the associated optimization results
in terms of average cost.

\subsection{Service Performance Criterion\label{subsec:Service-Performance-Criterion}}

The second objective of this work is to analyze and optimize a service
criterion. Customer waiting occurs in stochastic clearing systems,
since the input is not cleared immediately, and instead, the inputs
are accumulated before each clearing. One important service measure
indicator is average weighted delay per unit time before each clearing
action. This indicator is similar to AOD proposed in \cite{CMW14},
and readers who are interested in its managerial implications are
referred to \cite{CMW14}. Under a clearing policy with clearing cycle
$\tau$, the average weighted delay rate can be obtained by applying
the renewal reward theorem, i.e.,
\begin{eqnarray*}
AWDR & = & \frac{\mathbb{E}[\mbox{Cumulative weighted waiting delay per clearing cycle}]}{\mathrm{\mathbb{\mathrm{\mathbb{E}}}}[\mbox{Clearing cycle length}]}\\
 & = & \frac{\mathbb{E}[W]}{\mathbb{E}[L]}=\frac{\mathbb{E}[\sum_{i=1}^{n}\int_{0}^{\tau}\omega_{i}N_{i}(u)du]}{\mathbb{E}[\tau]}.
\end{eqnarray*}
where $W$ denotes the cumulative weighted waiting delay within one
consolidation cycle, and $L$ denotes the consolidation cycle length.
In Section \ref{sec:Service-Performance-Model}, we will propose several
clearing policies and provide comparative results in terms of AWDR.

\section{Average Cost Model\label{sec:Average-Cost-Model}}

In this section, we propose several clearing policies and discuss
the optimization results in terms of average cost criterion. Based
on a martingale argument, a unified formula is provided to compute
the expected cumulative waiting time in Subsection \ref{subsec:A-Unified-Formula}.
In Subsection \ref{subsec:Quantity Policy and Time Policy}, we calculate
the average costs under quantity policy and time policy respectively,
and point out the optimal quantity policy may not achieve less average
cost than the optimal time policy, which is essentially different
from the single input case. Further, in Subsection \ref{subsec:T_Q+T Policy},
we propose $(T_{Q}+T)$ type polices and show that either the optimal
quantity policy or the optimal time policy is the best one, depending
on whether $\sum_{i=1}^{n}\omega_{i}(2D\sigma_{i}^{2}-D_{i}\sigma^{2})$
is positive or negative. Later on, we propose an instantaneous rate
policy and demonstrate the optimal IRP achieves less average cost
than all $(T_{Q}+T)$ type polices in Subsection \ref{subsec:Instantaneous-Rate-Policy}.
Finally, in Subsection \ref{subsec:Martingale-Argument-for IRP},
we prove that the optimal IRP achieves the least average cost, among
all renewal type clearing policies with which cycle times are of finite
second moment.

\subsection{\label{subsec:A-Unified-Formula}A Unified Formula for Expected Cumulative
Waiting Time}

The goal of this subsection is to provide a unified formula to compute
the expected cumulative waiting time for $i$-th type of items within
one clearing cycle $\tau$, i.e., $\mathbb{E}\left[\int_{0}^{\tau}N_{i}(u)du\right]$,
for any $i=1,2,\ldots,n$. This formula is derived from a martingale
with the aid of the martingale stopping theorem. We denote the natural
filtration $\{\mathcal{G}_{t}\}$, which is the $\sigma$ field generated
by the family of demand process $\left\{ N_{1}(s),N_{2}(s),\ldots,N_{n}(s),s\in[0,t]\right\} $.
The following lemma reveals this martingale.
\begin{lem}
\label{martingale} For any $i=1,2,\ldots,n$,
\[
\left\{ \int_{0}^{t}N_{i}(u)du-\frac{1}{2D_{i}}N_{i}^{2}(t)+\frac{\sigma_{i}^{2}}{2D_{i}^{2}}N_{i}(t)\right\} _{t\geq0}
\]
is a martingale with respect to the natural filtration $\{\mathcal{G}_{t}\}$.
\end{lem}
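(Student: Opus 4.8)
The plan is to verify the martingale property directly by computing the drift (the $dt$-term in the It\^o differential) of the candidate process $M_i(t) := \int_0^t N_i(u)\,du - \frac{1}{2D_i}N_i^2(t) + \frac{\sigma_i^2}{2D_i^2}N_i(t)$ and checking that it vanishes. Since each $N_i(t) = D_i t + \sigma_i B_i(t)$ is a one-dimensional drifted Brownian motion, the dynamics are $dN_i = D_i\,dt + \sigma_i\,dB_i$ with $(dN_i)^2 = \sigma_i^2\,dt$, so It\^o's formula applies to the smooth function $f(x) = x^2$ and to the (absolutely continuous) integral term separately. I would expect the computation to run as follows.

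First, $d\!\left(\int_0^t N_i(u)\,du\right) = N_i(t)\,dt$, with no martingale part. Second, by It\^o, $d\big(N_i^2(t)\big) = 2N_i(t)\,dN_i(t) + (dN_i)^2 = 2N_i(t)\big(D_i\,dt + \sigma_i\,dB_i(t)\big) + \sigma_i^2\,dt$, so its drift is $2D_i N_i(t) + \sigma_i^2$ and its martingale part is $2\sigma_i N_i(t)\,dB_i(t)$. Third, $d\big(N_i(t)\big) = D_i\,dt + \sigma_i\,dB_i(t)$, drift $D_i$. Assembling, the drift of $M_i(t)$ is
\[
N_i(t) - \frac{1}{2D_i}\big(2D_i N_i(t) + \sigma_i^2\big) + \frac{\sigma_i^2}{2D_i^2}\cdot D_i
= N_i(t) - N_i(t) - \frac{\sigma_i^2}{2D_i} + \frac{\sigma_i^2}{2D_i} = 0,
\]
so $M_i(t)$ has zero drift, and its It\^o differential is purely the stochastic integral $\big(-\frac{\sigma_i}{D_i}N_i(t) + \frac{\sigma_i^3}{2D_i^2}\big)dB_i(t)$, which is a local martingale with respect to the natural filtration $\{\mathcal{G}_t\}$ (note $B_i$ is $\mathcal{G}_t$-adapted since $B_i(t) = \sigma_i^{-1}(N_i(t) - D_i t)$).

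To upgrade from local martingale to genuine martingale I would check an integrability condition: the integrand $-\frac{\sigma_i}{D_i}N_i(t) + \frac{\sigma_i^3}{2D_i^2}$ is, for each fixed $t$, Gaussian, hence in $L^2$, and $\mathbb{E}\int_0^t\!\big(\frac{\sigma_i}{D_i}N_i(u) - \frac{\sigma_i^3}{2D_i^2}\big)^2 du < \infty$ for every $t$ since $\mathbb{E}[N_i(u)^2]$ grows only polynomially in $u$; this makes the stochastic integral a true $L^2$-martingale. Alternatively one can give an elementary self-contained argument avoiding the general $L^2$ theory: write $M_i(t) - M_i(s)$ and condition on $\mathcal{G}_s$, using that $N_i(t) - N_i(s)$ is independent of $\mathcal{G}_s$ with known mean and variance, and that $\int_s^t N_i(u)\,du = N_i(s)(t-s) + \int_s^t (N_i(u)-N_i(s))\,du$, then compute $\mathbb{E}[\,\cdot\mid\mathcal{G}_s]$ term by term; the cross terms involving $N_i(s)$ cancel exactly by the same arithmetic as in the drift computation above, leaving $\mathbb{E}[M_i(t)-M_i(s)\mid\mathcal{G}_s]=0$. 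The only mildly delicate point is the integrability/localization step; the drift cancellation itself is a short and routine It\^o computation, so I do not anticipate a substantive obstacle.
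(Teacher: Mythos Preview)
Your proposal is correct. Your primary route---computing the It\^o differential of $M_i(t)$, observing the drift cancels, and then upgrading the resulting stochastic integral to a true martingale via the $L^2$ isometry---is sound, and the integrability check is straightforward since $\mathbb{E}[N_i(u)^2]$ is polynomial in $u$.

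The paper, however, takes exactly the elementary alternative you sketch at the end: it computes $\mathbb{E}\bigl[\int_0^t N_i(u)\,du \mid \mathcal{G}_s\bigr]$, $\mathbb{E}[N_i^2(t)\mid\mathcal{G}_s]$, and $\mathbb{E}[N_i(t)\mid\mathcal{G}_s]$ directly from the stationary independent increments of $N_i$, and verifies that the conditional expectation of the candidate at time $t$ equals its value at time $s$. Your It\^o approach has the advantage of being mechanical and immediately exhibiting the martingale part as an explicit stochastic integral (useful if one later wants quadratic variation or Burkholder--Davis--Gundy bounds), at the cost of invoking stochastic calculus and the $L^2$ theory. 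The paper's bare-hands computation avoids It\^o entirely and needs only the first two moments of the Gaussian increment, which keeps the prerequisites minimal; either argument is short.
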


\begin{proof}
Since the drifted Brownian motion $N_{i}(t)$ has stationary independent
increment, for $s<t$, we have,
\begin{eqnarray*}
\mathrm{\mathbb{E}}\Bigl[\int_{0}^{t}N_{i}(u)du\mid\mathcal{G}_{s}\Bigr] & = & \int_{0}^{s}N_{i}(u)du+\mathrm{\mathbb{E}}\Bigl[\int_{s}^{t}N_{i}(u)du\mid\mathcal{G}_{s}\Bigr]\\
 & = & \int_{0}^{s}N_{i}(u)du+(t-s)N_{i}(s)+\mathrm{\mathbb{E}}\Bigl[\int_{0}^{t-s}N_{i}(u)du\Bigr]\\
 & = & \int_{0}^{s}N_{i}(u)du+(t-s)N_{i}(s)+\frac{1}{2}D_{i}(t-s)^{2},
\end{eqnarray*}

\[
\begin{array}{rcl}
\frac{1}{2D}\mathrm{\mathbb{E}}[N_{i}^{2}(t)\mid\mathcal{G}_{s}] & = & \frac{1}{2D_{i}}\left(\mathrm{\mathbb{E}}[(N_{i}(t)-N_{i}(s))^{2}\mid\mathcal{G}_{s}]+2N_{i}(s)\mathrm{\mathbb{E}}[N_{i}(t)-N_{i}(s)\mid\mathcal{G}_{s}]+N_{i}^{2}(s)\right)\\
 & = & \frac{1}{2D_{i}}\Bigl(\sigma_{i}^{2}(t-s)^{2}+D_{i}^{2}(t-s)+2D_{i}(t-s)N_{i}(s)+N_{i}^{2}(s)\Bigr),
\end{array}
\]
and
\[
\frac{\sigma_{i}^{2}}{2D_{i}^{2}}\mathrm{\mathbb{E}}[N_{i}(t)\mid\mathcal{G}_{s}]=\frac{\sigma_{i}^{2}}{2D_{i}^{2}}(N_{i}(s)+D_{i}(t-s)).
\]
Therefore,

\[
\mathrm{\mathbb{E}}\Bigl[\int_{0}^{t}N_{i}(u)du-\frac{1}{2D_{i}}N_{i}^{2}(t)+\frac{\sigma_{i}^{2}}{2D_{i}^{2}}N_{i}(t)\mid\mathcal{G}_{s}\Bigr]=\int_{0}^{s}N_{i}(u)du-\frac{1}{2D_{i}}N_{i}^{2}(s)+\frac{\sigma_{i}^{2}}{2D_{i}^{2}}N_{i}(s),
\]
which shows that $\int_{0}^{t}N_{i}(u)du-\frac{1}{2D_{i}}N_{i}^{2}(t)+\frac{\sigma_{i}^{2}}{2D_{i}^{2}}N_{i}(t)$
is a martingale.
\end{proof}
The next result gives a unified formula to calculate the expected
cumulative waiting time for $i$-th type of items within one clearing
cycle.
\begin{prop}
\label{unifiedformula}Let $\tau$ be a stopping time with finite
second moment, i.e., $\mathrm{\mathbb{E}}[\tau^{2}]<\infty$, then
the expected cumulative waiting time for $i$-th type ( $i=1,2,\ldots,n$)
of items within one clearing cycle $\tau$ is

\[
\mathbb{E}[\int_{0}^{\tau}N_{i}(u)du]=\frac{1}{2D_{i}}\mathbb{E}[N_{i}^{2}(\tau)]-\frac{\sigma_{i}^{2}}{2D_{i}^{2}}\mathbb{E}[N(\tau)]=\frac{1}{2}D_{i}\mathrm{\mathbb{E}}\left[\tau^{2}\right]+\sigma_{i}\mathrm{\mathbb{E}}\left[\tau B_{i}(\tau)\right].
\]
\end{prop}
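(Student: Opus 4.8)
My plan is to apply the optional stopping theorem to the martingale $M_i(t) := \int_0^t N_i(u)\,du - \tfrac{1}{2D_i}N_i^2(t) + \tfrac{\sigma_i^2}{2D_i^2}N_i(t)$ identified in Lemma~\ref{martingale}, evaluated at the stopping time $\tau$. Since $M_i(0) = 0$, if optional stopping is justified we immediately get $\mathbb{E}[M_i(\tau)] = 0$, i.e.\ $\mathbb{E}[\int_0^\tau N_i(u)\,du] = \tfrac{1}{2D_i}\mathbb{E}[N_i^2(\tau)] - \tfrac{\sigma_i^2}{2D_i^2}\mathbb{E}[N_i(\tau)]$, which is the first claimed equality (note the $N(\tau)$ in the statement should read $N_i(\tau)$). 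For the second equality I would substitute $N_i(t) = D_i t + \sigma_i B_i(t)$: expanding $N_i^2(\tau) = D_i^2\tau^2 + 2D_i\sigma_i\tau B_i(\tau) + \sigma_i^2 B_i(\tau)^2$ and using $\mathbb{E}[N_i(\tau)] = D_i\mathbb{E}[\tau] + \sigma_i\mathbb{E}[B_i(\tau)]$, the terms should collapse. Here I would invoke the basic Wald-type identities for Brownian motion, $\mathbb{E}[B_i(\tau)] = 0$ and $\mathbb{E}[B_i(\tau)^2] = \mathbb{E}[\tau]$ (valid under $\mathbb{E}[\tau] < \infty$), so that $\tfrac{1}{2D_i}\mathbb{E}[N_i^2(\tau)] - \tfrac{\sigma_i^2}{2D_i^2}\mathbb{E}[N_i(\tau)] = \tfrac12 D_i\mathbb{E}[\tau^2] + \sigma_i\mathbb{E}[\tau B_i(\tau)] + \tfrac{\sigma_i^2}{2D_i}\mathbb{E}[B_i(\tau)^2] - \tfrac{\sigma_i^2}{2D_i}\mathbb{E}[\tau]$, and the last two terms cancel.

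\textbf{The main obstacle: justifying optional stopping.} The optional stopping theorem does not apply to an arbitrary unbounded stopping time; one needs a uniform-integrability or truncation argument. My approach would be to first apply optional stopping at the bounded stopping time $\tau \wedge t$, giving $\mathbb{E}[M_i(\tau\wedge t)] = 0$ for every $t$, and then pass to the limit $t\to\infty$. The delicate part is showing $\mathbb{E}[M_i(\tau\wedge t)] \to \mathbb{E}[M_i(\tau)]$, term by term. For $\int_0^{\tau\wedge t} N_i(u)\,du$ one can split into positive and negative parts; monotone convergence handles each once one knows $\mathbb{E}[\int_0^\tau |N_i(u)|\,du] < \infty$, which follows from Cauchy--Schwarz and $\sup_{u\le\tau}|N_i(u)|$ having finite moments controlled via Doob's maximal inequality together with $\mathbb{E}[\tau^2]<\infty$. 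For the $N_i^2(\tau\wedge t)$ term one wants dominated convergence, which needs a uniformly integrable dominating bound; here the hypothesis $\mathbb{E}[\tau^2]<\infty$ is exactly what is used, since $N_i^2(\tau\wedge t) \le 2D_i^2 t^2 + 2\sigma_i^2 \sup_{u\le t}B_i(u)^2$ on $\{\tau \ge t\}$ is too crude, so instead I would bound $\sup_{s\le \tau}N_i^2(s)$ directly: $\sup_{s\le\tau}N_i^2(s) \le 2D_i^2\tau^2 + 2\sigma_i^2\sup_{s\le\tau}B_i(s)^2$, and control $\mathbb{E}[\sup_{s\le\tau}B_i(s)^2]$ by applying Doob's $L^2$ inequality to the martingale $B_i(s\wedge\tau)$ and invoking $\mathbb{E}[B_i(\tau)^2]=\mathbb{E}[\tau]\le (\mathbb{E}[\tau^2])^{1/2}<\infty$; the $\tau B_i(\tau)$ cross term is handled by Cauchy--Schwarz. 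This furnishes an integrable dominating random variable, legitimizing the passage to the limit in all three terms simultaneously.

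\textbf{Remaining bookkeeping.} Once optional stopping is justified, the only remaining work is the elementary algebra of substituting $N_i = D_i t + \sigma_i B_i$ and cancelling, together with a one-line verification that the Wald identities $\mathbb{E}[B_i(\tau)]=0$, $\mathbb{E}[B_i(\tau)^2]=\mathbb{E}[\tau]$ hold under the standing hypothesis (they do, since $\mathbb{E}[\tau^2]<\infty$ implies $\mathbb{E}[\tau]<\infty$, and these are the classical first and second Wald identities for Brownian motion). I would present the argument by first stating the truncated identity, then listing the three convergence claims with their justifications, and finally doing the algebra. I expect the whole proof to be short; essentially everything of substance is hidden in the integrability estimate needed to pass from $\tau\wedge t$ to $\tau$, and that is precisely where the finite-second-moment assumption on $\tau$ is consumed.
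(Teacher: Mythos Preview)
Your proposal is correct and follows the same overall architecture as the paper's proof: apply optional stopping to the martingale of Lemma~\ref{martingale} at the truncated time $\tau\wedge t$, then establish enough uniform integrability to pass to the limit $t\to\infty$. The paper likewise verifies uniform integrability of the three terms $N_i(\tau\wedge t)$, $N_i^2(\tau\wedge t)$, and $\int_0^{\tau\wedge t}N_i(u)\,du$ separately.

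The one noteworthy difference is in how the integral term is controlled. The paper bounds $\int_0^\infty \mathbb{E}[|B_i(u)|\mathbf{1}_{\tau\ge u}]\,du$ directly, applying H\"older's inequality with an exponent $q\in(1,4/3)$ together with Markov's inequality $\mathbb{P}(\tau\ge u)\le \mathbb{E}[\tau^2]/u^2$, so that the integrand behaves like $u^{1/2-2/q}$ and is integrable. Your route via $\mathbb{E}\bigl[\int_0^\tau |N_i(u)|\,du\bigr]\le \mathbb{E}\bigl[\tau\sup_{u\le\tau}|N_i(u)|\bigr]$ followed by Cauchy--Schwarz and Doob's $L^2$ maximal inequality on the stopped Brownian motion is more elementary and avoids the somewhat delicate choice of H\"older exponents; it also yields an integrable dominating variable for all three terms at once. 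Either argument consumes exactly the hypothesis $\mathbb{E}[\tau^2]<\infty$. Your explicit derivation of the second equality via the Wald identities $\mathbb{E}[B_i(\tau)]=0$ and $\mathbb{E}[B_i(\tau)^2]=\mathbb{E}[\tau]$ is a useful addition, as the paper states that identity without spelling out the algebra.
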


\begin{proof}
From Lemma \ref{martingale} and martingale convergence theorem, it
is sufficient to show that

\[
\left\{ \int_{0}^{\tau\wedge t}N_{i}(u)du-\frac{1}{2D_{i}}N_{i}^{2}(\tau\wedge t)+\frac{\sigma_{i}^{2}}{2D_{i}^{2}}N_{i}(\tau\wedge t)\right\} _{t\geq0}
\]
is uniformly integrable. In the following, we will show $\left\{ N_{i}(\tau\wedge t)\right\} _{t\geq0}$,
$\left\{ N_{i}^{2}(\tau\wedge t)\right\} _{t\geq0}$, and $\left\{ \int_{0}^{\tau\wedge t}N_{i}(u)du\right\} _{t\geq0}$
are uniformly integrable, respectively. First, for any $t\geq0$,
$\mathbb{E}[N_{i}(\tau\wedge t)]=D_{i}\mathbb{E}[\tau\wedge t]<D_{i}\mathbb{E}[\tau]<\infty$,
which implies $\left\{ N_{i}(\tau\wedge t)\right\} _{t\geq0}$ is
uniformly integrable. Second, for any $t\geq0$,

\[
\mathbb{E}[N_{i}^{2}(\tau\wedge t)]\leq2D_{i}^{2}\mathbb{E}[(\tau\wedge t)^{2}]+2\sigma_{i}^{2}\mathbb{E}[B_{i}^{2}(\tau\wedge t)]\leq D_{i}^{2}\mathbb{E}[\tau^{2}]+2\sigma_{i}^{2}\mathbb{E}[\tau]\leq\infty,
\]
which implies $\left\{ N_{i}^{2}(\tau\wedge t)\right\} _{t\geq0}$
is uniformly integrable. Third, to show $\left\{ \int_{0}^{\tau\wedge t}N_{i}(u)du\right\} _{t\geq0}$
is uniformly integrable, it is enough to show $\intop_{0}^{\infty}\mathbb{E}\left[\left|B_{i}(u)\right|1_{\tau\geq u}\right]du=\mathbb{E}\left[\intop_{0}^{\tau}\left|B_{i}(u)\right|du\right]<\infty$
from $\left|\int_{0}^{\tau\wedge t}B_{i}(u)du\right|\leq\intop_{0}^{\tau}\left|B_{i}(u)\right|du$
for all $t\geq0$. In fact, using the Hölder's inequality \cite[Theorem 3.1.11]{AL06}
with $q\in(1,4/3)$ and $p$ such that $1/p+1/q=1$, we obtain

\[
\mathbb{E}\left[\left|B_{i}(u)\right|1_{\tau\geq u}\right]\leq\left(\mathbb{E}[\left|B_{i}(u)\right|^{p}]\right)^{1/p}\left(\mathbb{E}[1_{\tau\geq u}^{q}]\right)^{1/q}\leq\left(\sigma^{p}u^{p/2}2^{p/2}\frac{\Gamma(\frac{p+1}{2})}{\sqrt{\pi}}\right)^{1/p}\left(\frac{\mathbb{E}[\tau^{2}]}{u^{2}}\right)^{1/q},
\]
where the second inequality is from $\mathbb{E}[\left|B_{i}(u)\right|^{p}]=\sigma^{p}u^{p/2}2^{p/2}\frac{\Gamma(\frac{p+1}{2})}{\sqrt{\pi}}$
in \cite{winkelbauer2012moments}, and the Markov's inequality $\mathbb{P}(\tau\geq u)\leq\frac{\mathbb{E}[\tau^{2}]}{u^{2}}$.
Thus, recall $q<3/4$, $\intop_{0}^{\infty}u^{\frac{1}{2}-\frac{2}{q}}du<\infty$,
and therefore $\intop_{0}^{\infty}\mathbb{E}\left[\left|B_{i}(u)\right|1_{\tau\geq u}\right]du<\infty$.
In sum, $\left\{ \int_{0}^{\tau\wedge t}N_{i}(u)du-\frac{1}{2D_{i}}N_{i}^{2}(\tau\wedge t)+\frac{\sigma_{i}^{2}}{2D_{i}^{2}}N_{i}(\tau\wedge t)\right\} _{t\geq0}$
is a uniformly integrable martingale, and we arrive at the conclusion.
\end{proof}
Each renewal-type clearing policy corresponds to a clearing cycle
$\tau$. From Proposition \ref{unifiedformula}, for a renewal-type
clearing policy with clearing cycle $\tau$, as long as $\mathrm{\mathbb{E}}\left[\tau^{2}\right]$
and $\mathrm{\mathbb{E}}\left[\tau B_{i}(\tau)\right]$ are obtained,
we can immediately calculate the expected cumulative waiting time
for $i$-th type of items within one clearing cycle. Moreover, the
expected total waiting cost within one clearing cycle can then be
obtained by

\[
\mathbb{E}\left[\sum_{i=1}^{n}\int_{0}^{\tau}\omega_{i}N_{i}(u)du\right]=\frac{1}{2}\sum_{i=1}^{n}\omega_{i}D_{i}\mathrm{\mathbb{E}}\left[\tau^{2}\right]+\sum_{i=1}^{n}\omega_{i}\sigma_{i}\mathrm{\mathbb{E}}\left[\tau B_{i}(\tau)\right].
\]
In the following subsections, we will propose several specific renewal-type
clearing policies, and compute the expected total waiting costs within
one clearing cycle by invoking the above formula.

\subsection{Quantity-based Policy and Time-based Policy\label{subsec:Quantity Policy and Time Policy}}

First, we adopt a quantity-based consolidation policy, which dispatches
a consolidated load when an economical dispatch quantity $Q$ is available.
Since the demands of all items are continuous, the dispatch quantity
is exactly $Q$. Define $T_{Q}=\inf\{t>0:\sum_{i=1}^{n}N_{i}(t)\geq Q\}$,
and clearly, the successive outbound shipping time intervals are independent
identically distributed, and each one has the same distribution as
the random variable $T_{Q}$. We have the following result that characterizes
the statistical property of $T_{Q}$. From \cite[Proposition 3.3]{Harr13},
we have
\begin{lem}
\label{lap-quantity} For $s>0$,
\[
\mathrm{\mathbb{E}}[\exp(-sT_{Q})]=\exp\left(-\frac{\sqrt{D^{2}+2s\sigma^{2}}-D}{\sigma^{2}}Q\right),
\]
\[
\mathrm{\mathbb{E}}[T_{Q}]=\frac{Q}{D},\quad\mathrm{\mathbb{E}}[T_{Q}^{2}]=\frac{Q^{2}}{D^{2}}+\frac{\sigma^{2}Q}{D^{3}}.
\]
In fact, $T_{Q}$ has the inverse Gaussian distribution.
\end{lem}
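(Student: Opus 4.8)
The plan is to reduce the problem to a first-passage time of a single drifted Brownian motion and then apply the optional stopping theorem to the exponential (Wald) martingale. First I would observe that $X(t)\triangleq\sum_{i=1}^{n}N_{i}(t)=Dt+\sum_{i=1}^{n}\sigma_{i}B_{i}(t)$, and since the $B_{i}$ are independent standard Brownian motions, $\sum_{i=1}^{n}\sigma_{i}B_{i}(t)$ is itself a Brownian motion with variance parameter $\sigma^{2}=\sum_{i=1}^{n}\sigma_{i}^{2}$; hence $X$ is a drifted Brownian motion with drift $D$ and diffusion coefficient $\sigma^{2}$, and $T_{Q}=\inf\{t>0:X(t)\geq Q\}$ is its first passage time to level $Q$. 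Because $D>0$, the strong law for Brownian motion gives $X(t)\to\infty$ a.s., so $T_{Q}<\infty$ a.s., and by path-continuity $X(T_{Q})=Q$ on $\{T_{Q}<\infty\}$.

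Next, for $\theta\geq0$ consider $M_{\theta}(t)\triangleq\exp\bigl(\theta X(t)-\psi(\theta)t\bigr)$ with $\psi(\theta)=D\theta+\tfrac{1}{2}\sigma^{2}\theta^{2}$; this is a mean-one $\{\mathcal{G}_{t}\}$-martingale. Stopping at $T_{Q}$, the process $M_{\theta}(t\wedge T_{Q})$ is bounded above by $e^{\theta Q}$ (since $X(t\wedge T_{Q})\leq Q$ and $\theta\geq0$), hence uniformly integrable; letting $t\to\infty$ and invoking bounded convergence together with $X(T_{Q})=Q$ and $T_{Q}<\infty$ a.s. yields $e^{\theta Q}\,\mathbb{E}[e^{-\psi(\theta)T_{Q}}]=1$. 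Given $s>0$, I would then solve $\psi(\theta)=s$ for the nonnegative root $\theta=(\sqrt{D^{2}+2s\sigma^{2}}-D)/\sigma^{2}$ and substitute to obtain $\mathbb{E}[e^{-sT_{Q}}]=\exp\bigl(-\frac{\sqrt{D^{2}+2s\sigma^{2}}-D}{\sigma^{2}}Q\bigr)$.

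Finally, the two moments follow by differentiating the Laplace transform at $s=0$: writing $g(s)=\frac{Q}{\sigma^{2}}(\sqrt{D^{2}+2s\sigma^{2}}-D)$, so that $\mathbb{E}[e^{-sT_{Q}}]=e^{-g(s)}$, one computes $\mathbb{E}[T_{Q}]=g'(0)=Q/D$ and $\mathbb{E}[T_{Q}^{2}]=(g'(0))^{2}-g''(0)=Q^{2}/D^{2}+\sigma^{2}Q/D^{3}$; differentiation under the expectation is legitimate because the Laplace transform is analytic on $(0,\infty)$ and $T_{Q}$ has finite moments of all orders (visible from its explicit density). The inverse Gaussian identification is read off by recognizing the above transform as that of the $\mathrm{IG}$ law with mean $Q/D$ and shape parameter $Q^{2}/\sigma^{2}$.

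The main obstacle is the justification of optional stopping, i.e., confirming that $T_{Q}$ is a.s. finite and that the stopped exponential martingale is uniformly integrable so that $\mathbb{E}[M_{\theta}(T_{Q})]=M_{\theta}(0)=1$; once $\theta\geq0$ is fixed, however, this is immediate from the deterministic bound $M_{\theta}(t\wedge T_{Q})\leq e^{\theta Q}$, so no delicate estimates are required. Alternatively, the entire statement is classical and may simply be cited from \cite[Proposition 3.3]{Harr13}, which is the route we take.
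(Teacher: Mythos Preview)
Your proposal is correct and, in fact, more detailed than the paper: the paper does not prove this lemma at all but simply cites \cite[Proposition~3.3]{Harr13}, exactly as you note in your final sentence. Your exponential-martingale derivation is the standard one and all steps (the reduction to a one-dimensional drifted Brownian motion, the bound $M_{\theta}(t\wedge T_{Q})\le e^{\theta Q}$ guaranteeing uniform integrability, and the moment computations via differentiation of the Laplace transform) are sound.
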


The next result gives joint moment generation function for $(B_{i}(T_{Q}),T_{Q})$.
\begin{lem}
\label{joint-quantity} For $s_{1}^{2}+2s_{2}<0$, and any $i=1,2,\ldots,n$,
\begin{eqnarray*}
 &  & \mathrm{\mathbb{\mathrm{\mathbb{E}}}}[\exp(s_{1}B_{i}(T_{Q})+s_{2}T_{Q})]\\
= &  & \exp\left(\frac{s_{1}\sigma_{i}+D-\sqrt{(s_{1}\sigma_{i}+D)^{2}-(s_{1}^{2}+2s_{2})\sigma^{2}}}{\sigma^{2}}Q\right),
\end{eqnarray*}
\[
\mathrm{\mathbb{E}}[B_{i}(T_{Q})T_{Q}]=-\frac{\sigma_{i}Q}{D^{2}}.
\]
\end{lem}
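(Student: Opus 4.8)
The plan is to reuse the exponential-martingale and optional-stopping strategy behind Lemma~\ref{martingale} and Proposition~\ref{unifiedformula}. Write $S(t):=\sum_{j=1}^{n}N_{j}(t)=Dt+\sigma W(t)$, where $W(t):=\sigma^{-1}\sum_{j=1}^{n}\sigma_{j}B_{j}(t)$ is a standard Brownian motion (continuous paths, and $\mathrm{Var}(W(t))=\sigma^{-2}\sum_{j}\sigma_{j}^{2}t=t$), so that $T_{Q}=\inf\{t>0:S(t)\ge Q\}$ and, since $S$ is continuous, $S(T_{Q})=Q$ exactly. Fix $s_{1},s_{2}$ with $s_{1}^{2}+2s_{2}<0$; for a real $\theta$ set $\psi(\theta):=\theta D+\tfrac12(s_{1}^{2}+\sigma^{2}\theta^{2}+2s_{1}\sigma_{i}\theta)$ and $M_{\theta}(t):=\exp(s_{1}B_{i}(t)+\theta S(t)-\psi(\theta)t)$. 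Because the increment $(B_{i}(t)-B_{i}(s),S(t)-S(s))$ is independent of $\mathcal{G}_{s}$, bivariate Gaussian, distributed as $(B_{i}(t-s),S(t-s))$, with $\mathrm{Cov}(B_{i}(u),S(u))=\sigma_{i}u$, evaluating the Gaussian moment generating function gives $\mathbb{E}[\exp(s_{1}(B_{i}(t)-B_{i}(s))+\theta(S(t)-S(s)))\mid\mathcal{G}_{s}]=\exp(\psi(\theta)(t-s))$, so $M_{\theta}$ is a $\{\mathcal{G}_{t}\}$-martingale, exactly as in Lemma~\ref{martingale}.

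Next I would take $\theta=\theta^{*}$, the root of $\psi(\theta)=-s_{2}$ with the plus sign:
\[
\theta^{*}=\frac{-(D+s_{1}\sigma_{i})+\sqrt{(D+s_{1}\sigma_{i})^{2}-(s_{1}^{2}+2s_{2})\sigma^{2}}}{\sigma^{2}}.
\]
The radicand is positive since $s_{1}^{2}+2s_{2}<0$, and the same inequality forces $\theta^{*}>0$. Then $M_{\theta^{*}}(t\wedge T_{Q})=\exp(s_{1}B_{i}(t\wedge T_{Q})+\theta^{*}S(t\wedge T_{Q})+s_{2}(t\wedge T_{Q}))$ with $S(t\wedge T_{Q})\le Q$ and $T_{Q}<\infty$ a.s. (as $D>0$). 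Optional stopping then yields $\mathbb{E}[M_{\theta^{*}}(T_{Q})]=1$, i.e. $e^{\theta^{*}Q}\mathbb{E}[\exp(s_{1}B_{i}(T_{Q})+s_{2}T_{Q})]=1$, which is precisely $\mathbb{E}[\exp(s_{1}B_{i}(T_{Q})+s_{2}T_{Q})]=e^{-\theta^{*}Q}$, the claimed identity.

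The main obstacle is justifying the optional stopping, because the stopped martingale is not manifestly uniformly integrable: on $\{t<T_{Q}\}$ the bound $S(t)<Q$ constrains only a correlated combination of $B_{i}(t)$ and $W(t)$, not $s_{1}B_{i}(t\wedge T_{Q})$ itself. I would obtain uniform integrability from an $L^{p}$ bound valid for any $p\in(1,-2s_{2}/s_{1}^{2})$ (a nonempty interval exactly because $s_{1}^{2}+2s_{2}<0$; the case $s_{1}=0$ is trivial, since then $M_{\theta^{*}}(t\wedge T_{Q})\le e^{\theta^{*}Q}$): for such $p$ one has $ps_{2}\le-\tfrac12p^{2}s_{1}^{2}$, hence $M_{\theta^{*}}(t\wedge T_{Q})^{p}\le e^{p\theta^{*}Q}\exp(ps_{1}B_{i}(t\wedge T_{Q})-\tfrac12p^{2}s_{1}^{2}(t\wedge T_{Q}))$, where the last factor is a stopped nonnegative martingale of mean $1$, so $\sup_{t}\mathbb{E}[M_{\theta^{*}}(t\wedge T_{Q})^{p}]\le e^{p\theta^{*}Q}<\infty$. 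A cleaner alternative that sidesteps this estimate is to reduce to Lemma~\ref{lap-quantity}: decompose $B_{i}(t)=(\sigma_{i}/\sigma)W(t)+\sqrt{1-\sigma_{i}^{2}/\sigma^{2}}\,\widetilde{B}(t)$ with $\widetilde{B}$ a standard Brownian motion independent of $W$ (hence of $T_{Q}$), condition on the path of $W$, and use $W(T_{Q})=(Q-DT_{Q})/\sigma$ to get $\mathbb{E}[\exp(s_{1}B_{i}(T_{Q})+s_{2}T_{Q})]=e^{s_{1}\sigma_{i}Q/\sigma^{2}}\,\mathbb{E}[\exp(-\lambda T_{Q})]$ with $\lambda=s_{1}\sigma_{i}D/\sigma^{2}-s_{2}-\tfrac12s_{1}^{2}(1-\sigma_{i}^{2}/\sigma^{2})$; since $D^{2}+2\lambda\sigma^{2}=(D+s_{1}\sigma_{i})^{2}-(s_{1}^{2}+2s_{2})\sigma^{2}>0$, the inverse-Gaussian Laplace transform of Lemma~\ref{lap-quantity} applies (by analyticity in this range) and yields the same formula. (Equivalently, a Girsanov change of measure with density $\exp(s_{1}B_{i}(t)-\tfrac12s_{1}^{2}t)$ turns $S$ into a Brownian motion of drift $D+s_{1}\sigma_{i}$ and reduces the claim directly to Lemma~\ref{lap-quantity} with parameter $-(s_{1}^{2}/2+s_{2})>0$.)

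For the second identity I would differentiate the joint moment generating function at the origin. Writing the right-hand side as $\exp(g(s_{1},s_{2})Q)$ with $g(s_{1},s_{2})=\sigma^{-2}(s_{1}\sigma_{i}+D-\sqrt{(s_{1}\sigma_{i}+D)^{2}-(s_{1}^{2}+2s_{2})\sigma^{2}})$, one checks $g(0,0)=0$ and $\partial_{s_{1}}g(0,0)=0$, so
\[
\mathbb{E}[B_{i}(T_{Q})T_{Q}]=\partial_{s_{1}}\partial_{s_{2}}e^{g(s_{1},s_{2})Q}\big|_{(0,0)}=Q\,\partial_{s_{1}}\partial_{s_{2}}g(0,0)=-\frac{\sigma_{i}Q}{D^{2}},
\]
the last equality being a short computation differentiating under the square root (whose radicand equals $D^{2}$ at the origin). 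Differentiating under the expectation is legitimate here because $T_{Q}$ is inverse Gaussian, so the moment generating function is finite and smooth on a neighbourhood of the origin in $\{s_{1}^{2}+2s_{2}<0\}$.
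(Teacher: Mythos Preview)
Your primary argument is essentially the paper's: the martingale $M_{\theta^{*}}(t)=\exp(s_{1}B_{i}(t)+\theta^{*}S(t)+s_{2}t)$ coincides with the paper's $\exp\bigl(\sum_{j}a_{j}B_{j}(t)-\tfrac12\sum_{j}a_{j}^{2}t\bigr)$ (the algebraic relations among the $a_{j}$ give exactly $\theta^{*}=a_{n}/\sigma_{n}$ and the same rewriting in terms of $S(t)$), and both proofs conclude via optional stopping and $S(T_{Q})=Q$. Your uniform-integrability step is a bit cleaner than the paper's: instead of applying H\"older's inequality to split the $L^{1+\delta}$ norm into two factors, you bound $\theta^{*}S(t\wedge T_{Q})\le\theta^{*}Q$ directly in the exponent and then dominate by the mean-one exponential martingale $\exp(ps_{1}B_{i}-\tfrac12 p^{2}s_{1}^{2}t)$; this saves a step and makes the role of the hypothesis $s_{1}^{2}+2s_{2}<0$ (equivalently, the existence of $p>1$ with $ps_{2}\le-\tfrac12 p^{2}s_{1}^{2}$) transparent. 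Your two alternatives---the orthogonal decomposition $B_{i}=(\sigma_{i}/\sigma)W+\sqrt{1-\sigma_{i}^{2}/\sigma^{2}}\,\widetilde B$ reducing everything to the inverse-Gaussian transform of Lemma~\ref{lap-quantity}, and the Girsanov tilt---are genuinely different routes not taken in the paper; they trade the optional-stopping/UI verification for a one-line conditional-Gaussian computation (or change of measure), at the modest cost of invoking the analytic extension of the Laplace transform in Lemma~\ref{lap-quantity} to arguments with $D^{2}+2\lambda\sigma^{2}>0$ rather than just $\lambda>0$. The differentiation at $(0,0)$ is handled the same way as in the paper, and your observation that the formula extends analytically to a full neighbourhood of the origin is what legitimises evaluating the mixed partial there.
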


\begin{proof}
The proof is in the Appendix.
\end{proof}
The next result gives the expected cumulative waiting time for $i$-th
type of items within one clearing cycle under the quantity-based policy.
\begin{prop}
\label{waitingtime-quantity} Under the quantity-based policy with
parameter $Q$, the expected cumulative waiting time for $i$-th type
of items within one clearing cycle is $\frac{D_{i}Q^{2}}{2D^{2}}+\frac{D_{i}\sigma^{2}Q}{2D^{3}}-\frac{\sigma_{i}^{2}Q}{D^{2}}$.
\end{prop}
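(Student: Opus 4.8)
The plan is to apply Proposition~\ref{unifiedformula} directly with $\tau = T_Q$. The formula there gives
\[
\mathbb{E}\!\left[\int_0^{T_Q} N_i(u)\,du\right] = \tfrac12 D_i\,\mathbb{E}[T_Q^2] + \sigma_i\,\mathbb{E}[T_Q B_i(T_Q)],
\]
so everything reduces to plugging in the two moments that have already been computed. Before doing so I would note that the hypothesis of Proposition~\ref{unifiedformula} requires $\mathbb{E}[T_Q^2]<\infty$, which is supplied by Lemma~\ref{lap-quantity} (indeed $T_Q$ is inverse Gaussian and has finite moments of all orders), so the proposition is legitimately applicable.

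Next I would substitute. From Lemma~\ref{lap-quantity}, $\mathbb{E}[T_Q^2] = \frac{Q^2}{D^2} + \frac{\sigma^2 Q}{D^3}$, and from Lemma~\ref{joint-quantity}, $\mathbb{E}[B_i(T_Q)T_Q] = -\frac{\sigma_i Q}{D^2}$. Therefore
\[
\mathbb{E}\!\left[\int_0^{T_Q} N_i(u)\,du\right] = \frac12 D_i\!\left(\frac{Q^2}{D^2} + \frac{\sigma^2 Q}{D^3}\right) + \sigma_i\!\left(-\frac{\sigma_i Q}{D^2}\right) = \frac{D_i Q^2}{2D^2} + \frac{D_i\sigma^2 Q}{2D^3} - \frac{\sigma_i^2 Q}{D^2},
\]
which is exactly the claimed expression. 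That is the whole argument.

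There is really no substantial obstacle here: the proposition is a one-line consequence of results already established. The only things worth being careful about are (i) checking the integrability hypothesis of Proposition~\ref{unifiedformula}, which as noted is immediate since $T_Q$ is inverse Gaussian, and (ii) not confusing the two equivalent forms of the formula in Proposition~\ref{unifiedformula} — one in terms of $\mathbb{E}[N_i^2(\tau)]$ and $\mathbb{E}[N(\tau)]$, the other in terms of $\mathbb{E}[\tau^2]$ and $\mathbb{E}[\tau B_i(\tau)]$ — and using whichever is most convenient (the latter, since those moments are exactly what Lemmas~\ref{lap-quantity} and~\ref{joint-quantity} provide). So my proof would consist of: invoke Proposition~\ref{unifiedformula}, cite Lemmas~\ref{lap-quantity} and~\ref{joint-quantity} for the required moments, and combine.
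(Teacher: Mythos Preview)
Your proposal is correct and follows essentially the same approach as the paper: invoke Proposition~\ref{unifiedformula} with $\tau=T_Q$, then substitute the moments from Lemmas~\ref{lap-quantity} and~\ref{joint-quantity}. If anything, your version is slightly more careful in explicitly verifying the finite-second-moment hypothesis before applying Proposition~\ref{unifiedformula}.
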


\begin{proof}
Under the quantity-based policy with parameter $Q$, from Proposition
\ref{unifiedformula}, the expected cumulative waiting time for $i$-th
type of items within one clearing cycle can be calculated by

\[
\mathrm{\mathbb{E}}\left[\int_{0}^{T_{Q}}N_{i}(t)dt\right]=\frac{1}{2}D_{i}\mathrm{\mathbb{E}}\left[T_{Q}^{2}\right]+\sigma_{i}\mathrm{\mathbb{E}}\left[T_{Q}B_{i}(T_{Q})\right].
\]
From Lemma \ref{lap-quantity} and Lemma \ref{joint-quantity}, we
obtain\\

\[
\mathrm{\mathbb{E}}\left[\int_{0}^{T_{Q}}N_{i}(t)dt\right]=\frac{D_{i}Q^{2}}{2D^{2}}+\frac{D_{i}\sigma^{2}Q}{2D^{3}}-\frac{\sigma_{i}^{2}Q}{D^{2}}.
\]
\end{proof}
\begin{rem}
\label{Poissonasaspecialcase}Suppose the demand of the $i$-type
of items $N_{i}(t)$ is a Poisson process with rate $\lambda_{i}$,
$i=1,2,\ldots,n$, the expected cumulative waiting time for the $i$-type
of items within a consolidation cycle under the quantity policy with
parameter $Q$ is
\[
\mathrm{\mathbb{E}}[\int_{0}^{T_{Q}}N_{i}(t)dt]=\mathrm{\mathbb{E}}[tN_{i}(t)|_{t=0}^{T_{Q}}]-\mathrm{\mathbb{E}}[\int_{0}^{T_{Q}}tdN_{i}(t)]=\mathrm{\mathbb{E}}[T_{Q}N_{i}(T_{Q})]-\mathrm{\mathbb{E}}[\int_{0}^{T_{Q}}tdN_{i}(t)].
\]
Clearly, the total demand $N(t)$ is a Poisson process with rate $\lambda=\sum_{i=1}^{n}\lambda_{i}$,
and $T_{Q}$ is a random variable having gamma(Q,$\lambda$) distribution,
which has mean $\frac{Q}{\lambda}$ and variance $\frac{Q}{\lambda^{2}}$.
Notice that $\mathrm{\mathbb{E}}[T_{Q}N_{i}(T_{Q})]=\mathrm{\mathbb{E}}[T_{Q}\mathrm{\mathbb{E}}[N_{i}(T_{Q})|T_{Q}]]=\frac{\lambda_{i}}{\lambda}Q\mathrm{\mathbb{E}}[T_{Q}]=\frac{\lambda_{i}Q^{2}}{\lambda^{2}}$.
Further,$\int_{0}^{t}sdN_{i}(s)-\int_{0}^{t}\lambda_{i}sds$ is a
square integrable martingale if $N_{i}(t)$ is a Poisson process,
then by the martingale stopping theorem, we have that $\mathrm{\mathbb{E}}[\int_{0}^{T_{Q}}tdN_{i}(t)]=\frac{1}{2}\lambda_{i}\mathrm{\mathbb{E}}[T_{Q}^{2}]=\frac{1}{2}\lambda_{i}(\frac{Q}{\lambda^{2}}+\frac{Q^{2}}{\lambda^{2}})$.
Therefore, the cumulative waiting time for the $i-th$ item within
a consolidation cycle is $\mathrm{\mathbb{E}}[\int_{0}^{T_{Q}}N_{i}(t)dt]=\frac{\lambda_{i}(Q-1)Q}{2\lambda^{2}}$.

By approximating the Poisson processes $N_{i}(t)$ ($i=1,2,\ldots,n$)
by drifted Brownian motion with $D_{i}=\sigma_{i}^{2}=\lambda_{i}$,
from Proposition \ref{waitingtime-quantity}, the expected cumulative
waiting time for $i$-type of items within one clearing cycle is $\frac{D_{i}Q^{2}}{2D^{2}}+\frac{D_{i}\sigma^{2}Q}{2D^{3}}-\frac{\sigma_{i}^{2}Q}{D^{2}}=\frac{\lambda_{i}(Q-1)Q}{2\lambda^{2}}$,
which is exactly the same result as above.
\end{rem}

Under the quantity-based policy with parameter $Q$, the expected
shipping cost within one clearing cycle is $A_{D}+\mathrm{\mathbb{E}}[\sum_{i=1}^{n}c_{i}N_{i}(T_{Q})]=A_{D}+\frac{Q}{D}\sum_{i=1}^{n}c_{i}D_{i}$,
the expected total waiting cost within one clearing cycle is $\sum_{i=1}^{n}\omega_{i}(\frac{D_{i}Q^{2}}{2D^{2}}+\frac{D_{i}\sigma^{2}Q}{2D^{3}}-\frac{\sigma_{i}^{2}Q}{D^{2}})$,
and the expected clearing cycle length is $\mathrm{\mathbb{E}}[T_{Q}]=\frac{Q}{D}$.
Therefore, we have the average cost by the renewal reward theorem,

\[
\begin{array}{rcl}
AC^{QP}(Q) & = & \frac{A_{D}+\frac{Q}{D}\sum_{i=1}^{n}c_{i}D_{i}+\sum_{i=1}^{n}\omega_{i}(\frac{D_{i}Q^{2}}{2D^{2}}+\frac{D_{i}\sigma^{2}Q}{2D^{3}}-\frac{\sigma_{i}^{2}Q}{D^{2}})}{Q/D}\\
 & = & \frac{A_{D}D}{Q}+\frac{Q}{2D}\sum_{i=1}^{n}\omega_{i}D_{i}+\sum_{i=1}^{n}c_{i}D_{i}-\sum_{i=1}^{n}\omega_{i}(\frac{\sigma_{i}^{2}}{D}-\frac{D_{i}\sigma^{2}}{2D^{2}}).
\end{array}
\]
We obtain the optimal quantity

\[
Q^{*}=\sqrt{\frac{2A_{D}}{\sum_{i=1}^{n}\omega_{i}D_{i}}}D
\]
and the associated average cost
\begin{equation}
AC^{QP}(Q^{*})=\sqrt{2A_{D}\sum_{i=1}^{n}\omega_{i}D_{i}}+\sum_{i=1}^{n}c_{i}D_{i}-\sum_{i=1}^{n}\omega_{i}(\frac{\sigma_{i}^{2}}{D}-\frac{D_{i}\sigma^{2}}{2D^{2}}).\label{minimizedACunderQP}
\end{equation}

Next, we adopt a time based policy, which clears the system every
$T$ units time. The expected shipping cost within one clearing cycle
is $A_{D}+\mathrm{\mathbb{E}}[\sum_{i=1}^{n}c_{i}N_{i}(T)]=A_{D}+\sum_{i=1}^{n}c_{i}D_{i}T$
and the expected total waiting cost within one clearing cycle is $\mathrm{\mathbb{E}}[\int_{0}^{T}N_{i}(t)dt]=\int_{0}^{T}D_{i}tdt=\frac{1}{2}D_{i}T^{2}$.
By the renewal reward theorem, the long-run average cost per unit-time
is
\[
AC^{TP}(T)=\frac{A_{D}+\sum_{i=1}^{n}c_{i}D_{i}T+\frac{1}{2}\sum_{i=1}^{n}\omega_{i}D_{i}T^{2}}{T}=\frac{A_{D}}{T}+\frac{1}{2}\sum_{i=1}^{n}\omega_{i}D_{i}T+\sum_{i=1}^{n}c_{i}D_{i}.
\]
We obtain the optimal time parameter $T^{*}=\sqrt{\frac{2A_{D}}{\sum_{i=1}^{n}\omega_{i}D_{i}}}$,
and the associated average cost
\begin{equation}
AC^{TP}(T^{*})=\sqrt{2A_{D}\sum_{i=1}^{n}\omega_{i}D_{i}}+\sum_{i=1}^{n}c_{i}D_{i}.\label{minimizedACunderTP}
\end{equation}

Based on (\ref{minimizedACunderQP}) and (\ref{minimizedACunderTP}),
we have the following result.
\begin{thm}
\label{QPvsTP}If $\sum_{i=1}^{n}\omega_{i}(2D\sigma_{i}^{2}-D_{i}\sigma^{2})>0$,
the optimal quantity policy achieves less average cost than the optimal
time policy; If $\sum_{i=1}^{n}\omega_{i}(2D\sigma_{i}^{2}-D_{i}\sigma^{2})<0$,
the optimal time policy achieves less average cost than the optimal
quantity policy.
\end{thm}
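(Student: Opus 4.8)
The plan is simply to subtract the two optimized average costs, equations~(\ref{minimizedACunderQP}) and~(\ref{minimizedACunderTP}), and read off the sign of the difference; the substantive work has already been done in deriving those closed forms.

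First I would record why $Q^{*}$ and $T^{*}$ really are optimizers. Both $AC^{QP}(Q)$ and $AC^{TP}(T)$ have the form $a/x+bx+c$ with $a>0$ and $b=\tfrac{1}{2}\sum_{i=1}^{n}\omega_{i}D_{i}$ (up to the factor $D$ in the quantity case) also positive, so $a/x+bx+c$ is strictly convex on $(0,\infty)$ with unique global minimizer $x^{*}=\sqrt{a/b}$; this confirms that the expressions in~(\ref{minimizedACunderQP}) and~(\ref{minimizedACunderTP}) are indeed the minimal average costs. (One should also note that Proposition~\ref{unifiedformula} was legitimately applied to $T_{Q}$, since Lemma~\ref{lap-quantity} gives $\mathbb{E}[T_{Q}^{2}]=Q^{2}/D^{2}+\sigma^{2}Q/D^{3}<\infty$, and to the deterministic cycle $\tau\equiv T$, whose second moment is trivially finite.)

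Then I would form the difference. The terms $\sqrt{2A_{D}\sum_{i=1}^{n}\omega_{i}D_{i}}$ and $\sum_{i=1}^{n}c_{i}D_{i}$ appear in both~(\ref{minimizedACunderQP}) and~(\ref{minimizedACunderTP}) and cancel, leaving
\[
AC^{QP}(Q^{*})-AC^{TP}(T^{*})=-\sum_{i=1}^{n}\omega_{i}\Bigl(\frac{\sigma_{i}^{2}}{D}-\frac{D_{i}\sigma^{2}}{2D^{2}}\Bigr)=-\frac{1}{2D^{2}}\sum_{i=1}^{n}\omega_{i}\bigl(2D\sigma_{i}^{2}-D_{i}\sigma^{2}\bigr).
\]
Since $\tfrac{1}{2D^{2}}>0$, the sign of $AC^{TP}(T^{*})-AC^{QP}(Q^{*})$ equals the sign of $\sum_{i=1}^{n}\omega_{i}(2D\sigma_{i}^{2}-D_{i}\sigma^{2})$: if this sum is positive the optimal quantity policy is strictly cheaper, and if it is negative the optimal time policy is strictly cheaper, which is exactly the assertion. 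There is essentially no obstacle beyond this bookkeeping; the only thing worth flagging is the boundary case $\sum_{i=1}^{n}\omega_{i}(2D\sigma_{i}^{2}-D_{i}\sigma^{2})=0$, in which the two optimal policies have identical average cost and which the theorem statement simply omits.
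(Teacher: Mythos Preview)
Your proposal is correct and follows exactly the approach the paper takes: the paper simply states that the result follows from~(\ref{minimizedACunderQP}) and~(\ref{minimizedACunderTP}), and your subtraction of those two expressions is the intended argument spelled out in full. If anything, your version is more thorough than the paper's, since you also verify that $Q^{*}$ and $T^{*}$ are genuine minimizers and note the applicability of Proposition~\ref{unifiedformula}.
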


\begin{rem}
Mutlu et al.(2010) shows that in the single item Poisson demand case,
the optimal quantity policy achieves the lowest average cost. Theorem
\ref{QPvsTP} points out it may not be true that the optimal quantity
policy always achieves less average cost than the optimal time policy,
in a stochastic clearing system with multiple drifted Brownian motion
inputs.

(i) We consider the single input process case, i.e., $n=1$, then
$D=D_{1},$ $\sigma^{2}=\sigma_{1}^{2}$ and $\sum_{i=1}^{n}\omega_{i}(2D\sigma_{i}^{2}-D_{i}\sigma^{2})>0$.
From Theorem \ref{QPvsTP}, the optimal quantity policy always achieves
less average cost than the optimal time policy.

(ii) We consider the case with multiple independent Poisson processes
with rates $\lambda_{i}$ ($i=1,2,\ldots,n$). From Remark \ref{Poissonasaspecialcase},
we obtain exact cost term using $D_{i}=\sigma_{i}^{2}=\lambda_{i}$
in Brownian model. Notice that $\sum_{i=1}^{n}\omega_{i}(2D\sigma_{i}^{2}-D_{i}\sigma^{2})=D\sum_{i=1}^{n}\omega_{i}D_{i}>0$,
which implies that in a stochastic clearing system with multiple Poisson
inputs, the optimal quantity policy always achieves less average cost
than the optimal time policy.
\end{rem}

\subsection{$(T_{Q}+T)$-Policy\label{subsec:T_Q+T Policy}}

Given a quantity-based consolidation policy with parameter $Q$, we
consider a modified policy, denoted as $(T_{Q}+T)$-policy which dispatches
the consolidated load at a nonnegative time $T$ later than it takes
to accumulate $Q$. This type of policy also appears in \cite{KPS03,LM00}.
Obviously, quantity policy and time policy can be treated as two special
types of policies in the set of $(T_{Q}+T)$ type policies. The goal
of this subsection is to show that within the set of $(T_{Q}+T)$
type policies, either the optimal quantity policy or the optimal time
policy is optimal, depending on whether $\sum_{i=1}^{n}\omega_{i}(2D\sigma_{i}^{2}-D_{i}\sigma^{2})$
is positive or negative. We obtain the expected cumulative waiting
time for $i$-type of items within one clearing cycle under the $(T_{Q}+T)$-policy.
\begin{prop}
Under the $(T_{Q}+T)$-policy, the expected cumulative waiting time
for $i$-type of items within one clearing cycle is $\frac{D_{i}Q^{2}}{2D^{2}}+\frac{D_{i}\sigma^{2}Q}{2D^{3}}-\frac{\sigma_{i}^{2}Q}{D^{2}}+\frac{D_{i}Q}{D}T+\frac{1}{2}D_{i}T^{2}$.
\end{prop}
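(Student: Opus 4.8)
The plan is to invoke Proposition \ref{unifiedformula} with the clearing cycle $\tau = T_Q + T$, where $T\ge 0$ is deterministic, so that the whole computation reduces to finding $\mathbb{E}[\tau^2]$ and $\mathbb{E}[\tau B_i(\tau)]$. First I would check the hypothesis of Proposition \ref{unifiedformula}: since $T$ is constant and $T_Q$ is inverse Gaussian with $\mathbb{E}[T_Q^2] = Q^2/D^2 + \sigma^2 Q/D^3 < \infty$ by Lemma \ref{lap-quantity}, we get $\mathbb{E}[\tau^2] < \infty$. Expanding the square, $\mathbb{E}[\tau^2] = \mathbb{E}[T_Q^2] + 2T\,\mathbb{E}[T_Q] + T^2 = Q^2/D^2 + \sigma^2 Q/D^3 + 2TQ/D + T^2$, again directly from Lemma \ref{lap-quantity}.

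The one step requiring care is $\mathbb{E}[\tau B_i(\tau)] = \mathbb{E}[(T_Q+T)B_i(T_Q+T)]$. I would split the Brownian path at the stopping time $T_Q$, writing $B_i(T_Q+T) = B_i(T_Q) + \Delta_i$ with $\Delta_i := B_i(T_Q+T) - B_i(T_Q)$, and use the strong Markov property: conditionally on $\mathcal{G}_{T_Q}$, the increment $\Delta_i$ is $N(0,T)$ and independent of $\mathcal{G}_{T_Q}$. Since $T_Q+T$ is $\mathcal{G}_{T_Q}$-measurable, the expansion $(T_Q+T)B_i(T_Q+T) = (T_Q+T)B_i(T_Q) + (T_Q+T)\Delta_i$ gives $\mathbb{E}[(T_Q+T)\Delta_i] = \mathbb{E}[T_Q+T]\cdot\mathbb{E}[\Delta_i] = 0$, while $\mathbb{E}[(T_Q+T)B_i(T_Q)] = \mathbb{E}[T_Q B_i(T_Q)] + T\,\mathbb{E}[B_i(T_Q)]$. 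Here $\mathbb{E}[T_Q B_i(T_Q)] = -\sigma_i Q/D^2$ by Lemma \ref{joint-quantity}, and $\mathbb{E}[B_i(T_Q)] = 0$ by optional stopping applied to the martingale $\{B_i(t)\}$, legitimate because $\{B_i(T_Q\wedge t)\}_{t\geq 0}$ is bounded in $L^2$ (using $\mathbb{E}[B_i(T_Q\wedge t)^2] = \mathbb{E}[T_Q\wedge t] \le \mathbb{E}[T_Q] < \infty$), hence uniformly integrable. Thus $\mathbb{E}[\tau B_i(\tau)] = -\sigma_i Q/D^2$.

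Substituting into $\mathbb{E}\bigl[\int_0^\tau N_i(u)\,du\bigr] = \tfrac12 D_i\,\mathbb{E}[\tau^2] + \sigma_i\,\mathbb{E}[\tau B_i(\tau)]$ then yields
\[
\tfrac12 D_i\Bigl(\tfrac{Q^2}{D^2} + \tfrac{\sigma^2 Q}{D^3} + \tfrac{2TQ}{D} + T^2\Bigr) - \tfrac{\sigma_i^2 Q}{D^2} = \tfrac{D_iQ^2}{2D^2} + \tfrac{D_i\sigma^2 Q}{2D^3} - \tfrac{\sigma_i^2 Q}{D^2} + \tfrac{D_iQ}{D}T + \tfrac12 D_i T^2,
\]
which is exactly the claimed formula.

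I expect the main obstacle to be the bookkeeping around the strong Markov property at $T_Q$: one must be precise that $T_Q$ is a stopping time for the natural filtration $\{\mathcal{G}_t\}$ generated by all the $B_j$'s simultaneously and that $T$ is deterministic, so that $(T_Q+T)$ is genuinely $\mathcal{G}_{T_Q}$-measurable and independent of the post-$T_Q$ increment $\Delta_i$; the rest is routine arithmetic. An alternative route I would mention is to decompose the integral itself as $\int_0^{T_Q}N_i(u)\,du + \int_{T_Q}^{T_Q+T}N_i(u)\,du$, evaluate the first term by Proposition \ref{waitingtime-quantity}, and for the second write $\int_{T_Q}^{T_Q+T}N_i(u)\,du = T N_i(T_Q) + \int_0^T\!\bigl(N_i(T_Q+s)-N_i(T_Q)\bigr)ds$, using the strong Markov property to get $\mathbb{E}[T N_i(T_Q)] = T D_i Q/D$ (with $\mathbb{E}[N_i(T_Q)] = D_i Q/D$ from optional stopping on $N_i(t) - D_i t$) and $\mathbb{E}\bigl[\int_0^T(\cdots)\,ds\bigr] = \tfrac12 D_i T^2$; this produces the same answer and is arguably the more transparent presentation.
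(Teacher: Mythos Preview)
Your proof is correct and follows essentially the same route as the paper: apply Proposition~\ref{unifiedformula} to $\tau=T_Q+T$, then use Lemmas~\ref{lap-quantity} and~\ref{joint-quantity} to evaluate $\mathbb{E}[(T_Q+T)^2]$ and reduce $\mathbb{E}[(T_Q+T)B_i(T_Q+T)]$ to $\mathbb{E}[T_Q B_i(T_Q)]$. The paper simply asserts this last reduction in one line, whereas you spell out the strong Markov justification and the optional-stopping argument for $\mathbb{E}[B_i(T_Q)]=0$; your alternative integral-splitting route is not used in the paper but is a valid equivalent.
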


\begin{proof}
Under the $(T_{Q}+T)$-policy, from Proposition \ref{unifiedformula},
the expected cumulative waiting time for $i$-th type of items within
one clearing cycle can be calculated by

\[
\mathrm{\mathbb{E}}\left[\int_{0}^{T_{Q}+T}N_{i}(t)dt\right]=\frac{1}{2}D_{i}\mathrm{\mathbb{E}}\left[(T_{Q}+T)^{2}\right]+\sigma_{i}\mathrm{\mathbb{E}}\left[(T_{Q}+T)B_{i}(T_{Q}+T)\right]=\frac{1}{2}D_{i}\mathrm{\mathbb{E}}\left[(T_{Q}+T)^{2}\right]+\sigma_{i}\mathrm{\mathbb{E}}\left[T_{Q}B_{i}(T_{Q})\right].
\]
From Lemma \ref{lap-quantity} and Lemma \ref{joint-quantity}, we
obtain\\

\[
\mathrm{\mathbb{E}}\left[\int_{0}^{T_{Q}+T}N_{i}(t)dt\right]=\frac{D_{i}Q^{2}}{2D^{2}}+\frac{D_{i}\sigma^{2}Q}{2D^{3}}-\frac{\sigma_{i}^{2}Q}{D^{2}}+\frac{D_{i}Q}{D}T+\frac{1}{2}D_{i}T^{2}.
\]
\end{proof}
Under the $(T_{Q}+T)$-policy with parameters $Q$ and $T$, the expected
shipping cost within one clearing cycle is $A_{D}+(\frac{Q}{D}+T)\sum_{i=1}^{n}c_{i}D_{i}$,
the expected total waiting cost within one clearing cycle is $\sum_{i=1}^{n}\omega_{i}(\frac{D_{i}Q^{2}}{2D^{2}}+\frac{D_{i}\sigma^{2}Q}{2D^{3}}-\frac{\sigma_{i}^{2}Q}{D^{2}}+\frac{D_{i}Q}{D}T+\frac{1}{2}D_{i}T^{2})$,
and the expected clearing cycle length is $\mathbb{\mathrm{\mathbb{E}}}[T_{Q}+T]=\frac{Q}{D}+T$.
Therefore, we have the average cost by the renewal reward theorem,

\[
AC^{QTP}(Q,T)=\frac{Q+DT}{2D}\sum_{i=1}^{n}\omega_{i}D_{i}+\sum_{i=1}^{n}c_{i}D_{i}+\frac{A_{D}D-\sum_{i=1}^{n}\omega_{i}(\frac{\sigma_{i}^{2}}{D}-\frac{D_{i}\sigma^{2}}{2D^{2}})Q}{Q+DT}.
\]

The main goal of this subsection is to obtain the jointly optimal
$(T_{Q}+T)$-policy (over $Q$ and $T$). We need the following lemmas
before we achieve this goal. The following result provides that for
a fixed value $Q$, what is the condition for that some $(T_{Q}+T)$-policy
may achieve less average cost than the quantity policy with parameter
$Q$. Define

\[
\overline{Q}\triangleq\frac{-\sum_{i=1}^{n}\omega_{i}(2\sigma_{i}^{2}-\frac{D_{i}\sigma^{2}}{D})+\sqrt{[\sum_{i=1}^{n}\omega_{i}(2\sigma_{i}^{2}-\frac{D_{i}\sigma^{2}}{D}]^{2})+8A_{D}D^{2}\sum_{i=1}^{n}\omega_{i}D_{i}}}{2\sum_{i=1}^{n}\omega_{i}D_{i}},
\]
and we show that $\overline{Q}$ is the threshold value in terms of
whether a quantity policy with parameter $Q$ can be improved by a
$(T_{Q}+T)$-policy.
\begin{lem}
\label{lemmabarQ}The quantity policy with parameter $Q$ can be improved
by a $(T_{Q}+T)$-policy if and only if $Q$ satisfies the following:
\begin{eqnarray}
\sum_{i=1}^{n}\omega_{i}D_{i}Q^{2}+\sum_{i=1}^{n}\omega_{i}(2\sigma_{i}^{2}-\frac{D_{i}\sigma^{2}}{D})Q-2A_{D}D^{2}\leq0\label{barQ}
\end{eqnarray}
which is equivalent to $0\leq Q\leq\overline{Q}$.
\end{lem}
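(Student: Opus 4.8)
The plan is to collapse the two-parameter family of $(T_{Q}+T)$-policies into a one-dimensional problem in the expected cycle length. Fix $Q$ and write $a\triangleq\sum_{i=1}^{n}\omega_{i}D_{i}>0$, $b\triangleq\sum_{i=1}^{n}c_{i}D_{i}$, and $\kappa\triangleq A_{D}D-Q\sum_{i=1}^{n}\omega_{i}\bigl(\tfrac{\sigma_{i}^{2}}{D}-\tfrac{D_{i}\sigma^{2}}{2D^{2}}\bigr)$. Setting $x\triangleq Q+DT$, which ranges over $[Q,\infty)$ as $T$ ranges over $[0,\infty)$, the expression for $AC^{QTP}(Q,T)$ displayed just above the lemma becomes the single-variable function
\[
f(x)\;=\;\frac{a}{2D}\,x+\frac{\kappa}{x}+b,\qquad x\ge Q,
\]
and in particular $AC^{QP}(Q)=AC^{QTP}(Q,0)=f(Q)$. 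So ``the quantity policy with parameter $Q$ can be improved by a $(T_{Q}+T)$-policy'' is precisely the assertion that $f(x)<f(Q)$ for some $x>Q$.

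Next I would analyze $f$ on $(0,\infty)$ via $f'(x)=\tfrac{a}{2D}-\tfrac{\kappa}{x^{2}}$. If $\kappa\le 0$ then $f'>0$ everywhere, so $f$ is strictly increasing on $[Q,\infty)$ and no improvement is possible; if $\kappa>0$ then $f$ is strictly convex with unique minimizer $x^{*}=\sqrt{2D\kappa/a}$, strictly decreasing on $(0,x^{*})$ and strictly increasing on $(x^{*},\infty)$, so an improving $x>Q$ exists iff $Q<x^{*}$, i.e.\ $aQ^{2}<2D\kappa$. Since $aQ^{2}<2D\kappa$ already forces $\kappa>0$, the single condition $aQ^{2}<2D\kappa$ characterizes improvability in both cases (when $\kappa\le 0$ one has $aQ^{2}\ge 0\ge 2D\kappa$). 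Substituting the definition of $\kappa$ and simplifying with $2D\kappa=2A_{D}D^{2}-Q\sum_{i=1}^{n}\omega_{i}\bigl(2\sigma_{i}^{2}-\tfrac{D_{i}\sigma^{2}}{D}\bigr)$, the inequality $aQ^{2}<2D\kappa$ rearranges into $\sum_{i=1}^{n}\omega_{i}D_{i}Q^{2}+\sum_{i=1}^{n}\omega_{i}\bigl(2\sigma_{i}^{2}-\tfrac{D_{i}\sigma^{2}}{D}\bigr)Q-2A_{D}D^{2}<0$, which is (\ref{barQ}).

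Finally, to obtain the equivalence with $0\le Q\le\overline{Q}$, note that the left-hand side of (\ref{barQ}) is a quadratic in $Q$ with positive leading coefficient $\sum_{i=1}^{n}\omega_{i}D_{i}$ and strictly negative constant term $-2A_{D}D^{2}$; hence it has two real roots of opposite sign, and its unique nonnegative root is exactly the quantity $\overline{Q}$ obtained from the quadratic formula (the ``$+$'' branch), so the quadratic is $\le 0$ for $Q\ge 0$ precisely on $[0,\overline{Q}]$. The only step that needs genuine care is the case split on the sign of $\kappa$ together with the observation that ``some $T>0$ strictly lowers the cost'' is equivalent to ``$Q$ lies strictly to the left of the minimizer of the convex map $x\mapsto f(x)$''; everything else is bookkeeping. (This argument produces the strict version of (\ref{barQ}); the endpoint $Q=\overline{Q}$, where $f$ already attains its minimum at $x=Q$, yields no strict improvement, so stating (\ref{barQ}) with ``$\le$'' is a mild convention about the boundary.)
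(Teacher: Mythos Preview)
Your argument is correct and follows essentially the same route as the paper: both compare $AC^{QP}(Q)$ with $AC^{QTP}(Q,T)$ and reduce the existence of an improving $T\ge 0$ to the quadratic condition in $Q$. The paper simply expands $AC^{QP}(Q)\ge AC^{QTP}(Q,T)$ and simplifies to the linear-in-$T$ inequality $DQ(\sum_i\omega_iD_i)T\le 2A_DD^2-\sum_i\omega_i(2\sigma_i^2-D_i\sigma^2/D)Q-\sum_i\omega_iD_iQ^2$, whereas you first collapse $AC^{QTP}(Q,T)$ to the single-variable convex function $f(x)=\tfrac{a}{2D}x+\tfrac{\kappa}{x}+b$ in $x=Q+DT$ and argue via the location of its minimizer; the resulting condition $aQ^2<2D\kappa$ is algebraically identical to the paper's. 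Your treatment is slightly more careful in two respects: you separate the case $\kappa\le 0$ explicitly, and you flag that the endpoint $Q=\overline{Q}$ gives no \emph{strict} improvement, a boundary issue the paper's weak-inequality phrasing leaves implicit.
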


\begin{proof}
If the quantity policy with parameter $Q$ can be improved by a $(T_{Q}+T)$-policy,
that is $AC^{QP}(Q)\geq AC^{QTP}(Q,T)$,
\[
\frac{A_{D}D}{Q}+\frac{Q}{2D}\sum_{i=1}^{n}\omega_{i}D_{i}+\sum_{i=1}^{n}c_{i}D_{i}-\sum_{i=1}^{n}\omega_{i}(\frac{\sigma_{i}^{2}}{D}-\frac{D_{i}\sigma^{2}}{2D^{2}})
\]
\[
\geq\frac{Q+DT}{2D}\sum_{i=1}^{n}\omega_{i}D_{i}+\sum_{i=1}^{n}c_{i}D_{i}+\frac{A_{D}D-\sum_{i=1}^{n}\omega_{i}(\frac{\sigma_{i}^{2}}{D}-\frac{D_{i}\sigma^{2}}{2D^{2}})Q}{Q+DT}.
\]
After some algebraic manipulation, we arrive at
\[
DQ\sum_{i=1}^{n}\omega_{i}D_{i}T\leq2A_{D}D^{2}-\sum_{i=1}^{n}\omega_{i}(2\sigma_{i}^{2}-\frac{D_{i}\sigma^{2}}{D})Q-\sum_{i=1}^{n}\omega_{i}D_{i}Q^{2}.
\]
The quantity policy with parameter $Q$ can be improved by some $(T_{Q}+T)$-policy
if and only if we can choose a non-negative value of $T$ such that
the above inequality is satisfied. This is always possible if
\[
\sum_{i=1}^{n}\omega_{i}D_{i}Q^{2}+\sum_{i=1}^{n}\omega_{i}(2\sigma_{i}^{2}-\frac{D_{i}\sigma^{2}}{D})Q-2A_{D}D^{2}\leq0.
\]
Since $Q\geq0$, we only consider the positive root of the quadratic
equation. Thus, the $Q$-policy can be improved if and only if $0\leq Q\leq\overline{Q}$,
and the proof is completed.
\end{proof}
The next result provides the optimal value $T$ of the $(T_{Q}+T)$-policies,
if the parameter $Q$ satisfies (\ref{barQ}).
\begin{lem}
\label{Topt(Q)}If $Q$ satisfies (\ref{barQ}) , the optimal $(T_{Q}+T)$-policy
(over $T$) is $(Q+T^{opt})$, where $T^{opt}(Q)=\sqrt{\frac{2A_{D}-\sum_{i=1}^{n}\omega_{i}(2\sigma_{i}^{2}Q/D^{2}-D_{i}\sigma^{2}Q/D^{3})}{\sum_{i=1}^{n}\omega_{i}D_{i}}}-\frac{Q}{D}$.
\end{lem}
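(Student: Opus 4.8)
The plan is to fix a value of $Q$ satisfying \eqref{barQ} and to minimise the already-derived average cost $AC^{QTP}(Q,T)$ over the feasible range $T\in[0,\infty)$ by a one-variable convexity argument. The right change of variable is $x=Q+DT$, which increases continuously from $Q$ to $\infty$ as $T$ runs over $[0,\infty)$. Reading off the formula for $AC^{QTP}(Q,T)$, the first term equals $ax$ with $a=\frac{1}{2D}\sum_{i=1}^{n}\omega_iD_i>0$, the middle term is the constant $C=\sum_{i=1}^{n}c_iD_i$, and the last term is $b/x$ with $b=A_DD-\sum_{i=1}^{n}\omega_i\bigl(\frac{\sigma_i^2}{D}-\frac{D_i\sigma^2}{2D^2}\bigr)Q$. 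Thus minimising $AC^{QTP}(Q,\cdot)$ over $T\ge0$ is the same as minimising $g(x)=ax+\frac{b}{x}+C$ over $x\in[Q,\infty)$.

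Next I would record the single identity that links everything: multiplying $b-aQ^2$ by $2D>0$ gives $2A_DD^2-\sum_{i=1}^{n}\omega_i\bigl(2\sigma_i^2-\frac{D_i\sigma^2}{D}\bigr)Q-\sum_{i=1}^{n}\omega_iD_iQ^2$, which is exactly the negative of the left-hand side of \eqref{barQ}. Hence the hypothesis \eqref{barQ} is equivalent to $b\ge aQ^2$; in particular $b\ge aQ^2\ge0$, and $b>0$ whenever $Q>0$ (for $Q=0$ one has $b=A_DD>0$ directly). Since $a>0$ and $b>0$, the function $g$ is strictly convex on $(0,\infty)$ with unique unconstrained minimiser $x^{\ast}=\sqrt{b/a}$, and the inequality $b\ge aQ^2$ says precisely that $x^{\ast}\ge Q$, i.e. that $x^{\ast}$ lies in the feasible interval $[Q,\infty)$.

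Therefore the constrained minimiser is attained at $x=x^{\ast}$, i.e. at $T=T^{opt}(Q)=\tfrac{1}{D}(x^{\ast}-Q)\ge0$, which establishes that the optimal $(T_Q+T)$-policy over $T$ is $(Q+T^{opt}(Q))$. It only remains to put $x^{\ast}$ into the stated form. Using $a=\frac{1}{2D}\sum_{i=1}^{n}\omega_iD_i$ one has $\frac{1}{D^2}\cdot\frac{b}{a}=\frac{2b/D}{\sum_{i=1}^{n}\omega_iD_i}$, and $\frac{2b}{D}=2A_D-\sum_{i=1}^{n}\omega_i\bigl(\frac{2\sigma_i^2Q}{D^2}-\frac{D_i\sigma^2Q}{D^3}\bigr)$, so that $\frac{x^{\ast}}{D}=\sqrt{\frac{2A_D-\sum_{i=1}^{n}\omega_i(2\sigma_i^2Q/D^2-D_i\sigma^2Q/D^3)}{\sum_{i=1}^{n}\omega_iD_i}}$ and hence $T^{opt}(Q)=\frac{x^{\ast}}{D}-\frac{Q}{D}$ is exactly the asserted expression.

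There is no genuine analytic difficulty here: once the substitution $x=Q+DT$ is made, the whole lemma is elementary calculus of $x\mapsto ax+b/x$. The only point demanding care is the bookkeeping in the second paragraph — one must check that the \emph{feasibility} condition $x^{\ast}\ge Q$ coincides with the \emph{algebraic} inequality \eqref{barQ}, and that $b>0$ so that $g$ really attains an interior minimum rather than being monotone on $[Q,\infty)$. Both are disposed of by the single computation $2D(b-aQ^2)=-\bigl(\sum_{i=1}^{n}\omega_iD_iQ^2+\sum_{i=1}^{n}\omega_i(2\sigma_i^2-D_i\sigma^2/D)Q-2A_DD^2\bigr)$.
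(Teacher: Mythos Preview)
Your proof is correct and follows essentially the same approach as the paper: both optimise $AC^{QTP}(Q,T)$ over $T$ by one-variable calculus, identify the stationary point, and use \eqref{barQ} to verify that it lies in the feasible range $T\ge0$. Your substitution $x=Q+DT$ and explicit verification that $b>0$ (so the function genuinely has an interior minimum rather than being monotone) make the argument tidier and more complete than the paper's terse derivative computation, but the underlying idea is the same.
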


\begin{proof}
To optimize $AC^{QTP}(Q,T)$ as a function of $T$, we have to solve
the equation $dAC^{QTP}(Q,T)/dT=0$, which is
\[
\frac{1}{2}\sum_{i=1}^{n}\omega_{i}D_{i}-\frac{A_{D}D-\sum_{i=1}^{n}\omega_{i}(\sigma_{i}^{2}Q/D-\frac{1}{2}D_{i}\sigma^{2}Q/D^{2})}{(Q+D\tau_{1})^{2}}D=0.
\]
Hence $T^{opt}(Q)=\sqrt{\frac{2A_{D}-\sum_{i=1}^{n}\omega_{i}(2\sigma_{i}^{2}Q/D^{2}-D_{i}\sigma^{2}Q/D^{3})}{\sum_{i=1}^{n}\omega_{i}D_{i}}}-\frac{Q}{D}$.
Since $Q$ satisfies (\ref{barQ}), we can see $T^{opt}(Q)\geq0$.
\end{proof}
The following result characterizes the optimality of $(T_{Q}+T)$-policy,
optimizing jointly on $Q$ and $T$. It is somewhat surprising, which
states that either a quantity policy or a time policy is optimal,
depending on whether $\sum_{i=1}^{n}\omega_{i}(2D\sigma_{i}^{2}-D_{i}\sigma^{2})$
is positive or negative.
\begin{thm}
\label{jointoptimality} If $\sum_{i=1}^{n}\omega_{i}(2D\sigma_{i}^{2}-D_{i}\sigma^{2})>0$,
the jointly optimal $(T_{Q}+T)$-policy is the optimal quantity policy;
If $\sum_{i=1}^{n}\omega_{i}(2D\sigma_{i}^{2}-D_{i}\sigma^{2})<0$,
the joint optimal $(T_{Q}+T)$-policy is the optimal time policy.
\end{thm}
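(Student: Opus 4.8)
The plan is to reduce the joint optimization of $AC^{QTP}(Q,T)$ over the two-dimensional region $\{Q\ge 0, T\ge 0\}$ to a one-dimensional problem by exploiting the two preceding lemmas. First I would observe that the feasible $(Q,T)$-pairs split into two groups: those with $0\le Q\le\overline Q$, where Lemma \ref{lemmabarQ} tells us the pure quantity policy at that $Q$ can be (weakly) improved by adding a positive delay, and those with $Q>\overline Q$, where it cannot, so that for such $Q$ the best choice is $T=0$, i.e. the pure quantity policy $AC^{QP}(Q)$. On the region $0\le Q\le\overline Q$, Lemma \ref{Topt(Q)} gives the optimal delay $T^{opt}(Q)$ in closed form, and the key algebraic simplification is that the total cycle length under this optimal delay, $Q/D+T^{opt}(Q)=\sqrt{(2A_D-\sum_i\omega_i(2\sigma_i^2Q/D^2-D_i\sigma^2Q/D^3))/\sum_i\omega_i D_i}$, depends on $Q$ only through the term linear in $Q$ under the square root.

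Next I would substitute $T=T^{opt}(Q)$ back into $AC^{QTP}(Q,T)$ to obtain a function $g(Q)$ of the single variable $Q$ on $[0,\overline Q]$, and likewise write $AC^{QP}(Q)$ on $[\overline Q,\infty)$; the overall optimum is the minimum of these two pieces, which agree at $Q=\overline Q$ (since $T^{opt}(\overline Q)=0$). After the substitution the constant $\sum_i c_iD_i$ drops out of the comparison, and $g(Q)$ should collapse to an expression in which the dependence on $Q$ enters monotonically. Concretely, I expect $g(Q)$ to be affine (or at worst monotone) in $Q$ with slope proportional to $\sum_{i=1}^n\omega_i(2D\sigma_i^2-D_i\sigma^2)$ — up to a positive factor — so that the sign of this quantity decides whether $g$ is increasing or decreasing on $[0,\overline Q]$. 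This is the step I expect to be the main obstacle: carrying out the substitution cleanly and recognizing that all the $Q$-dependence of the optimized delayed cost reduces to a single linear term whose slope is exactly the advertised sign-determining quantity.

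Having established monotonicity of $g$, I would finish by a case split. If $\sum_{i=1}^n\omega_i(2D\sigma_i^2-D_i\sigma^2)>0$, then $g$ is increasing on $[0,\overline Q]$, so its minimum is at $Q=0$; but $Q=0$ with the induced optimal delay is just a pure time policy, so in this case no genuinely mixed policy beats the best time policy, and combining with the $Q>\overline Q$ branch (pure quantity policies) the jointly optimal $(T_Q+T)$-policy is the optimal quantity policy — here one also uses Theorem \ref{QPvsTP}, which says the optimal quantity policy already beats the optimal time policy in this regime. If $\sum_{i=1}^n\omega_i(2D\sigma_i^2-D_i\sigma^2)<0$, then $g$ is decreasing on $[0,\overline Q]$, so the best mixed policy is attained as $Q\to 0$ (a pure time policy again), $\overline Q$ is the threshold beyond which only pure quantity policies are available, and by the same Theorem \ref{QPvsTP} the optimal time policy dominates; hence the jointly optimal $(T_Q+T)$-policy is the optimal time policy. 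A minor point to check is that at $Q=0$ the formula $T^{opt}(0)=\sqrt{2A_D/\sum_i\omega_iD_i}$ indeed coincides with $T^*$, confirming the degenerate member of the family really is the optimal time policy.
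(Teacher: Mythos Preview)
Your overall plan---split at $\overline Q$, use Lemma \ref{Topt(Q)} to reduce to a one-variable function $g(Q)$ on $[0,\overline Q]$, then argue by monotonicity---is exactly the paper's approach. But you have the sign of the slope of $g$ backwards, and this scrambles your case analysis.

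When you carry out the substitution $T=T^{opt}(Q)$ in $AC^{QTP}$, the result is
\[
g(Q)=\sqrt{\Bigl(2A_D-\tfrac{1}{D^{3}}\sum_{i=1}^n\omega_i(2D\sigma_i^{2}-D_i\sigma^{2})\,Q\Bigr)\sum_{i=1}^n\omega_iD_i}\;+\;\sum_{i=1}^n c_iD_i,
\]
so the $Q$-dependence sits \emph{inside} the square root with a \emph{minus} sign. Hence when $\sum_i\omega_i(2D\sigma_i^{2}-D_i\sigma^{2})>0$ the function $g$ is \emph{decreasing} on $[0,\overline Q]$, and when the sum is negative $g$ is \emph{increasing}. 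Your proposal has these reversed (``slope proportional \ldots up to a positive factor''), and your Case~2 is then internally inconsistent: you write ``$g$ is decreasing, so the best mixed policy is attained as $Q\to 0$,'' which cannot both be true.

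With the correct monotonicity the argument is actually cleaner than your roundabout version. In the first case $g$ is decreasing, so its minimum on $[0,\overline Q]$ is at $Q=\overline Q$, where $T^{opt}(\overline Q)=0$; thus the optimized mixed policy there is already a pure quantity policy, and combined with the $Q>\overline Q$ branch the joint optimum is simply $\min_Q AC^{QP}(Q)$---no appeal to Theorem \ref{QPvsTP} is needed. In the second case $g$ is increasing, so its minimum is at $Q=0$, which (as you correctly note) is the optimal time policy; comparison with the $Q>\overline Q$ branch then does use Theorem \ref{QPvsTP}.
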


\begin{proof}
We first fix the parameter $Q$. From Lemma \ref{lemmabarQ}, we know
that if (\ref{barQ}) does not hold, i.e $Q>\overline{Q}$, the quantity
policy with $Q$ cannot be improved by a $(T_{Q}+T)$-policy. We now
focus on the values of $Q$ that satisfy (\ref{barQ}). For such $Q$,
we have the optimal value $T^{opt}(Q)$ of the $(T_{Q}+T)$-policies
from Lemma \ref{Topt(Q)}, which results in the optimal average cost
determined by the pair $(Q,T^{opt}(Q))$ as follows:
\begin{eqnarray}
AC^{QTP}(Q,T^{opt}(Q))=\sqrt{[2A_{D}-\sum_{i=1}^{n}\omega_{i}(2\sigma_{i}^{2}/D^{2}-D_{i}\sigma^{2}/D^{3})Q](\sum_{i=1}^{n}\omega_{i}D_{i})}+\sum_{i=1}^{n}c_{i}D_{i}.\label{ACQTP-jointly}
\end{eqnarray}

Now we vary $Q$ to obtain the joint optimality of the $(T_{Q}+T)$
type policies.

Case 1: $\sum_{i=1}^{n}\omega_{i}(2D\sigma_{i}^{2}-D_{i}\sigma^{2})>0$.
In this case, (\ref{ACQTP-jointly}) is a decreasing function of $Q$.
Since $0\leq Q\leq\overline{Q}$, take $Q$ as close from the left
side to $\overline{Q}$ as possible to minimize (\ref{ACQTP-jointly}).
Further, recall that and no improvement of a $(T_{Q}+T)$-policy over
the quantity policy with $Q>\overline{Q}$, therefore, the optimal
$(T_{Q}+T)$-policy is the optimal quantity policy.

Case 2: $\sum_{i=1}^{n}\omega_{i}(2D\sigma_{i}^{2}-D_{i}\sigma^{2})<0$.
In this case, (\ref{ACQTP-jointly}) is an increasing function of
$Q$. Since $0\leq Q\leq\overline{Q}$, take $Q=0$ to minimize (\ref{ACQTP-jointly}),
which implies that the optimal $(T_{Q}+T)$-policy is a time-based
policy.
\end{proof}
\begin{rem}
In Theorem \ref{QPvsTP}, we show that if $\sum_{i=1}^{n}\omega_{i}(2D\sigma_{i}^{2}-D_{i}\sigma^{2})>0$,
the optimal quantity policy achieves less average cost than the optimal
time policy; If $\sum_{i=1}^{n}\omega_{i}(2D\sigma_{i}^{2}-D_{i}\sigma^{2})<0$,
the optimal time policy achieves less average cost than the optimal
quantity policy. In Theorem \ref{jointoptimality}, we obtain a stronger
result, which claims within the set of $(T_{Q}+T)$ type policies,
the jointly optimal policy can only be either a quantity policy or
a time policy, depending on $\sum_{i=1}^{n}\omega_{i}(2D\sigma_{i}^{2}-D_{i}\sigma^{2})$
is positive or negative. Our next goal is to seek some policy which
beats all quantity policies and all time policies in terms of average
cost criterion.
\end{rem}

\subsection{Instantaneous Rate Policy\label{subsec:Instantaneous-Rate-Policy}}

We propose a new policy, where a clearing is triggered whenever the
instantaneous waiting penalty rate hits a threshold value, i.e., a
clearing is made as long as $\sum_{i=1}^{n}\omega_{i}N_{i}(t)=M$,
$M$ is a threshold value we need to optimize. We call this new policy
as an instantaneous rate policy (IRP). Recalling that under a quantity-based
policy with parameter $Q$, we clear the system as long as the total
consolidated load reaches $Q$, and under a time-based policy with
parameter $T$, the system is cleared every $T$ units time. Clearly,
under a quantity-based policy, we just need to track the total input
process as a whole. Under a time-based policy, we do not need to track
any process at all. In contrast, we need to track each input process
associated with each type, when we implement an instantaneous rate
policy.

The motivation of the instantaneous rate policy is as follows: suppose
the inputs are discrete and arrive one by one, if the first arriving
item has large waiting sensitivity, it is not economical to hold the
consolidated load for a long time; while if the first arriving item
has small waiting sensitivity, we can prolong the holding time of
the consolidated load. Upon this observation, we should realize that
the optimal policy requires tracking each input process associated
with each type.

Define $\tau_{M}=\inf\{t>0:\sum_{i=1}^{n}\omega_{i}N_{i}(t)\geq M\}$,
which is a stopping time w.r.t the filtration generated by $\left\{ B_{1}(t),B_{2}(t),\ldots,B_{n}(t)\right\} _{t\geq0}$.
From Proposition 3.3 of \cite{Harr13}, we have
\begin{lem}
\label{lap-weighted} For $s>0$,
\[
\mathrm{\mathbb{E}}[\exp(-s\tau_{M})]=\exp\left(\frac{\sum_{i=1}^{n}\omega_{i}D_{i}-\sqrt{(\sum_{i=1}^{n}\omega_{i}D_{i})^{2}+2s\sum_{i=1}^{n}\omega_{i}^{2}\sigma_{i}^{2}}}{\sum_{i=1}^{n}\omega_{i}^{2}\sigma_{i}^{2}}M\right),
\]
\[
\mathrm{\mathbb{E}}[\tau_{M}]=\frac{M}{\sum_{i=1}^{n}\omega_{i}D_{i}},\quad\mathrm{\mathbb{E}}[\tau_{M}^{2}]=\frac{M^{2}}{(\sum_{i=1}^{n}\omega_{i}D_{i})^{2}}+\frac{\sum_{i=1}^{n}\omega_{i}^{2}\sigma_{i}^{2}M}{(\sum_{i=1}^{n}\omega_{i}D_{i})^{3}}.
\]
\end{lem}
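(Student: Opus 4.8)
The plan is to recognize that $\tau_M$ is the first passage time of a single drifted Brownian motion to a level, so the result should follow directly from the cited Proposition~3.3 of \cite{Harr13} once the weighted sum is repackaged as one process. First I would set $Z(t) \triangleq \sum_{i=1}^n \omega_i N_i(t) = \sum_{i=1}^n \omega_i(D_i t + \sigma_i B_i(t))$. Since the $B_i$ are independent standard Brownian motions, $\sum_{i=1}^n \omega_i \sigma_i B_i(t)$ is a mean-zero Gaussian process with stationary independent increments and variance $t\sum_{i=1}^n \omega_i^2\sigma_i^2$; hence it equals $\bigl(\sum_{i=1}^n \omega_i^2\sigma_i^2\bigr)^{1/2} \widetilde B(t)$ in distribution for a standard Brownian motion $\widetilde B$. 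Thus $Z(t)$ is a Brownian motion with drift $\widetilde D \triangleq \sum_{i=1}^n \omega_i D_i$ and diffusion coefficient $\widetilde\sigma^2 \triangleq \sum_{i=1}^n \omega_i^2\sigma_i^2$, and $\tau_M = \inf\{t>0 : Z(t) \geq M\}$ is exactly the level-$M$ hitting time of this single-input Brownian motion.

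Next I would invoke Proposition~3.3 of \cite{Harr13} (the same tool used for Lemma~\ref{lap-quantity}) with $D$ replaced by $\widetilde D$, $\sigma^2$ replaced by $\widetilde\sigma^2$, and $Q$ replaced by $M$. That immediately gives
\[
\mathbb{E}[\exp(-s\tau_M)] = \exp\left(-\frac{\sqrt{\widetilde D^2 + 2s\widetilde\sigma^2} - \widetilde D}{\widetilde\sigma^2}M\right),
\]
which is the claimed formula after substituting back $\widetilde D = \sum_{i=1}^n \omega_i D_i$ and $\widetilde\sigma^2 = \sum_{i=1}^n \omega_i^2\sigma_i^2$. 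The first two moments then follow by differentiating the Laplace transform at $s=0$ (equivalently, by reading off the inverse Gaussian mean and variance): $\mathbb{E}[\tau_M] = M/\widetilde D$ and $\operatorname{Var}(\tau_M) = \widetilde\sigma^2 M/\widetilde D^3$, so $\mathbb{E}[\tau_M^2] = M^2/\widetilde D^2 + \widetilde\sigma^2 M/\widetilde D^3$, matching the statement.

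The only genuine subtlety — and the step I would be most careful about — is justifying that the weighted superposition $Z(t)$ is itself a (one-dimensional) drifted Brownian motion with the stated parameters, i.e.\ that Proposition~3.3 of \cite{Harr13} applies verbatim. This rests on the fact that a linear combination of independent Brownian motions is again a Brownian motion (in law), which follows from checking Gaussian finite-dimensional distributions, stationary independent increments, and a.s.\ continuity; the drift and variance computations are routine. One should also note in passing that $\tau_M$ is a.s.\ finite and is a stopping time with respect to the filtration generated by $(B_1,\ldots,B_n)$ — finiteness because $\widetilde D > 0$ forces $Z(t) \to \infty$ — which legitimizes treating $\tau_M$ exactly as $T_Q$ was treated in Lemma~\ref{lap-quantity}. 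With that observation in place the rest is a direct transcription, so I would keep the write-up short: define $Z$, state it is Brownian with drift $\widetilde D$ and variance parameter $\widetilde\sigma^2$, apply \cite[Proposition~3.3]{Harr13}, and collect the moments.
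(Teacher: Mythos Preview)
Your proposal is correct and is exactly the paper's approach: the paper simply cites \cite[Proposition~3.3]{Harr13} for this lemma, implicitly treating $\sum_{i=1}^n \omega_i N_i(t)$ as a one-dimensional drifted Brownian motion with drift $\sum_{i=1}^n \omega_i D_i$ and variance parameter $\sum_{i=1}^n \omega_i^2\sigma_i^2$, precisely as you spell out. Your write-up actually makes this identification explicit where the paper leaves it tacit, so nothing is missing.
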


The next result gives joint moment generation function for $(B_{i}(\tau_{M}),\tau_{M})$.
\begin{lem}
\label{joint-weighted} For $s_{1}^{2}+2s_{2}<0$,
\begin{eqnarray*}
 &  & \mathrm{\mathbb{E}}[\exp(s_{1}B_{i}(\tau_{M})+s_{2}\tau_{M})]\\
= &  & \exp\left(\frac{s_{1}\omega_{i}\sigma_{i}+\sum_{i=1}^{n}\omega_{i}D_{i}-\sqrt{(s_{1}\omega_{i}\sigma_{i}+\sum_{i=1}^{n}\omega_{i}D_{i})^{2}-(s_{1}^{2}+2s_{2})\sum_{i=1}^{n}\omega_{i}^{2}\sigma_{i}^{2}}}{\sum_{i=1}^{n}\omega_{i}^{2}\sigma_{i}^{2}}M\right),
\end{eqnarray*}
\[
\mathrm{\mathbb{E}}[B_{i}(\tau_{M})\tau_{M}]=-\frac{\omega_{i}\sigma_{i}M}{(\sum_{i=1}^{n}\omega_{i}D_{i})^{2}}.
\]
\end{lem}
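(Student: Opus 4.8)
The starting observation is that $S(t):=\sum_{j=1}^{n}\omega_{j}N_{j}(t)=\big(\sum_{j}\omega_{j}D_{j}\big)t+\sum_{j}\omega_{j}\sigma_{j}B_{j}(t)$ is itself a drifted Brownian motion, with drift $\mu:=\sum_{j}\omega_{j}D_{j}$ and variance parameter $\nu^{2}:=\sum_{j}\omega_{j}^{2}\sigma_{j}^{2}$, so that $\tau_{M}$ is just its first passage time to the level $M$ and $S(\tau_{M})=M$ almost surely. This makes the situation structurally identical to that of Lemma~\ref{joint-quantity}, with $\sum_{j}N_{j}$ replaced by $\sum_{j}\omega_{j}N_{j}$, and the plan is to reuse the same martingale argument carried out in the Appendix for $(B_{i}(T_{Q}),T_{Q})$: for real parameters $\alpha,\beta$, the process $Z_{t}^{\alpha,\beta}=\exp\!\big(\alpha B_{i}(t)+\beta S(t)-\gamma(\alpha,\beta)t\big)$ is a $\{\mathcal{G}_{t}\}$-martingale once $\gamma(\alpha,\beta)=\beta\mu+\tfrac12\big(\alpha^{2}+2\alpha\beta\omega_{i}\sigma_{i}+\beta^{2}\nu^{2}\big)$, which is immediate after collecting the coefficients of the independent $B_{1},\dots,B_{n}$.

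Given $(s_{1},s_{2})$ with $s_{1}^{2}+2s_{2}<0$, I would set $\alpha=s_{1}$ and pick $\beta=\beta_{\ast}$ to be the (nonnegative) root of $\gamma(s_{1},\beta)=-s_{2}$, i.e.\ of $\nu^{2}\beta^{2}+2(\mu+s_{1}\omega_{i}\sigma_{i})\beta+(s_{1}^{2}+2s_{2})=0$; the hypothesis $s_{1}^{2}+2s_{2}<0$ makes the discriminant $(\mu+s_{1}\omega_{i}\sigma_{i})^{2}-\nu^{2}(s_{1}^{2}+2s_{2})$ positive. Optional stopping of $Z^{s_{1},\beta_{\ast}}$ at $\tau_{M}$, together with $S(\tau_{M})=M$, then yields $\mathbb{E}[e^{s_{1}B_{i}(\tau_{M})+\beta_{\ast}M+s_{2}\tau_{M}}]=1$, hence $\mathbb{E}[e^{s_{1}B_{i}(\tau_{M})+s_{2}\tau_{M}}]=e^{-\beta_{\ast}M}$, which is exactly the claimed closed form. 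The moment $\mathbb{E}[B_{i}(\tau_{M})\tau_{M}]$ would then come out by differentiating that closed form once in $s_{1}$ and once in $s_{2}$ and evaluating at the origin (where the radicand equals $\mu^{2}>0$, so the expression is smooth there), a short computation giving $-\omega_{i}\sigma_{i}M/\mu^{2}=-\omega_{i}\sigma_{i}M/(\sum_{j}\omega_{j}D_{j})^{2}$.

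The main obstacle will be the justification of optional stopping: one stops at the bounded time $\tau_{M}\wedge t$ and lets $t\to\infty$, but $Z^{s_{1},\beta_{\ast}}_{\tau_{M}\wedge t}$ still carries the unbounded factor $e^{s_{1}B_{i}(\tau_{M}\wedge t)}$, so uniform integrability does not follow merely from $S(\cdot\wedge\tau_{M})\le M$ and $e^{s_{2}(\cdot\wedge\tau_{M})}\le1$. My preferred way around this — in fact the cleanest route for the whole lemma — is a Gaussian change of variable. Write $W(t):=\nu^{-1}\sum_{j}\omega_{j}\sigma_{j}B_{j}(t)$, a standard Brownian motion with $S(t)=\mu t+\nu W(t)$ and $W(\tau_{M})=(M-\mu\tau_{M})/\nu$, and decompose $B_{i}(t)=\rho W(t)+\sqrt{1-\rho^{2}}\,\widetilde W(t)$ with $\rho=\omega_{i}\sigma_{i}/\nu\in(0,1]$ and $\widetilde W$ a standard Brownian motion independent of $W$ (hence of $\tau_{M}$, which is measurable with respect to the filtration of $W$). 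Conditioning on that filtration integrates out $\widetilde W(\tau_{M})\sim N(0,\tau_{M})$ and reduces everything to a pure Laplace transform of $\tau_{M}$:
\[
\mathbb{E}\big[e^{s_{1}B_{i}(\tau_{M})+s_{2}\tau_{M}}\big]=e^{s_{1}\rho M/\nu}\,\mathbb{E}\big[e^{\theta\tau_{M}}\big],\qquad \theta:=s_{2}+\tfrac12 s_{1}^{2}(1-\rho^{2})-\tfrac{s_{1}\rho\mu}{\nu}.
\]
Here $s_{1}^{2}+2s_{2}<0$ is used — and only here — to check $\theta\le\mu^{2}/2\nu^{2}$ (equivalently, that $\tfrac12(s_{1}\rho+\mu/\nu)^{2}\ge0$), which is precisely the condition for the inverse Gaussian moment generating function to be finite; Lemma~\ref{lap-weighted}, extended by analyticity over $\theta\le\mu^{2}/2\nu^{2}$, then evaluates $\mathbb{E}[e^{\theta\tau_{M}}]$, and substituting $\rho=\omega_{i}\sigma_{i}/\nu$ and simplifying the radicand to $(\mu+s_{1}\omega_{i}\sigma_{i})^{2}-(s_{1}^{2}+2s_{2})\nu^{2}$ gives the stated formula. (When $n=1$ one has $\rho=1$ and the decomposition degenerates, but then $B_{i}(\tau_{M})$ is an affine function of $\tau_{M}$ and the claim follows directly from Lemma~\ref{lap-weighted}.)
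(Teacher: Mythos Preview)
Your proposal is correct. The paper's own proof just says ``similar to the proof of Lemma~\ref{joint-quantity}'', i.e.\ it would replay the Appendix argument: choose coefficients $a_1,\dots,a_n$ so that $\sum_j a_jB_j(\tau_M)-\tfrac12\sum_j a_j^2\,\tau_M = s_1B_i(\tau_M)+s_2\tau_M+\text{const}$, and then verify uniform integrability of the stopped exponential martingale via a H\"older splitting---one factor controlled by $\sum_j\omega_jN_j(\cdot\wedge\tau_M)\le M$, the other an exponential martingale identically equal to~$1$. Your first route (the $Z^{\alpha,\beta}$ martingale built from $\alpha B_i+\beta S$) is exactly this argument in a convenient reparametrisation, and you rightly isolate the UI step as the only real work.

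Your second route---the orthogonal decomposition $B_i=\rho W+\sqrt{1-\rho^2}\,\widetilde W$ with $\widetilde W$ independent of $W$ and hence of $\tau_M$---is genuinely different from the paper and more economical: integrating out the conditionally Gaussian $\widetilde W(\tau_M)$ collapses the whole computation to the inverse-Gaussian MGF $\mathbb{E}[e^{\theta\tau_M}]$ already supplied by Lemma~\ref{lap-weighted}, so no optional-stopping or UI machinery is needed at all. A side benefit is that this derivation makes the closed form valid on the full open region $\{\theta<\mu^2/2\nu^2\}$, which contains a neighbourhood of $(s_1,s_2)=(0,0)$; the differentiation producing $\mathbb{E}[B_i(\tau_M)\tau_M]$ is then immediately justified, whereas the paper's route establishes the formula only on $\{s_1^2+2s_2<0\}$ and differentiates at the boundary point without further comment.
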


\begin{proof}
The proof is similar to the proof of Lemma \ref{joint-quantity}.
\end{proof}
By using Lemma \ref{lap-weighted} and Lemma \ref{joint-weighted},
we can obtain the following result which provides the expected waiting
time for the $i$-th type of items and the total waiting cost for
all types of items within one clearing cycle.
\begin{prop}
Under the instantaneous rate policy with parameter $M$, the expected
cumulative waiting time for $i$-th type of items within one clearing
cycle is
\begin{eqnarray*}
 &  & \mathrm{\mathbb{E}}\left[\int_{0}^{\tau_{M}}N_{i}(t)dt\right]=\frac{1}{2}\frac{D_{i}}{(\sum_{i=1}^{n}\omega_{i}D_{i})^{2}}M^{2}+\frac{1}{2}\frac{D_{i}\sum_{i=1}^{n}\omega_{i}^{2}\sigma_{i}^{2}}{(\sum_{i=1}^{n}\omega_{i}D_{i})^{3}}M-\frac{\omega_{i}\sigma_{i}^{2}}{(\sum_{i=1}^{n}\omega_{i}D_{i})^{2}}M,
\end{eqnarray*}
and the expected total waiting cost for all items within one dispatch
cycle is
\[
\sum_{i=1}^{n}\omega_{i}\mathrm{\mathbb{E}}\left[\int_{0}^{\tau_{M}}N_{i}(t)dt\right]=\frac{1}{2\sum_{i=1}^{n}\omega_{i}D_{i}}M^{2}-\frac{\sum_{i=1}^{n}\omega_{i}^{2}\sigma_{i}^{2}}{2(\sum_{i=1}^{n}\omega_{i}D_{i})^{2}}M.
\]
\end{prop}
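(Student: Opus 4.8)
The plan is to reduce the whole statement to the unified formula of Proposition~\ref{unifiedformula}, feeding it the two moments already computed in Lemmas~\ref{lap-weighted} and~\ref{joint-weighted}. Abbreviate $A\triangleq\sum_{i=1}^{n}\omega_{i}D_{i}$ and $S\triangleq\sum_{i=1}^{n}\omega_{i}^{2}\sigma_{i}^{2}$. The first thing to check is the integrability hypothesis of Proposition~\ref{unifiedformula}: the process $\sum_{i=1}^{n}\omega_{i}N_{i}(t)=At+\sum_{i=1}^{n}\omega_{i}\sigma_{i}B_{i}(t)$ is a Brownian motion with positive drift $A$, so $\tau_{M}$ is its first passage time to level $M$; by Lemma~\ref{lap-weighted} the Laplace transform $\mathbb{E}[\exp(-s\tau_{M})]$ is given by an expression analytic in $s$ on a neighbourhood of the origin (the radicand $A^{2}+2sS$ stays positive there), so $\tau_{M}$ has finite moments of every order, in particular $\mathbb{E}[\tau_{M}^{2}]<\infty$. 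Hence Proposition~\ref{unifiedformula} applies with $\tau=\tau_{M}$.

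With that in hand, Proposition~\ref{unifiedformula} gives, for each $i$,
\[
\mathbb{E}\Bigl[\int_{0}^{\tau_{M}}N_{i}(t)\,dt\Bigr]=\tfrac{1}{2}D_{i}\,\mathbb{E}[\tau_{M}^{2}]+\sigma_{i}\,\mathbb{E}[\tau_{M}B_{i}(\tau_{M})].
\]
Substituting $\mathbb{E}[\tau_{M}^{2}]=M^{2}/A^{2}+SM/A^{3}$ from Lemma~\ref{lap-weighted} and $\mathbb{E}[B_{i}(\tau_{M})\tau_{M}]=-\omega_{i}\sigma_{i}M/A^{2}$ from Lemma~\ref{joint-weighted}, and grouping the powers of $M$, produces
\[
\mathbb{E}\Bigl[\int_{0}^{\tau_{M}}N_{i}(t)\,dt\Bigr]=\frac{D_{i}}{2A^{2}}M^{2}+\frac{D_{i}S}{2A^{3}}M-\frac{\omega_{i}\sigma_{i}^{2}}{A^{2}}M,
\]
which is precisely the first displayed formula once $A$ and $S$ are written out in full. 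This step is pure bookkeeping of coefficients.

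For the total waiting cost I would multiply this identity by $\omega_{i}$ and sum over $i$. The $M^{2}$-coefficient becomes $\frac{1}{2A^{2}}\sum_{i}\omega_{i}D_{i}=\frac{1}{2A}$; the first linear term sums to $\frac{S}{2A^{3}}\sum_{i}\omega_{i}D_{i}=\frac{S}{2A^{2}}$; and the last sums to $-\frac{1}{A^{2}}\sum_{i}\omega_{i}^{2}\sigma_{i}^{2}=-\frac{S}{A^{2}}$, so the two linear terms combine to $-\frac{S}{2A^{2}}M$. Hence
\[
\sum_{i=1}^{n}\omega_{i}\,\mathbb{E}\Bigl[\int_{0}^{\tau_{M}}N_{i}(t)\,dt\Bigr]=\frac{1}{2A}M^{2}-\frac{S}{2A^{2}}M,
\]
which is the second formula. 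There is no genuine obstacle in this proof: the only point needing a sentence of justification is the finiteness of $\mathbb{E}[\tau_{M}^{2}]$ (immediate from Lemma~\ref{lap-weighted}, or from the fact that $\tau_{M}$ is inverse Gaussian), and the only risk is an arithmetic slip, which the cancellation $\sum_{i}\omega_{i}D_{i}=A$ makes easy to avoid.
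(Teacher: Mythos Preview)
Your proposal is correct and follows essentially the same approach as the paper: invoke Proposition~\ref{unifiedformula} with $\tau=\tau_{M}$, then substitute $\mathbb{E}[\tau_{M}^{2}]$ and $\mathbb{E}[\tau_{M}B_{i}(\tau_{M})]$ from Lemmas~\ref{lap-weighted} and~\ref{joint-weighted}. You are in fact slightly more careful than the paper, since you explicitly verify the finite-second-moment hypothesis and spell out the summation that yields the total waiting cost.
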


\begin{proof}
From Proposition \ref{unifiedformula}, under the instantaneous rate
policy with parameter $M$, the expected cumulative waiting time for
$i$-th type of items within one clearing cycle can be calculated
by

\[
\mathrm{\mathbb{E}}\left[\int_{0}^{\tau_{M}}N_{i}(t)dt\right]=\frac{1}{2}D_{i}\mathrm{\mathbb{E}}\left[\tau_{M}^{2}\right]+\sigma_{i}\mathrm{\mathbb{E}}\left[\tau_{M}B_{i}(\tau_{M})\right].
\]
From Lemma \ref{lap-weighted} and Lemma \ref{joint-weighted}, we
obtain\\

\[
\mathrm{\mathbb{E}}\left[\int_{0}^{\tau_{M}}N_{i}(t)dt\right]=\frac{1}{2}\frac{D_{i}}{(\sum_{i=1}^{n}\omega_{i}D_{i})^{2}}M^{2}+\frac{1}{2}\frac{D_{i}\sum_{i=1}^{n}\omega_{i}^{2}\sigma_{i}^{2}}{(\sum_{i=1}^{n}\omega_{i}D_{i})^{3}}M-\frac{\omega_{i}\sigma_{i}^{2}}{(\sum_{i=1}^{n}\omega_{i}D_{i})^{2}}M.
\]
\end{proof}
Under the instantaneous rate policy with parameter $M$, the expected
shipping cost within each cycle is $A_{D}+\mathrm{\mathbb{E}}[\sum_{i=1}^{n}c_{i}N_{i}(\tau_{M})]=A_{D}+\frac{M}{\sum_{i=1}^{n}\omega_{i}D_{i}}\sum_{i=1}^{n}c_{i}D_{i}$,
the expected total waiting cost within one cycle is $\frac{1}{2\sum_{i=1}^{n}\omega_{i}D_{i}}M^{2}-\frac{\sum_{i=1}^{n}\omega_{i}^{2}\sigma_{i}^{2}}{2(\sum_{i=1}^{n}\omega_{i}D_{i})^{2}}M$,
and the expected clearing cycle length is $\mathrm{\mathbb{E}}[\tau_{M}]=\frac{M}{\sum_{i=1}^{n}\omega_{i}D_{i}}$.
Therefore, by the renewal reward theorem, we can obtain the long-run
average cost
\begin{eqnarray*}
AC^{IRP}(M) & = & \frac{A_{D}+\frac{M}{\sum_{i=1}^{n}\omega_{i}D_{i}}\sum_{i=1}^{n}c_{i}D_{i}+\frac{1}{2\sum_{i=1}^{n}\omega_{i}D_{i}}M^{2}-\frac{\sum_{i=1}^{n}\omega_{i}^{2}\sigma_{i}^{2}}{2(\sum_{i=1}^{n}\omega_{i}D_{i})^{2}}M}{\frac{M}{\sum_{i=1}^{n}\omega_{i}D_{i}}}\\
 & = & \frac{A_{D}\sum_{i=1}^{n}\omega_{i}D_{i}}{M}+\frac{1}{2}M+\sum_{i=1}^{n}c_{i}D_{i}-\frac{\sum_{i=1}^{n}\omega_{i}^{2}\sigma_{i}^{2}}{2\sum_{i=1}^{n}\omega_{i}D_{i}}.
\end{eqnarray*}
Minimizing $AC^{IRP}(M)$, we get the optimal threshold value $M^{*}=\sqrt{2A_{D}\sum_{i=1}^{n}\omega_{i}D_{i}},$
and the minimized average cost under the instantaneous rate policy
\begin{equation}
AC^{IRP}(M^{*})=\sqrt{2A_{D}\sum_{i=1}^{n}\omega_{i}D_{i}}+\sum_{i=1}^{n}c_{i}D_{i}-\frac{\sum_{i=1}^{n}\omega_{i}^{2}\sigma_{i}^{2}}{2\sum_{i=1}^{n}\omega_{i}D_{i}}.\label{minimizedACunderIRP}
\end{equation}

\begin{table}[h]
\centering{}\hspace*{-0.5in} %
\begin{tabular}{llll}
\hline
\vspace{-0.1in}
  &  &  & \tabularnewline
$QP$  & $:$  & {\Large{}{}$\;\sqrt{2A_{D}\sum_{i=1}^{n}\omega_{i}D_{i}}+\sum_{i=1}^{n}c_{i}D_{i}-\sum_{i=1}^{n}\omega_{i}(\frac{\sigma_{i}^{2}}{D}-\frac{D_{i}\sigma^{2}}{2D^{2}})$}  & \vspace{0.05in}
 \tabularnewline
$TP$  & $:$  & {\Large{}{}$\;\sqrt{2A_{D}\sum_{i=1}^{n}\omega_{i}D_{i}}+\sum_{i=1}^{n}c_{i}D_{i}$}  & \vspace{0.05in}
 \tabularnewline
$IRP$  & $:$  & {\Large{}{}$\;\sqrt{2A_{D}\sum_{i=1}^{n}\omega_{i}D_{i}}+\sum_{i=1}^{n}c_{i}D_{i}-\frac{\sum_{i=1}^{n}\omega_{i}^{2}\sigma_{i}^{2}}{2\sum_{i=1}^{n}\omega_{i}D_{i}}$}  & \vspace{0.1in}
 \tabularnewline
\hline
\end{tabular}\caption{Summary of the minimized average cost under different policies.}
\label{minAC}
\end{table}

Recall the minimized average cost under the optimal time policy (\ref{minimizedACunderTP}),
we have $AC^{TP}(T^{*})>AC^{IRP}(M^{*})$. Further, recall the minimized
average cost under the optimal quantity policy (\ref{minimizedACunderQP}),
we have
\begin{eqnarray*}
AC^{QP}(Q^{*})-AC^{IRP}(M^{*})=\frac{\sum_{k=1}^{n}\sigma_{k}^{2}\left[\omega_{k}D-\sum_{i=1}^{n}\omega_{i}D_{i}\right]^{2}}{2D^{2}\sum_{i=1}^{n}\omega_{i}D_{i}}\geq0.
\end{eqnarray*}
Therefore, the optimal IRP achieves lower average cost than both of
the optimal quantity policy and the optimal time policy. Combined
with Theorem \ref{jointoptimality}, we have the following result.
\begin{thm}
The optimal IRP achieves less average cost than all $(T_{Q}+T)$ type
policies.
\end{thm}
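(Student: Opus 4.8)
The plan is to reduce the statement to two things already in hand plus one appeal to the preceding theorem. First I would recall that Theorem~\ref{jointoptimality} established that the jointly optimal $(T_{Q}+T)$-policy is either the optimal quantity policy $QP$ (with minimized cost $AC^{QP}(Q^{*})$ given in \eqref{minimizedACunderQP}) or the optimal time policy $TP$ (with minimized cost $AC^{TP}(T^{*})$ given in \eqref{minimizedACunderTP}), depending on the sign of $\sum_{i=1}^{n}\omega_{i}(2D\sigma_{i}^{2}-D_{i}\sigma^{2})$. Hence it suffices to show that $AC^{IRP}(M^{*})$ is no larger than \emph{both} of these two quantities; since in either case the jointly optimal $(T_{Q}+T)$-cost equals the larger-in-context of $AC^{QP}(Q^{*})$ and $AC^{TP}(T^{*})$ (and in fact equals whichever is smaller of the two by Theorem~\ref{QPvsTP}, but that refinement is not needed), dominating both is enough.

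Second, I would verify the two inequalities directly from the closed forms in Table~\ref{minAC}. For the comparison with $TP$: subtracting \eqref{minimizedACunderIRP} from \eqref{minimizedACunderTP} leaves exactly $\frac{\sum_{i=1}^{n}\omega_{i}^{2}\sigma_{i}^{2}}{2\sum_{i=1}^{n}\omega_{i}D_{i}}$, which is nonnegative because every $\omega_i,D_i>0$ and every $\sigma_i>0$; so $AC^{TP}(T^{*})\ge AC^{IRP}(M^{*})$. For the comparison with $QP$: the difference $AC^{QP}(Q^{*})-AC^{IRP}(M^{*})$ must be shown to equal $\frac{\sum_{k=1}^{n}\sigma_{k}^{2}\left[\omega_{k}D-\sum_{i=1}^{n}\omega_{i}D_{i}\right]^{2}}{2D^{2}\sum_{i=1}^{n}\omega_{i}D_{i}}$, which is manifestly $\ge 0$. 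This is the one genuine computation: one cancels the common term $\sqrt{2A_{D}\sum_{i=1}^{n}\omega_{i}D_{i}}+\sum_{i=1}^{n}c_{i}D_{i}$, writes the remaining difference $-\sum_{i=1}^{n}\omega_{i}(\frac{\sigma_{i}^{2}}{D}-\frac{D_{i}\sigma^{2}}{2D^{2}})+\frac{\sum_{i=1}^{n}\omega_{i}^{2}\sigma_{i}^{2}}{2\sum_{i=1}^{n}\omega_{i}D_{i}}$ over the common denominator $2D^{2}\sum_{i=1}^{n}\omega_{i}D_{i}$, and checks that the numerator collapses to the displayed sum of squares — expanding $\sum_{k}\sigma_{k}^{2}(\omega_{k}D-\sum_{i}\omega_{i}D_{i})^{2}$ and matching the $\omega_k^2 D^2\sigma_k^2$, cross, and constant terms against the three contributions.

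Finally I would assemble: the optimal IRP beats $QP$ and beats $TP$, therefore it beats whichever of the two is the jointly optimal $(T_{Q}+T)$-policy by Theorem~\ref{jointoptimality}, and a fortiori it beats every $(T_{Q}+T)$-policy since none of them costs less than the jointly optimal one. The main obstacle is purely bookkeeping: carrying out the algebraic identification of $AC^{QP}(Q^{*})-AC^{IRP}(M^{*})$ with the Cauchy--Schwarz-type expression without sign errors in the $\sigma^2=\sum_i\sigma_i^2$ versus $\sigma_i^2$ distinction; conceptually there is no difficulty, as the result is an immediate corollary of Theorem~\ref{jointoptimality} together with the two cost comparisons already displayed in the text preceding the statement.
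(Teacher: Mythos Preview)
Your proposal is correct and follows essentially the same approach as the paper: the paper establishes the two inequalities $AC^{TP}(T^{*})>AC^{IRP}(M^{*})$ and $AC^{QP}(Q^{*})-AC^{IRP}(M^{*})=\frac{\sum_{k}\sigma_{k}^{2}[\omega_{k}D-\sum_{i}\omega_{i}D_{i}]^{2}}{2D^{2}\sum_{i}\omega_{i}D_{i}}\ge 0$ in the text immediately preceding the theorem, and then states the result as an immediate consequence of these together with Theorem~\ref{jointoptimality}. Your write-up is slightly more explicit about the algebraic verification of the $QP$ comparison, but the logical structure is identical.
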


\begin{rem}
In a stochastic clearing system with multiple input processes, under
a time policy, we do not need to track any process realization; under
a quantity policy, we only need to track the total input processes
as a whole; under an instantaneous rate policy, we need to track the
realization of each input process. In a stochastic dynamic system,
the optimal policy should be the one taking advantage of full information,
i.e, a closed-loop policy.
\end{rem}

\subsection{Optimality of IRP\label{subsec:Martingale-Argument-for IRP}}

In this subsection, we show that among a large class of renewal type
clearing policies, the optimal IRP achieves the least average cost.
The argument is based on the following result.
\begin{prop}
\label{keyequality}Let $\tau$ be a stopping time with finite second
moment, i.e., $\mathrm{\mathbb{E}}[\tau^{2}]<\infty$, it holds that
\end{prop}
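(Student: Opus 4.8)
The plan is to collapse the $n$ input streams into a single drifted Brownian motion and then re-use the machinery of Lemma~\ref{martingale} and Proposition~\ref{unifiedformula}. Put $X(t):=\sum_{i=1}^{n}\omega_{i}N_{i}(t)$, $\mu:=\sum_{i=1}^{n}\omega_{i}D_{i}>0$ and $\nu^{2}:=\sum_{i=1}^{n}\omega_{i}^{2}\sigma_{i}^{2}$. Since $B_{1},\dots,B_{n}$ are independent standard Brownian motions, $X(t)=\mu t+\sum_{i=1}^{n}\omega_{i}\sigma_{i}B_{i}(t)=\mu t+\nu\tilde B(t)$ where $\tilde B:=\nu^{-1}\sum_{i=1}^{n}\omega_{i}\sigma_{i}B_{i}$ is a standard Brownian motion; in particular $X$ is adapted to $\{\mathcal G_{t}\}$ and, for $s<t$, $X(t)-X(s)$ is independent of $\mathcal G_{s}$ with mean $\mu(t-s)$ and variance $\nu^{2}(t-s)$. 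The conditional-expectation computation in the proof of Lemma~\ref{martingale} therefore goes through verbatim with $(N_{i},D_{i},\sigma_{i}^{2})$ replaced by $(X,\mu,\nu^{2})$, so that
\[
\Bigl\{\,\int_{0}^{t}X(u)\,du-\frac{1}{2\mu}X^{2}(t)+\frac{\nu^{2}}{2\mu^{2}}X(t)\,\Bigr\}_{t\ge0}
\]
is a $\{\mathcal G_{t}\}$-martingale.

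Next I would run the argument of Proposition~\ref{unifiedformula} for this aggregated martingale: the drift contribution $\tfrac12\mu(\tau\wedge t)^{2}$ is dominated by $\tfrac12\mu\tau^{2}\in L^{1}$ (this is where $\mathbb{E}[\tau^{2}]<\infty$ enters), while the Brownian contribution $\nu\int_{0}^{\tau\wedge t}\tilde B(u)\,du$ is controlled by the same H\"older/Markov estimate applied there to $B_{i}$, so the stopped martingale is uniformly integrable and optional stopping gives
\[
\mathbb{E}\Bigl[\int_{0}^{\tau}X(u)\,du\Bigr]=\frac{1}{2\mu}\mathbb{E}[X^{2}(\tau)]-\frac{\nu^{2}}{2\mu^{2}}\mathbb{E}[X(\tau)].
\]
Because $X(t)-\mu t$ is a mean-zero $\{\mathcal G_{t}\}$-martingale and $\mathbb{E}[\tau]<\infty$, Wald's identity (equivalently, summing $\mathbb{E}[N_{i}(\tau)]=D_{i}\mathbb{E}[\tau]$ with weights $\omega_{i}$) gives $\mathbb{E}[X(\tau)]=\mu\mathbb{E}[\tau]$. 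Substituting this and unwinding the definitions, using $\int_{0}^{\tau}X(u)\,du=\sum_{i=1}^{n}\omega_{i}\int_{0}^{\tau}N_{i}(u)\,du$ and $X(\tau)=\sum_{i=1}^{n}\omega_{i}N_{i}(\tau)$, yields the claimed identity
\[
\mathbb{E}\Bigl[\sum_{i=1}^{n}\omega_{i}\int_{0}^{\tau}N_{i}(u)\,du\Bigr]=\frac{\mathbb{E}\bigl[\bigl(\sum_{i=1}^{n}\omega_{i}N_{i}(\tau)\bigr)^{2}\bigr]}{2\sum_{i=1}^{n}\omega_{i}D_{i}}-\frac{\sum_{i=1}^{n}\omega_{i}^{2}\sigma_{i}^{2}}{2\sum_{i=1}^{n}\omega_{i}D_{i}}\,\mathbb{E}[\tau].
\]
A self-contained alternative that avoids quoting Proposition~\ref{unifiedformula} for $X$ is to multiply the per-type identity $\mathbb{E}[\int_{0}^{\tau}N_{i}(u)\,du]=\tfrac12 D_{i}\mathbb{E}[\tau^{2}]+\sigma_{i}\mathbb{E}[\tau B_{i}(\tau)]$ by $\omega_{i}$, sum over $i$, and then use $\sum_{i=1}^{n}\omega_{i}\sigma_{i}\tau B_{i}(\tau)=\tau(X(\tau)-\mu\tau)$ together with the expansion of $\mathbb{E}[X^{2}(\tau)]$; the martingale identity of the first paragraph reconciles the two forms.

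The one step where real care is needed — and where I expect essentially all of the work to lie — is the passage from the martingale to its value at $\tau$: one must check that $X$ genuinely inherits the $\{\mathcal G_{t}\}$-independent-increments structure (this rests on the mutual independence of $B_{1},\dots,B_{n}$) and that the uniform-integrability estimate of Proposition~\ref{unifiedformula} survives the aggregation; both are routine but load-bearing. Everything downstream is elementary: Jensen gives $\mathbb{E}[(\sum_{i=1}^{n}\omega_{i}N_{i}(\tau))^{2}]\ge(\sum_{i=1}^{n}\omega_{i}D_{i})^{2}\mathbb{E}[\tau]^{2}$, whence by the identity the average cost of any renewal policy with $\mathbb{E}[\tau^{2}]<\infty$ is at least $\frac{A_{D}}{\mathbb{E}[\tau]}+\sum_{i=1}^{n}c_{i}D_{i}+\tfrac12\bigl(\sum_{i=1}^{n}\omega_{i}D_{i}\bigr)\mathbb{E}[\tau]-\frac{\sum_{i=1}^{n}\omega_{i}^{2}\sigma_{i}^{2}}{2\sum_{i=1}^{n}\omega_{i}D_{i}}$, which is minimized over $\mathbb{E}[\tau]>0$ at the value $AC^{IRP}(M^{*})$ of (\ref{minimizedACunderIRP}), with equality forced precisely when $\sum_{i=1}^{n}\omega_{i}N_{i}(\tau)$ is a.s.\ constant — i.e.\ under an IRP — and $\mathbb{E}[\tau]=\sqrt{2A_{D}/\sum_{i=1}^{n}\omega_{i}D_{i}}$.
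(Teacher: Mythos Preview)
Your proposal is correct. Your primary route---aggregating to the single drifted Brownian motion $X(t)=\sum_i\omega_iN_i(t)=\mu t+\nu\tilde B(t)$ and then invoking Lemma~\ref{martingale}/Proposition~\ref{unifiedformula} for $X$ directly---differs from the paper's proof, which instead sums the per-type identity $\mathbb{E}[\int_0^\tau N_i(u)\,du]=\tfrac12 D_i\mathbb{E}[\tau^2]+\sigma_i\mathbb{E}[\tau B_i(\tau)]$ with weights $\omega_i$, separately expands $\mathbb{E}[(\sum_i\omega_iN_i(\tau))^2]$ as $(\sum_i\omega_iD_i)^2\mathbb{E}[\tau^2]+2(\sum_i\omega_iD_i)\sum_i\omega_i\sigma_i\mathbb{E}[\tau B_i(\tau)]+(\sum_i\omega_i^2\sigma_i^2)\mathbb{E}[\tau]$, and eliminates $\mathbb{E}[\tau^2]$ and the cross term algebraically. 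In fact you sketch exactly this as your ``self-contained alternative,'' so you cover the paper's argument as well. The aggregation route is cleaner and more conceptual (it is the same reduction the paper itself uses later in Lemma~\ref{inequ-multi}); the paper's route has the minor advantage that it does not require re-checking the $\{\mathcal G_t\}$-martingale and uniform-integrability properties for $X$, though as you note these follow verbatim from the independence of $B_1,\dots,B_n$. Your final paragraph, deriving the lower bound via Jensen and reading off equality precisely at the IRP, is not part of Proposition~\ref{keyequality} itself but is exactly the content of the paper's subsequent Theorem~\ref{IRPoptimalinAC}.
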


\begin{eqnarray}
 &  & \mathbb{E}\left[\sum_{i=1}^{n}\int_{0}^{\tau}\omega_{i}N_{i}(u)du\right]=\frac{1}{2\sum_{i=1}^{n}\omega_{i}D_{i}}\mathrm{\mathbb{E}}[(\sum_{i=1}^{n}\omega_{i}N_{i}(\tau))^{2}]-\frac{\sum_{i=1}^{n}\omega_{i}^{2}\sigma_{i}^{2}}{2(\sum_{i=1}^{n}\omega_{i}D_{i})^{2}}\mathrm{\mathbb{E}}\left[\sum_{i=1}^{n}\omega_{i}N_{i}(\tau)\right].\label{CWD}
\end{eqnarray}

\begin{proof}
From Proposition \ref{unifiedformula}, we have $\mathbb{E}\left[\sum_{i=1}^{n}\int_{0}^{\tau}\omega_{i}N_{i}(u)du\right]=\frac{1}{2}\sum_{i=1}^{n}\omega_{i}D_{i}\mathrm{\mathbb{E}}\left[\tau^{2}\right]+\sum_{i=1}^{n}\omega_{i}\sigma_{i}\mathrm{\mathbb{E}}\left[\tau B_{i}(\tau)\right]$.
By a direct calculation, we have

\[
\mathrm{\mathbb{E}}[(\sum_{i=1}^{n}\omega_{i}N_{i}(\tau))^{2}]=(\sum_{i=1}^{n}\omega_{i}D_{i})^{2}\mathbb{E}\left[\tau^{2}\right]+2\sum_{i=1}^{n}\omega_{i}D_{i}\sum_{i=1}^{n}\omega_{i}\sigma_{i}\mathrm{\mathbb{E}}\left[\tau B_{i}(\tau)\right]+\sum_{i=1}^{n}\omega_{i}^{2}\sigma_{i}^{2}\mathbb{E}\left[\tau\right].
\]
Based on the above two observations, and $\mathrm{\mathbb{E}}\left[\sum_{i=1}^{n}\omega_{i}N_{i}(\tau)\right]=\sum_{i=1}^{n}\omega_{i}D_{i}\mathbb{E}\left[\tau\right]$,
we arrive at the conclusion.
\end{proof}
We provide the optimality of IRP in terms of average cost as follows.
\begin{thm}
\label{IRPoptimalinAC} Among all renewal type clearing policies with
which cycle times are of finite second moment, the optimal policy
in terms of average cost is the optimal IRP.
\end{thm}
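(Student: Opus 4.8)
The plan is to reduce the minimization of the average cost over the whole admissible class to an elementary one–dimensional minimization in the single scalar $\mathbb{E}[\tau]$, using Proposition \ref{keyequality} together with Jensen's inequality, and then to exhibit the optimal IRP as a policy that attains the resulting lower bound.

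First I would rewrite the average cost in a convenient form. Fix any renewal-type clearing policy with cycle $\tau$ satisfying $\mathbb{E}[\tau^{2}]<\infty$; then $\mathbb{E}[\tau]\le(\mathbb{E}[\tau^{2}])^{1/2}<\infty$. Since $\{N_{i}(t)-D_{i}t\}_{t\ge0}$ is a martingale, the optional stopping / uniform integrability argument used in the proof of Proposition \ref{unifiedformula} yields $\mathbb{E}[N_{i}(\tau)]=D_{i}\mathbb{E}[\tau]$, hence $\mathbb{E}[\sum_{i=1}^{n}c_{i}N_{i}(\tau)]=(\sum_{i=1}^{n}c_{i}D_{i})\mathbb{E}[\tau]$ and $\mathbb{E}[\sum_{i=1}^{n}\omega_{i}N_{i}(\tau)]=(\sum_{i=1}^{n}\omega_{i}D_{i})\mathbb{E}[\tau]$. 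Therefore
\[
AC=\frac{A_{D}}{\mathbb{E}[\tau]}+\sum_{i=1}^{n}c_{i}D_{i}+\frac{\mathbb{E}\bigl[\sum_{i=1}^{n}\int_{0}^{\tau}\omega_{i}N_{i}(u)du\bigr]}{\mathbb{E}[\tau]}.
\]
Next, substituting Proposition \ref{keyequality} and the identity $\mathbb{E}[\sum_{i=1}^{n}\omega_{i}N_{i}(\tau)]=(\sum_{i=1}^{n}\omega_{i}D_{i})\mathbb{E}[\tau]$ into the last term, and then applying Jensen's inequality $\mathbb{E}[(\sum_{i=1}^{n}\omega_{i}N_{i}(\tau))^{2}]\ge(\mathbb{E}[\sum_{i=1}^{n}\omega_{i}N_{i}(\tau)])^{2}=(\sum_{i=1}^{n}\omega_{i}D_{i})^{2}(\mathbb{E}[\tau])^{2}$, I obtain a lower bound on $AC$ that depends on the policy \emph{only through} $t:=\mathbb{E}[\tau]$:
\[
AC\ \ge\ \frac{A_{D}}{t}+\frac{t}{2}\sum_{i=1}^{n}\omega_{i}D_{i}+\sum_{i=1}^{n}c_{i}D_{i}-\frac{\sum_{i=1}^{n}\omega_{i}^{2}\sigma_{i}^{2}}{2\sum_{i=1}^{n}\omega_{i}D_{i}}\ =:\ g(t).
\]

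Then I would minimize $g$ over $t>0$ by elementary calculus: the minimizer is $t^{*}=\sqrt{2A_{D}/\sum_{i=1}^{n}\omega_{i}D_{i}}$ and $g(t^{*})=\sqrt{2A_{D}\sum_{i=1}^{n}\omega_{i}D_{i}}+\sum_{i=1}^{n}c_{i}D_{i}-\frac{\sum_{i=1}^{n}\omega_{i}^{2}\sigma_{i}^{2}}{2\sum_{i=1}^{n}\omega_{i}D_{i}}$, which is exactly $AC^{IRP}(M^{*})$ in (\ref{minimizedACunderIRP}). It then remains to verify that this bound is attained inside the admissible class. Under the IRP with parameter $M$ the process $\sum_{i=1}^{n}\omega_{i}N_{i}(t)$ has continuous paths, so there is no overshoot: $\sum_{i=1}^{n}\omega_{i}N_{i}(\tau_{M})=M$ almost surely. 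Hence the Jensen step above holds with equality, and by Lemma \ref{lap-weighted} we have $\mathbb{E}[\tau_{M}]=M/\sum_{i=1}^{n}\omega_{i}D_{i}$ with $\mathbb{E}[\tau_{M}^{2}]<\infty$, so the IRP is indeed admissible. Choosing $M=M^{*}=\sqrt{2A_{D}\sum_{i=1}^{n}\omega_{i}D_{i}}$ gives $\mathbb{E}[\tau_{M^{*}}]=t^{*}$, so the optimal IRP achieves $g(t^{*})$, i.e.\ the global lower bound, and is therefore optimal among all admissible policies.

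I do not expect a deep obstacle here once Proposition \ref{keyequality} is in hand: the key conceptual point is recognizing that, for a fixed $\mathbb{E}[\tau]$, the weighted waiting cost is minimized precisely when the terminal weighted quantity $\sum_{i=1}^{n}\omega_{i}N_{i}(\tau)$ is deterministic, which is the defining feature of the IRP (and the source of the zero-variance, equality case of Jensen). The only genuinely technical points to handle carefully are the optional-stopping justification of $\mathbb{E}[N_{i}(\tau)]=D_{i}\mathbb{E}[\tau]$ under $\mathbb{E}[\tau^{2}]<\infty$, and the path-continuity remark that rules out overshoot for $\tau_{M}$, without which the final inequality would not be tight.
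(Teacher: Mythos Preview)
Your proposal is correct and follows essentially the same route as the paper: apply Proposition~\ref{keyequality}, use the second-moment inequality $\mathbb{E}[(\sum_i\omega_iN_i(\tau))^{2}]\ge(\mathbb{E}[\sum_i\omega_iN_i(\tau)])^{2}$, and observe that the resulting lower bound depends on the policy only through $\mathbb{E}[\tau]$, with equality precisely when $\sum_i\omega_iN_i(\tau)$ is a.s.\ constant, i.e.\ under an IRP. Your version is somewhat more explicit than the paper's---you carry out the one-variable minimization of $g(t)$ and verify admissibility of $\tau_{M^{*}}$---whereas the paper stops once it has noted that, for fixed $\mathbb{E}[\tau]$, the lower bound is attained only by an IRP.
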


\begin{proof}
For a renewal type clearing policy with stopping time $\tau$ and
$\mathrm{\mathbb{E}}[\tau^{2}]<\infty$, using Proposition \ref{keyequality}
we have the average cost in the long run

\begin{eqnarray*}
 &  & \frac{A_{D}+\sum_{i=1}^{n}c_{i}\mathrm{\mathbb{E}}[N_{i}(\tau)]+\mathrm{\mathbb{E}}[\sum_{i=1}^{n}\int_{0}^{\tau}\omega_{i}N_{i}(u)du]}{\mathrm{\mathbb{E}}[\tau]}\\
= &  & \frac{A_{D}+\sum_{i=1}^{n}c_{i}\mathrm{\mathbb{E}}[N_{i}(\tau)]+\frac{1}{2\sum_{i=1}^{n}\omega_{i}D_{i}}\mathrm{\mathbb{E}}[(\sum_{i=1}^{n}\omega_{i}N_{i}(\tau))^{2}]-\frac{\sum_{i=1}^{n}\omega_{i}^{2}\sigma_{i}^{2}}{2(\sum_{i=1}^{n}\omega_{i}D_{i})^{2}}\mathrm{\mathbb{E}}[\sum_{i=1}^{n}\omega_{i}N_{i}(\tau)]}{\mathrm{\mathbb{E}}[\tau]}\\
\geq &  & \frac{A_{D}+\sum_{i=1}^{n}c_{i}\mathrm{\mathbb{E}}[N_{i}(\tau)]+\frac{1}{2\sum_{i=1}^{n}\omega_{i}D_{i}}\mathrm{\mathbb{E}}^{2}[\sum_{i=1}^{n}\omega_{i}N_{i}(\tau)]-\frac{\sum_{i=1}^{n}\omega_{i}^{2}\sigma_{i}^{2}}{2(\sum_{i=1}^{n}\omega_{i}D_{i})^{2}}\mathrm{\mathbb{E}}[\sum_{i=1}^{n}\omega_{i}N_{i}(\tau)]}{\mathrm{\mathbb{E}}[\tau]}
\end{eqnarray*}
where the equality follows from (\ref{CWD}), and the last inequality
comes from $\mathrm{\mathbb{E}}[(\sum_{i=1}^{n}\omega_{i}N_{i}(\tau))^{2}]\geq\mathrm{\mathbb{E}}^{2}[\sum_{i=1}^{n}\omega_{i}N_{i}(\tau)]$,
and the equality in the last inequality holds if and only if $\sum_{i=1}^{n}\omega_{i}N_{i}(\tau)$
is a constant a.s. Also, we notice that, if we fix $\mathrm{\mathbb{E}}[\tau]$,
the numerator of last term in the formula is also fixed. Therefore,
we arrive at our conclusion.
\end{proof}
\begin{rem}
Theorem \ref{IRPoptimalinAC} is a culmination in this section. At
the first stage, we show that among all $(T_{Q}+T)$ type polices,
either the optimal quantity policy or the optimal time policy is the
best one in terms of average cost, depending on whether $\sum_{i=1}^{n}\omega_{i}(2D\sigma_{i}^{2}-D_{i}\sigma^{2})$
is positive or negative. Later on, we demonstrate that the optimal
IRP achieves less average cost than all $(T_{Q}+T)$ type polices.
Finally, we prove the optimal IRP achieves the least average cost,
among all renewal type clearing policies with which cycle times are
of finite second moment.
\end{rem}

\section{Service Performance Model\label{sec:Service-Performance-Model}}

The second objective of this work is to analyze and optimize a service
criterion. We propose measuring performance with the average weighted
delay rate. Recall its definition in Subsection \ref{subsec:Service-Performance-Criterion},
under a renewal type clearing policy with clearing cycle $\tau$,
\begin{eqnarray*}
AWDR=\frac{\mathrm{\mathbb{E}}[W]}{\mathrm{\mathbb{E}}[L]}=\frac{\mathrm{\mathbb{E}}[\sum_{i=1}^{n}\int_{0}^{\tau}\omega_{i}N_{i}(u)du]}{\mathrm{\mathbb{E}}[\tau]}
\end{eqnarray*}
We index $AWDR$, $W$, and $L$ by policy type as needed. Recalling
(\ref{CWD}), we have
\begin{eqnarray*}
AWDR_{\tau}=\frac{\frac{1}{2\sum_{i=1}^{n}\omega_{i}D_{i}}\mathrm{\mathbb{E}}[(\sum_{i=1}^{n}\omega_{i}N_{i}(\tau))^{2}]-\frac{\sum_{i=1}^{n}\omega_{i}^{2}\sigma_{i}^{2}}{2(\sum_{i=1}^{n}\omega_{i}D_{i})^{2}}\mathrm{\mathbb{E}}[\sum_{i=1}^{n}\omega_{i}N_{i}(\tau)]}{\mathrm{\mathbb{E}}[\tau]},
\end{eqnarray*}
which provides a unified formula to calculate the average weighted
delay rate under any renewal-type clearing policy with which cycle
time is of finite second moment.

From the above discussion, we can deduce AWDR for any renewal-type
clearing policy. We focus on instantaneous rate policy (IRP), time
based policy (TP), and instantaneous rate hybrid policy (IRHP). Instantaneous
rate hybrid policy is a combination of IRP and TP. Stated formally,
under IRHP with parameter $M$ and $T$, the goal is to implement
an instantaneous rate policy with parameter $M$. However, if until
time $T$ since the last shipment epoch, $\sum_{i=1}^{n}\omega_{i}N_{i}(t)$
has not reached $M$, then a shipment decision is made. In Subsection
\ref{subsec:Martingale-Argument-for IRP}, we already justified that
the instantaneous rate policy is superior to the other renewal type
clearing policies in terms of average cost. However, it could not
guarantee a maximum waiting time for the customers. In contrast, the
instantaneous rate hybrid policy sets a maximum waiting time for the
customers.

In the following, we calculate AWDR for the three classes of clearing
policies, and provide some comparative results in terms of AWDR.

1. IRP with parameter $M$: $\tau=\tau_{M}$, $\sum_{i=1}^{n}\omega_{i}N_{i}(\tau_{M})=M$.
So,
\begin{eqnarray*}
 &  & \mathrm{\mathbb{E}}[W_{IRP}]=\frac{1}{2\sum_{i=1}^{n}\omega_{i}D_{i}}M^{2}-\frac{\sum_{i=1}^{n}\omega_{i}^{2}\sigma_{i}^{2}}{2(\sum_{i=1}^{n}\omega_{i}D_{i})^{2}}M,\\
 &  & \mathrm{\mathbb{E}}[L_{IRP}]=\mathrm{\mathbb{E}}[\tau_{M}]=\frac{M}{\sum_{i=1}^{n}\omega_{i}D_{i}}.
\end{eqnarray*}

2. TP with parameter $T$: $\tau=T$, and
\[
\sum_{i=1}^{n}\omega_{i}N_{i}(T)\sim Normal(\sum_{i=1}^{n}\omega_{i}D_{i}T,\sum_{i=1}^{n}\omega_{i}^{2}\sigma_{i}^{2}T).
\]
So,
\begin{eqnarray*}
\mathrm{\mathbb{E}}[W_{TP}]=\frac{1}{2}\sum_{i=1}^{n}\omega_{i}D_{i}T^{2},\mathrm{\mathbb{E}}[L_{TP}]=T.
\end{eqnarray*}

3. IRHP with parameters $M$ and $T$: $\tau=\tau_{M}\wedge T$.
\begin{eqnarray*}
 &  & \mathrm{\mathbb{E}}[W_{IRHP}]=\frac{1}{2\sum_{i=1}^{n}\omega_{i}D_{i}}\mathrm{\mathbb{E}}\left[(\sum_{i=1}^{n}\omega_{i}N_{i}(\tau_{M}\wedge T))^{2}\right]-\frac{\sum_{i=1}^{n}\omega_{i}^{2}\sigma_{i}^{2}}{2(\sum_{i=1}^{n}\omega_{i}D_{i})^{2}}\mathrm{\mathbb{E}}\left[\sum_{i=1}^{n}\omega_{i}N_{i}(\tau_{M}\wedge T)\right],\\
 &  & \mathrm{\mathbb{E}}[L_{IRHP}]=\mathrm{\mathbb{E}}[\tau_{M}\wedge T].
\end{eqnarray*}

In Table \ref{ExprofAWDR}, we summarize the AWDR for different clearing
policies. The goal of this section is to provide some comparative
results in terms of AWDR, which will be stated in Subsection \ref{subsec:Comparison-of-AWDR}.
In particular, we are interested in comparing IRHP and time policy,
in terms of AWDR. To obtain this comparative result, we need an inequality
which is presented in Subsection \ref{subsec:A-Key-Inequality}.

\begin{table}
\centering \hspace*{-0.5in} %
\begin{tabular}{llll}
\hline
\vspace{-0.1in}
  &  &  & \tabularnewline
$AWDR_{\tau}$  & $=$  & {\Large{}{}$\;\frac{\frac{1}{2\sum_{i=1}^{n}\omega_{i}D_{i}}\mathrm{\mathbb{E}}[(\sum_{i=1}^{n}\omega_{i}N_{i}(\tau))^{2}]-\frac{\sum_{i=1}^{n}\omega_{i}^{2}\sigma_{i}^{2}}{2(\sum_{i=1}^{n}\omega_{i}D_{i})^{2}}\mathrm{\mathbb{E}}[\sum_{i=1}^{n}\omega_{i}N_{i}(\tau)]}{\mathrm{\mathbb{E}}[\tau]}$}  & \vspace{0.05in}
 \tabularnewline
$AWDR_{IRP}$  & $=$  & {\Large{}{}$\;\frac{\frac{1}{2\sum_{i=1}^{n}\omega_{i}D_{i}}M^{2}-\frac{\sum_{i=1}^{n}\omega_{i}^{2}\sigma_{i}^{2}}{2(\sum_{i=1}^{n}\omega_{i}D_{i})^{2}}M}{\frac{M}{\sum_{i=1}^{n}\omega_{i}D_{i}}}=\frac{M-\frac{\sum_{i=1}^{n}\omega_{i}^{2}\sigma_{i}^{2}}{\sum_{i=1}^{n}\omega_{i}D_{i}}}{2}$}  & \vspace{0.05in}
 \tabularnewline
$AWDR_{TP}$  & $=$  & {\Large{}{}$\;\frac{\frac{1}{2}\sum_{i=1}^{n}\omega_{i}D_{i}T^{2}}{T}=\frac{\sum_{i=1}^{n}\omega_{i}D_{i}T}{2}$}  & \vspace{0.05in}
 \tabularnewline
$AWDR_{IRHP}$  & $=$  & {\Large{}{}$\;\frac{\frac{1}{2\sum_{i=1}^{n}\omega_{i}D_{i}}\mathrm{\mathbb{E}}[(\sum_{i=1}^{n}\omega_{i}N_{i}(\tau_{M}\wedge T))^{2}]-\frac{\sum_{i=1}^{n}\omega_{i}^{2}\sigma_{i}^{2}}{2(\sum_{i=1}^{n}\omega_{i}D_{i})^{2}}\mathrm{\mathbb{E}}[\sum_{i=1}^{n}\omega_{i}N_{i}(\tau_{M}\wedge T)]}{\mathrm{\mathbb{E}}[\tau_{M}\wedge T]}$}  & \vspace{0.1in}
 \tabularnewline
\hline
\end{tabular}\caption{Summary of the Expressions of $AWDR$.}
\label{ExprofAWDR}
\end{table}

\subsection{A Key Inequality\label{subsec:A-Key-Inequality}}

Next lemma ables us to establish a comparative result between IRHP
and TP in terms of AWDR. We use the new notation $(x)_{+}=\max(x,0)$.
\begin{lem}
\label{inequ-single} Let $N(t)=Dt+\sigma B(t)$ be a Brownian motion
with drift and denote its hitting times $\tau_{q}=\min\{t:\,N(t)=q\}$
for $q>0$. Fix $T>0$, then
\begin{eqnarray*}
 &  & \frac{\sigma^{2}}{D}\,\mathrm{\mathbb{E}}[N(\tau_{q}\wedge T)]-\mathbb{VAR}[N(\tau_{q}\wedge T)]\\
= &  & D^{2}\left(\mathbb{VAR}[\tau_{q}\wedge T]+2\mathrm{\mathbb{E}}[(\tau_{q}-T)_{+}]\mathrm{\mathbb{E}}[(T-\tau_{q})_{+}]\right)>0.
\end{eqnarray*}
\end{lem}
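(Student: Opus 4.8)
The plan is to express everything in terms of the moments of the bounded stopping time $S:=\tau_q\wedge T$ together with the single quantity $\mathbb{E}[S\,N(S)]$, which is the only place where the law of $\tau_q$ really enters. Since $0\le S\le T$ is bounded, optional stopping applies freely to the continuous martingales $N(t)-Dt$ and $(N(t)-Dt)^2-\sigma^2 t=\sigma^2(B(t)^2-t)$, giving $\mathbb{E}[N(S)]=D\,\mathbb{E}[S]$ and $\mathbb{E}[(N(S)-DS)^2]=\sigma^2\mathbb{E}[S]$. Writing $N(S)=DS+\sigma B(S)$, expanding the variance, and eliminating $\sigma B(S)=N(S)-DS$, one checks that
\[
\frac{\sigma^2}{D}\,\mathbb{E}[N(S)]-\mathbb{VAR}[N(S)]=D^2\mathbb{E}[S^2]+D^2(\mathbb{E}[S])^2-2D\,\mathbb{E}[S\,N(S)].
\]
On the other hand, the pathwise identities $(T-\tau_q)_+=T-S$ and $(\tau_q-T)_+=\tau_q-S$, together with $\mathbb{E}[\tau_q]=q/D$ (the $n=1$ case of Lemma~\ref{lap-quantity}), turn the right-hand side of the lemma into $D^2\mathbb{E}[S^2]+D^2(\mathbb{E}[S])^2+2DqT-2D(q+DT)\mathbb{E}[S]$. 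Comparing the two, the lemma reduces to the single clean identity $\mathbb{E}[S\,N(S)]=(q+DT)\,\mathbb{E}[S]-qT$.

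To prove this identity I compute $\mathbb{E}[S\,N(S)]$ directly by conditioning on whether the threshold is hit before time $T$. Since $N$ has continuous paths, $N(\tau_q)=q$, hence $S\,N(S)=q\,\tau_q\,1_{\{\tau_q\le T\}}+T\,N(T)\,1_{\{\tau_q>T\}}$. For the first term, the identity $S=\tau_q\,1_{\{\tau_q\le T\}}+T\,1_{\{\tau_q>T\}}$ gives $\mathbb{E}[\tau_q\,1_{\{\tau_q\le T\}}]=\mathbb{E}[S]-T\,\mathbb{P}(\tau_q>T)$. For the second term, the pathwise identity $N(T)-N(S)=(N(T)-q)\,1_{\{\tau_q\le T\}}$, combined with $\mathbb{E}[N(T)]=DT$ and $\mathbb{E}[N(S)]=D\,\mathbb{E}[S]$ from the first paragraph, gives $\mathbb{E}[N(T)\,1_{\{\tau_q>T\}}]=D\,\mathbb{E}[S]-q\,\mathbb{P}(\tau_q\le T)$. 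Adding the two contributions, the probability terms combine into $-qT$ and everything collapses to $\mathbb{E}[S\,N(S)]=(q+DT)\,\mathbb{E}[S]-qT$, exactly the identity needed.

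Strict positivity is then immediate: $\mathbb{E}[(\tau_q-T)_+]\,\mathbb{E}[(T-\tau_q)_+]\ge0$ since each factor is nonnegative, while $\mathbb{VAR}[\tau_q\wedge T]>0$ because $\tau_q$ has the inverse Gaussian law (Lemma~\ref{lap-quantity}), absolutely continuous with support $(0,\infty)$, so $\tau_q\wedge T$ is non-degenerate. The main obstacle is the evaluation of $\mathbb{E}[S\,N(S)]$: the generic optional-stopping identities for drifted Brownian motion are circular on their own (they merely reproduce the left-hand side), so one must inject information tying $\tau_q$ to the deterministic cutoff $T$, and the device that makes it work is playing $\mathbb{E}[N(T)]=DT$ off against $\mathbb{E}[N(S)]=D\,\mathbb{E}[S]$ so that the unknown $\mathbb{P}(\tau_q\le T)$ cancels; after that only the elementary value $\mathbb{E}[\tau_q]=q/D$ is used. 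Finiteness of all the expectations involved is automatic from $S\le T$, and $\tau_q<\infty$ a.s. (positive drift) legitimizes $N(\tau_q)=q$, so no further technical care is required.
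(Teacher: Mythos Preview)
Your proof is correct and follows the same overall architecture as the paper's: both reduce the left-hand side via optional stopping for $N(t)-Dt$ and $(N(t)-Dt)^2-\sigma^2 t$ to the expression $D^2\mathbb{E}[S^2]+D^2(\mathbb{E}[S])^2-2D\,\mathbb{E}[S\,N(S)]$ with $S=\tau_q\wedge T$, and both then evaluate $\mathbb{E}[S\,N(S)]$ by splitting on $\{\tau_q\le T\}$ and using $N(\tau_q)=q$.

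The one genuine difference is in how $\mathbb{E}[S\,N(S)]$ is computed. The paper invokes the strong Markov property at $\tau_q$ to obtain $\mathbb{E}[(N(T)-N(\tau_q))1_{\tau_q\le T}]=D\,\mathbb{E}[(T-\tau_q)_+]$, and then carries the expansion in the variables $\mathbb{E}[(T-\tau_q)_+]$, $\mathbb{E}[(T-\tau_q)_+^2]$, $\mathbb{E}[T-\tau_q]$ through to the end. You instead isolate the clean target identity $\mathbb{E}[S\,N(S)]=(q+DT)\,\mathbb{E}[S]-qT$ and prove it by playing $\mathbb{E}[N(T)]=DT$ off against $\mathbb{E}[N(S)]=D\,\mathbb{E}[S]$ via the pathwise relation $N(T)-N(S)=(N(T)-q)1_{\{\tau_q\le T\}}$, so the unknown $\mathbb{P}(\tau_q\le T)$ cancels and the strong Markov property is never explicitly needed. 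This is a mild but pleasant simplification: it replaces a conditional-expectation step by two applications of optional stopping for bounded stopping times, and it makes transparent that the only distributional input about $\tau_q$ beyond boundedness of $S$ is $\mathbb{E}[\tau_q]=q/D$.
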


\begin{proof}
First, since $N(t)-Dt$ and $(N(t)-Dt)^{2}-\sigma^{2}t$ are two martingales,
then we have by the martingale stopping theorem,

\begin{equation}
\mathrm{\mathbb{E}}[N(\tau_{q}\wedge T)]=D\mathrm{\mathbb{E}}[\tau_{q}\wedge T],\label{Ward's first}
\end{equation}
and

\begin{equation}
\mathrm{\mathbb{E}}[(N(\tau_{q}\wedge T)-D(\tau_{q}\wedge T))^{2}]=\sigma^{2}\mathrm{\mathbb{E}}[\tau_{q}\wedge T]\label{Ward's second}
\end{equation}

Using (\ref{Ward's first}) (\ref{Ward's second}), and then simplifying,
\begin{eqnarray}
 &  & \frac{\sigma^{2}}{D}\,\mathrm{\mathbb{E}}[N(\tau_{q}\wedge T)]-\mathbb{VAR}[N(\tau_{q}\wedge T)]\nonumber \\
= &  & \sigma^{2}\,\mathrm{\mathbb{E}}[\tau_{q}\wedge T]-\mathbb{VAR}[N(\tau_{q}\wedge T)]\nonumber \\
= &  & \mathrm{\mathbb{E}}[(N(\tau_{q}\wedge T)-D(\tau_{q}\wedge T))^{2}]-\mathrm{\mathbb{E}}[(N(\tau_{q}\wedge T))^{2}]+D^{2}\mathrm{\mathbb{E}}^{2}[\tau_{q}\wedge T]\nonumber \\
= &  & D^{2}\mathrm{\mathbb{E}}[(\tau_{q}\wedge T)^{2}]+D^{2}\mathrm{\mathbb{E}}^{2}[\tau_{q}\wedge T]-2D\mathrm{\mathbb{E}}[(\tau_{q}\wedge T)N(\tau_{q}\wedge T)].\label{key}
\end{eqnarray}
Next,
\begin{eqnarray}
\mathrm{\mathbb{E}}[\tau_{q}\wedge T]=T-\mathrm{\mathbb{E}}[(T-\tau_{q})1_{\tau_{q}\le T}]=T-\mathrm{\mathbb{E}}[(T-\tau_{q})_{+}].\label{key1}
\end{eqnarray}
Likewise,
\begin{eqnarray}
\mathrm{\mathbb{E}}[(\tau_{q}\wedge T)^{2}] & = & T^{2}-\mathrm{\mathbb{E}}[(T^{2}-\tau_{q}^{2})1_{\tau_{q}\le T}]=T^{2}-\mathrm{\mathbb{E}}[(T+\tau_{q})(T-\tau_{q})_{+}]\nonumber \\
 & = & T^{2}-2T\mathrm{\mathbb{E}}[(T-\tau_{q})_{+}]+\mathrm{\mathbb{E}}[(T-\tau_{q})_{+}^{2}],\label{key2}
\end{eqnarray}
having noted that $T-\tau_{q}=(T-\tau_{q})_{+}-(\tau_{q}-T)_{+}$
and $(T-\tau_{q})_{+}(\tau_{q}-T)_{+}=0$. Applying the strong Markov
property,
\begin{eqnarray}
 & \mathrm{\mathbb{E}}[(\tau_{q}\wedge T)N(\tau_{q}\wedge T)] & =\mathrm{\mathbb{E}}[TN(T)+(q\tau_{q}-TN(T))1_{\tau_{q}\le T}]\nonumber \\
 &  & =DT^{2}+\mathrm{\mathbb{E}}[(q(\tau_{q}-T)-T(N(T)-N(\tau_{q})))1_{\tau_{q}\le T}]\nonumber \\
 &  & =DT^{2}+\mathrm{\mathbb{E}}[(q(\tau_{q}-T)-DT(T-\tau_{q}))1_{\tau_{q}\le T}]\nonumber \\
 &  & =DT^{2}-D\mathrm{\mathbb{E}}[(T+\mathrm{\mathbb{E}}[\tau_{q}])(T-\tau_{q})_{+}]\nonumber \\
 &  & =DT^{2}-2DT\mathrm{\mathbb{E}}[(T-\tau_{q})_{+}]+D\mathrm{\mathbb{E}}[T-\tau_{q}]\mathrm{\mathbb{E}}[(T-\tau_{q})_{+}].\label{key3}
\end{eqnarray}
Putting (\ref{key1}) (\ref{key2})(\ref{key3}) into (\ref{key}),
\begin{eqnarray*}
 &  & \frac{\sigma^{2}}{D}\,\mathrm{\mathbb{E}}[N(\tau_{q}\wedge T)]-\mathbb{VAR}[N(\tau_{q}\wedge T)]\\
= &  & D^{2}(\mathrm{\mathbb{E}}[(T-\tau_{q})_{+}^{2}]+\mathrm{\mathbb{E}}^{2}[(T-\tau_{q})_{+}]-2\mathrm{\mathbb{E}}[T-\tau_{q}]\mathrm{\mathbb{E}}[(T-\tau_{q})_{+}])\\
= &  & D^{2}(\mathrm{\mathbb{E}}[(T-\tau_{q})_{+}^{2}]-\mathrm{\mathbb{E}}^{2}[(T-\tau_{q})_{+}]+2\mathrm{\mathbb{E}}[(\tau_{q}-T)_{+}]\mathrm{\mathbb{E}}[(T-\tau_{q})_{+}])\\
= &  & D^{2}(\mathbb{VAR}[(T-\tau_{q})_{+}]+2\mathrm{\mathbb{E}}[(\tau_{q}-T)_{+}]\mathrm{\mathbb{E}}[(T-\tau_{q})_{+}])\\
= &  & D^{2}(\mathbb{VAR}[\tau_{q}\wedge T]+2\mathrm{\mathbb{E}}[(\tau_{q}-T)_{+}]\mathrm{\mathbb{E}}[(T-\tau_{q})_{+}])>0.
\end{eqnarray*}

Based on Lemma \ref{inequ-single}, we immediately obtain the next
result, which will play an important role in establishing a comparative
result between IRHP and time policy in terms AWDR in Theorem \ref{WHP and TP}.
\end{proof}
\begin{lem}
\label{inequ-multi} Fix $T>0$, then
\begin{eqnarray*}
 &  & \frac{\sum_{i=1}^{n}\omega_{i}^{2}\sigma_{i}^{2}}{\sum_{i=1}^{n}\omega_{i}D_{i}}\mathrm{\mathbb{E}}[\sum_{i=1}^{n}\omega_{i}N_{i}(\tau_{M}\wedge T)]-\mathbb{VAR}[\sum_{i=1}^{n}\omega_{i}N_{i}(\tau_{M}\wedge T)]>0.
\end{eqnarray*}
\end{lem}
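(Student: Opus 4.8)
The plan is to reduce this multi-input inequality to the single-input inequality of Lemma~\ref{inequ-single} by realising $\sum_{i=1}^{n}\omega_{i}N_{i}(t)$ as a single Brownian motion with drift. First I would set $\widetilde{N}(t)\triangleq\sum_{i=1}^{n}\omega_{i}N_{i}(t)$, $\widetilde{D}\triangleq\sum_{i=1}^{n}\omega_{i}D_{i}$, $\widetilde{\sigma}^{2}\triangleq\sum_{i=1}^{n}\omega_{i}^{2}\sigma_{i}^{2}$, and $\widetilde{B}(t)\triangleq\widetilde{\sigma}^{-1}\sum_{i=1}^{n}\omega_{i}\sigma_{i}B_{i}(t)$. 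Since $N_{i}(t)=D_{i}t+\sigma_{i}B_{i}(t)$ with the $B_{i}$ independent standard Brownian motions, $\widetilde{B}$ is a continuous centered Gaussian process with independent increments, $\widetilde{B}(0)=0$, and $\mathbb{VAR}[\widetilde{B}(t)]=\widetilde{\sigma}^{-2}\sum_{i=1}^{n}\omega_{i}^{2}\sigma_{i}^{2}\,t=t$; hence $\widetilde{B}$ is a standard Brownian motion and $\widetilde{N}(t)=\widetilde{D}t+\widetilde{\sigma}\widetilde{B}(t)$ is a Brownian motion with drift, with $\widetilde{D}>0$ and $\widetilde{\sigma}>0$ because all $D_{i},\sigma_{i},\omega_{i}$ are positive.

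Next I would observe that, because $\widetilde{N}$ has continuous sample paths and $\widetilde{N}(0)=0<M$, the stopping time $\tau_{M}=\inf\{t>0:\sum_{i=1}^{n}\omega_{i}N_{i}(t)\ge M\}$ coincides with the first hitting time $\min\{t:\widetilde{N}(t)=M\}$ of level $M$ by $\widetilde{N}$. Thus $\tau_{M}$ plays exactly the role of $\tau_{q}$ in Lemma~\ref{inequ-single} with $q=M$, $N=\widetilde{N}$, $D=\widetilde{D}$, $\sigma=\widetilde{\sigma}$ (and $\mathbb{E}[\tau_{M}]=M/\widetilde{D}<\infty$ by Lemma~\ref{lap-weighted}, which is all that is needed for that proof to apply).

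Finally, applying Lemma~\ref{inequ-single} verbatim under these substitutions yields
\[
\frac{\sum_{i=1}^{n}\omega_{i}^{2}\sigma_{i}^{2}}{\sum_{i=1}^{n}\omega_{i}D_{i}}\,\mathrm{\mathbb{E}}[\widetilde{N}(\tau_{M}\wedge T)]-\mathbb{VAR}[\widetilde{N}(\tau_{M}\wedge T)]=\widetilde{D}^{2}\bigl(\mathbb{VAR}[\tau_{M}\wedge T]+2\,\mathrm{\mathbb{E}}[(\tau_{M}-T)_{+}]\,\mathrm{\mathbb{E}}[(T-\tau_{M})_{+}]\bigr)>0,
\]
which is precisely the claimed inequality once $\widetilde{N}(\tau_{M}\wedge T)$ is rewritten as $\sum_{i=1}^{n}\omega_{i}N_{i}(\tau_{M}\wedge T)$. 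There is essentially no hard step: the only points needing a line of justification are the covariance computation showing $\widetilde{B}$ is a standard Brownian motion and the identification of $\tau_{M}$ with a level-crossing time of $\widetilde{N}$, both routine. If one prefers not to invoke Lemma~\ref{inequ-single} as a black box, the same identity can be reproduced directly from the two martingales $\widetilde{N}(t)-\widetilde{D}t$ and $(\widetilde{N}(t)-\widetilde{D}t)^{2}-\widetilde{\sigma}^{2}t$ together with the strong Markov property of $\widetilde{N}$ at $\tau_{M}$, exactly as in the proof of Lemma~\ref{inequ-single}.
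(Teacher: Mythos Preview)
Your proof is correct and follows exactly the same approach as the paper: recognise $\sum_{i=1}^{n}\omega_{i}N_{i}(t)$ as a one-dimensional Brownian motion with drift $\sum_{i=1}^{n}\omega_{i}D_{i}$ and diffusion coefficient $\bigl(\sum_{i=1}^{n}\omega_{i}^{2}\sigma_{i}^{2}\bigr)^{1/2}$, identify $\tau_{M}$ as its first-passage time to level $M$, and apply Lemma~\ref{inequ-single}. The paper's proof is a one-line invocation of this reduction, while you spell out the verification details, but the argument is identical.
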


\begin{proof}
Treating $\sum_{i=1}^{n}\omega_{i}N_{i}(t)$ as a one dimensional
drifted Brownian motion with drift $\sum_{i=1}^{n}\omega_{i}D_{i}$
and diffusion coefficient $\sqrt{\sum_{i=1}^{n}\omega_{i}^{2}\sigma_{i}^{2}}$,
and applying Lemma \ref{inequ-single}, we arrive at the conclusion.
\end{proof}

\subsection{Comparison of AWDR under the Fixed Clearing Frequency\label{subsec:Comparison-of-AWDR}}

In this subsection, we compare AWDR among different clearing policies
under the fixed clearing frequency. This comparison is proposed because
there is an inherent tradeoff between the clearing frequency and AWDR.
This comparison in shipment consolidation setting with a Poisson process
demand is numerically studied in \cite{CMW14}. First, we demonstrate
the optimality of IRP in terms of AWDR, under a fixed clearing frequency.
\begin{thm}
\label{IRPdominanceAWDR} In terms of AWDR, IRP outperforms all renewal
type clearing policies with which cycle times are of finite second
moment, under a fixed clearing frequency.
\end{thm}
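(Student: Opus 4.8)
The plan is to reduce the whole comparison to one elementary second-moment inequality. ``Fixed clearing frequency'' means the long-run clearing rate $1/\mathbb{E}[\tau]$ is fixed, i.e.\ $\mathbb{E}[\tau]=t_{0}$ for the competing renewal-type policy with clearing cycle $\tau$ (with $\mathbb{E}[\tau^{2}]<\infty$). Write $S(\tau)=\sum_{i=1}^{n}\omega_{i}N_{i}(\tau)$. First I would observe, exactly as in the proof of Proposition \ref{keyequality} (using that $N_{i}(t)-D_{i}t$ is a uniformly integrable martingale up to $\tau$, as established in Proposition \ref{unifiedformula}), that $\mathbb{E}[S(\tau)]=(\sum_{i=1}^{n}\omega_{i}D_{i})\,t_{0}$; the same bounds give $\mathbb{E}[S(\tau)^{2}]<\infty$. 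Plugging this into the unified formula
\[
AWDR_{\tau}=\frac{\frac{1}{2\sum_{i=1}^{n}\omega_{i}D_{i}}\mathbb{E}[S(\tau)^{2}]-\frac{\sum_{i=1}^{n}\omega_{i}^{2}\sigma_{i}^{2}}{2(\sum_{i=1}^{n}\omega_{i}D_{i})^{2}}\mathbb{E}[S(\tau)]}{\mathbb{E}[\tau]}
\]
shows that the denominator and the subtracted term in the numerator are pinned down by $t_{0}$ alone, so among policies of the given frequency, minimizing $AWDR_{\tau}$ is the same as minimizing $\mathbb{E}[S(\tau)^{2}]$.

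The second step is the trivial bound $\mathbb{E}[S(\tau)^{2}]\ge(\mathbb{E}[S(\tau)])^{2}=(\sum_{i=1}^{n}\omega_{i}D_{i})^{2}t_{0}^{2}$, with equality iff $S(\tau)$ is almost surely constant. Hence for every admissible $\tau$ with $\mathbb{E}[\tau]=t_{0}$,
\[
AWDR_{\tau}\ \ge\ \frac{\frac{1}{2\sum_{i=1}^{n}\omega_{i}D_{i}}(\sum_{i=1}^{n}\omega_{i}D_{i})^{2}t_{0}^{2}-\frac{\sum_{i=1}^{n}\omega_{i}^{2}\sigma_{i}^{2}}{2(\sum_{i=1}^{n}\omega_{i}D_{i})^{2}}(\sum_{i=1}^{n}\omega_{i}D_{i})t_{0}}{t_{0}}.
\]
The third step is to exhibit the IRP attaining this lower bound: take the threshold $M=(\sum_{i=1}^{n}\omega_{i}D_{i})\,t_{0}$. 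By Lemma \ref{lap-weighted}, $\mathbb{E}[\tau_{M}]=M/\sum_{i=1}^{n}\omega_{i}D_{i}=t_{0}$ and $\mathbb{E}[\tau_{M}^{2}]<\infty$, so this IRP is admissible and has exactly the prescribed clearing frequency; moreover $S(\tau_{M})=M$ a.s., so all the inequalities above are equalities for it and $AWDR_{IRP}$ equals the right-hand side. Combining the three steps yields $AWDR_{IRP}\le AWDR_{\tau}$ for every competing renewal-type policy of the same frequency, which is the assertion.

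I do not expect a serious obstacle here; the mild points that need a line of justification are (i) the uniform-integrability/optional-stopping argument giving $\mathbb{E}[S(\tau)]=(\sum_{i}\omega_{i}D_{i})\mathbb{E}[\tau]$, which is already contained in Propositions \ref{unifiedformula} and \ref{keyequality}; (ii) that $\sum_{i}\omega_{i}D_{i}>0$ (true since each $\omega_{i},D_{i}>0$), so the reduction and the choice of $M$ are well defined; and (iii) verifying that the IRP with this particular $M$ genuinely realizes the target frequency $1/t_{0}$, which is immediate from Lemma \ref{lap-weighted}. The essential content is just that the IRP, by clearing precisely when the weighted level hits $M$, makes $S(\tau)$ deterministic and so drives $\mathbb{E}[S(\tau)^{2}]$ down to its unavoidable floor $(\mathbb{E}[S(\tau)])^{2}$.
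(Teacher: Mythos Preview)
Your proposal is correct and follows essentially the same route as the paper: fix $\mathbb{E}[\tau]$, use the unified $AWDR_{\tau}$ formula so that only $\mathbb{E}[S(\tau)^{2}]$ varies, and then apply $\mathbb{E}[S(\tau)^{2}]\ge (\mathbb{E}[S(\tau)])^{2}$ with equality iff $S(\tau)$ is a.s.\ constant, i.e.\ iff $\tau$ is an IRP clearing time. Your version is slightly more explicit in exhibiting the matching IRP threshold $M=(\sum_{i}\omega_{i}D_{i})t_{0}$ and checking via Lemma~\ref{lap-weighted} that it realizes the prescribed frequency, but this is just spelling out what the paper leaves implicit.
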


\begin{proof}
From Table \ref{ExprofAWDR}, we know AWDR of a clearing policy with
clearing time $\tau$ is
\[
AWDR_{\tau}=\frac{\frac{1}{2\sum_{i=1}^{n}\omega_{i}D_{i}}\mathrm{\mathbb{E}}[(\sum_{i=1}^{n}\omega_{i}N_{i}(\tau))^{2}]-\frac{\sum_{i=1}^{n}\omega_{i}^{2}\sigma_{i}^{2}}{2(\sum_{i=1}^{n}\omega_{i}D_{i})^{2}}\mathrm{\mathbb{E}}[\sum_{i=1}^{n}\omega_{i}N_{i}(\tau)]}{\mathrm{\mathbb{E}}[\tau]}.
\]
Noticing the fixed $\mathrm{\mathbb{E}}[\tau]$ implies $\mathrm{\mathbb{E}}[\sum_{i=1}^{n}\omega_{i}N_{i}(\tau)]$
is fixed, we have
\begin{eqnarray*}
AWDR_{\tau}\geq\frac{\frac{1}{2\sum_{i=1}^{n}\omega_{i}D_{i}}\mathrm{\mathbb{E}}^{2}[\sum_{i=1}^{n}\omega_{i}N_{i}(\tau)]-\frac{\sum_{i=1}^{n}\omega_{i}^{2}\sigma_{i}^{2}}{2(\sum_{i=1}^{n}\omega_{i}D_{i})^{2}}\mathrm{\mathbb{E}}[\sum_{i=1}^{n}\omega_{i}N_{i}(\tau)]}{\mathrm{\mathbb{E}}[\tau]},
\end{eqnarray*}
the equality holds if and only if $\mathrm{\mathbb{E}}[\sum_{i=1}^{n}\omega_{i}N_{i}(\tau)]$
is a constant, which implies IRP achieves the least AWDR with a fixed
clearing frequency.
\end{proof}
One disadvantage of IRP is that it has no upper bound on the cycle
time, in contrast, IRHP are of practical importance since by definition
it has an upper bound on the cycle time. This observation enhances
the value of next result, in which we are able to compare IRHP with
TP, justifying the advantage of IRHP.
\begin{thm}
\label{WHP and TP} Under a fixed clearing frequency, IRHP performs
better than TP, in terms of AWDR.
\end{thm}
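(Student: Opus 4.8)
I would treat the weighted superposition $S(t):=\sum_{i=1}^{n}\omega_{i}N_{i}(t)$ as a single Brownian motion with drift $d:=\sum_{i=1}^{n}\omega_{i}D_{i}$ and variance parameter $s^{2}:=\sum_{i=1}^{n}\omega_{i}^{2}\sigma_{i}^{2}$, so that $\tau_{M}$ is its first passage time to level $M$ and, for an IRHP with threshold $M$ and time cap $T_{1}$, the clearing cycle is $\tau:=\tau_{M}\wedge T_{1}$. The TP to be compared against has parameter $T$, and the fixed-clearing-frequency hypothesis means $T=\mathbb{E}[\tau]=\mathbb{E}[\tau_{M}\wedge T_{1}]$. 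Since $\tau\le T_{1}$ is bounded, $S(t)-dt=\sum_{i=1}^{n}\omega_{i}\sigma_{i}B_{i}(t)$ is a martingale and the optional stopping theorem gives $\mathbb{E}[S(\tau)]=d\,\mathbb{E}[\tau]=dT$; boundedness of $\tau$ also supplies the finite second moment needed to invoke the unified $AWDR$ formula.

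From Table \ref{ExprofAWDR},
\[
AWDR_{\tau}=\frac{1}{\mathbb{E}[\tau]}\left(\frac{1}{2d}\,\mathbb{E}[S(\tau)^{2}]-\frac{s^{2}}{2d^{2}}\,\mathbb{E}[S(\tau)]\right).
\]
Arguing as in the proof of Theorem \ref{IRPdominanceAWDR}, once $\mathbb{E}[\tau]=T$ is fixed the term $\mathbb{E}[S(\tau)]=dT$ is fixed too, so $AWDR_{\tau}$ is an increasing affine function of $\mathbb{E}[S(\tau)^{2}]=\mathbb{VAR}[S(\tau)]+\mathbb{E}[S(\tau)]^{2}$, hence of $\mathbb{VAR}[S(\tau)]$. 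For the TP one has $S(T)\sim\mathrm{Normal}(dT,s^{2}T)$, so $\mathbb{VAR}[S(T)]=s^{2}T$. Thus the whole theorem reduces to the single inequality $\mathbb{VAR}[S(\tau_{M}\wedge T_{1})]\le s^{2}T$.

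To establish this I would apply Lemma \ref{inequ-multi} with time cap $T_{1}$, which yields $\mathbb{VAR}[S(\tau_{M}\wedge T_{1})]<\frac{s^{2}}{d}\,\mathbb{E}[S(\tau_{M}\wedge T_{1})]$; substituting $\mathbb{E}[S(\tau_{M}\wedge T_{1})]=dT$ from the optional-stopping identity above gives exactly $\mathbb{VAR}[S(\tau_{M}\wedge T_{1})]<s^{2}T=\mathbb{VAR}[S(T)]$. Combined with the monotonicity of $AWDR_{\tau}$ in $\mathbb{VAR}[S(\tau)]$ under fixed clearing frequency, this gives $AWDR_{IRHP}<AWDR_{TP}$, so IRHP in fact strictly outperforms TP. No real obstacle remains once Lemma \ref{inequ-multi} is available; the only points needing care are matching the TP parameter to the IRHP expected cycle length under the fixed-frequency constraint, and noting that $\tau_{M}\wedge T_{1}$ is bounded so that all the required moment conditions hold automatically.
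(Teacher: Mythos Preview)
Your proposal is correct and follows essentially the same route as the paper: both fix $\mathbb{E}[\tau]=T$, note that this pins down $\mathbb{E}[S(\tau)]=dT$, reduce the $AWDR$ comparison to the inequality $\mathbb{E}[S(\tau_{M}\wedge T_{1})^{2}]-\frac{s^{2}}{d}\mathbb{E}[S(\tau_{M}\wedge T_{1})]<d^{2}T^{2}$ (equivalently, your $\mathbb{VAR}[S(\tau_{M}\wedge T_{1})]<s^{2}T$), and then invoke Lemma~\ref{inequ-multi} to finish. Your explicit remark that $\tau_{M}\wedge T_{1}$ is bounded, so the optional stopping and moment hypotheses are automatic, is a nice clarification the paper leaves implicit.
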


\begin{proof}
We consider a fixed $\mathrm{\mathbb{E}}[\tau]$ and use the following
notation for the corresponding policy parameters under this $\mathrm{\mathbb{E}}[\tau]$
value: TP with parameter $T$, and IRHP with parameters $M_{H}$ and
$T_{H}$. Recalling the $\mathrm{\mathbb{E}}[\tau]$ expressions for
different policies in Table \ref{ExprofAWDR}, we note that, by assumption,
\begin{eqnarray*}
\mathrm{\mathbb{E}}[\tau_{M_{H}}\wedge T_{H}]=T,
\end{eqnarray*}
which implies
\begin{eqnarray}
\mathrm{\mathbb{E}}[\sum_{i=1}^{n}\omega_{i}N_{i}(\tau_{M_{H}}\wedge T_{H})]=\sum_{i=1}^{n}\omega_{i}D_{i}T.\label{WHP-TP}
\end{eqnarray}
Next, recalling the results in Table \ref{ExprofAWDR} and the assumption
of fixed $\mathrm{\mathbb{E}}[\tau]$ values for all the policies
of interest, we need to show that

\begin{equation}
\mathrm{\mathbb{E}}\left[(\sum_{i=1}^{n}\omega_{i}N_{i}(\tau_{M_{H}}\wedge T_{H}))^{2}\right]-\frac{\sum_{i=1}^{n}\omega_{i}^{2}\sigma_{i}^{2}}{\sum_{i=1}^{n}\omega_{i}D_{i}}\mathrm{\mathbb{E}}\left[\sum_{i=1}^{n}\omega_{i}N_{i}(\tau_{M_{H}}\wedge T_{H})\right]<(\sum_{i=1}^{n}\omega_{i}D_{i})^{2}T^{2}.\label{WHP vs TP}
\end{equation}

In fact, by recalling (\ref{WHP-TP}) and Lemma \ref{inequ-multi},
(\ref{WHP vs TP}) is verified.
\end{proof}
\begin{rem}
From Lemma \ref{WHP and TP}, we can conclude that, given any time
policy and IRHP, as long as they have the same clearing frequency,
the IRHP achieves less AWDR and average cost than the TP. This argument
justifies the advantage of IRHP, which inherits the merits of both
IRP and TP.
\end{rem}

\section{Conclusions\label{sec:Conclusions}}

In this work, we first consider the average cost model of a stochastic
clearing system with multiple drifted Brownian motion inputs. In the
single item case, the quantity-based policy is always the best one.
However, we show that in multi-item case, this result does not hold.
We further identify a set of $(T_{Q}+T)$ type policies, and obtain
a somewhat surprising result that the jointly optimal $(T_{Q}+T)$-policy
is either the optimal quantity based policy or the optimal time based
policy. Later on, we propose an instantaneous rate policy (IRP) and
show that the optimal instantaneous rate policy achieves the least
average cost among a large class of renewal type clearing policies
by applying a martingale-based argument. In this stochastic clearing
model with multiple input processes, time-based policy is an open-loop
policy, without need to track any process realization; quantity-based
policy can be considered as a semi closed-loop policy, which only
requires to track the sum of all input processes as a whole; the instantaneous
rate policy is a truly closed-loop policy since it requires to track
realizations of all input processes. From the perspective of information
value, the optimal policy in stochastic dynamic systems should always
be a closed-loop policy.

Second, we consider stochastic clearing systems from the service performance
perspective. In particular, we propose measuring performance with
the average weighted delay rate. We show that for a given expected
clearing cycle length, IRP outperforms a large class of policies in
terms of AWDR. Obviously, one disadvantage of IRP is that it has no
upper bound on the cycle time. IRHP are of practical importance since
by definition it has an upper bound on the cycle time. More interestingly,
we show that for a fixed clearing frequency, the IRHP performs better
than TP in terms of AWDR (also in terms of average cost), which justifies
the advantage of IRHP.

Important extensions of stochastic clearing systems with multiple
input processes studied here include the integrated inventory/consolidation
problem (see \cite{CL00,CML06,CTL08}), and dynamic pricing problem
which is a revenue maximization model through price-based control
towards rates of input processes.

 \bibliographystyle{plain}
\bibliography{references}

\appendix

\section{Appendix}
\begin{proof}[Proof of Lemma \ref{joint-quantity}]
We show this result with $i=1$. For $i=2,3,\ldots,n$, we can show
it with the same method. From $\sum_{i=1}^{n}D_{i}T_{Q}+\sum_{i=1}^{n}\sigma_{i}B_{i}(T_{Q})=Q$,
we have
\[
B_{n}(T_{Q})=\frac{Q-\sum_{i=1}^{n}D_{i}T_{Q}-\sum_{i=1}^{n-1}\sigma_{i}B_{i}(T_{Q})}{\sigma_{n}},
\]
then for any real numbers $a_{i}$, $i=1,2,\ldots,n$, we have
\begin{eqnarray*}
 &  & \sum_{i=1}^{n}a_{i}B_{i}(T_{Q})-\frac{1}{2}\sum_{i=1}^{n}a_{i}^{2}T_{Q}\\
= &  & \sum_{i=1}^{n-1}a_{i}B_{i}(T_{Q})+a_{n}\frac{Q-\sum_{i=1}^{n}D_{i}T_{Q}-\sum_{i=1}^{n-1}\sigma_{i}B_{i}(T_{Q})}{\sigma_{n}}-\frac{1}{2}\sum_{i=1}^{n}a_{i}^{2}T_{Q}\\
= &  & \sum_{i=1}^{n-1}(a_{i}-\frac{a_{n}}{\sigma_{n}}\sigma_{i})B_{i}(T_{Q})-\sum_{i=1}^{n}(\frac{a_{n}}{\sigma_{n}}D_{i}+\frac{1}{2}a_{i}^{2})T_{Q}+\frac{a_{n}}{\sigma_{n}}Q.
\end{eqnarray*}
By equaling $\sum_{i=1}^{n-1}(a_{i}-\frac{a_{n}}{\sigma_{n}}\sigma_{i})B_{i}(T_{Q})-\sum_{i=1}^{n}(\frac{a_{n}}{\sigma_{n}}D_{i}+\frac{1}{2}a_{i}^{2})T_{Q}=s_{1}B_{1}(T_{Q})+s_{2}T_{Q}$,
we obtain
\begin{eqnarray}
\left\{ \begin{array}{l}
a_{1}-\frac{a_{n}}{\sigma_{n}}\sigma_{1}=s_{1}\\
a_{2}-\frac{a_{n}}{\sigma_{n}}\sigma_{2}=0\\
\vdots\\
a_{n-1}-\frac{a_{n}}{\sigma_{n}}\sigma_{n-1}=0\\
-\sum_{i=1}^{n}(\frac{a_{n}}{\sigma_{n}}D_{i}+\frac{1}{2}a_{i}^{2})=s_{2}
\end{array}\right.\Longrightarrow\left\{ \begin{array}{l}
a_{1}=\frac{a_{n}}{\sigma_{n}}\sigma_{1}+s_{1}\\
a_{2}=\frac{a_{n}}{\sigma_{n}}\sigma_{2}\\
\vdots\\
a_{n-1}=\frac{a_{n}}{\sigma_{n}}\sigma_{n-1}\\
\sum_{i=1}^{n}\frac{1}{2}a_{i}^{2}+\frac{a_{n}}{\sigma_{n}}\sum_{i=1}^{n}D_{i}+s_{2}=0
\end{array}\right.\label{a_i}
\end{eqnarray}
and arrive at
\begin{eqnarray*}
 &  & \frac{1}{2}(\frac{a_{n}}{\sigma_{n}}\sigma_{1}+s_{1})^{2}+\frac{1}{2}(\frac{a_{n}}{\sigma_{n}})^{2}\sum_{i=2}^{n}\sigma_{i}^{2}+\frac{a_{n}}{\sigma_{n}}D+s_{2}\\
= &  & \frac{1}{2}\sigma^{2}(\frac{a_{n}}{\sigma_{n}})^{2}+(s_{1}\sigma_{1}+D)\frac{a_{n}}{\sigma_{n}}+(\frac{1}{2}s_{1}^{2}+s_{2})=0
\end{eqnarray*}
We take the positive root

\begin{equation}
\frac{a_{n}}{\sigma_{n}}=\frac{-(s_{1}\sigma_{1}+D)+\sqrt{(s_{1}\sigma_{1}+D)^{2}-(s_{1}^{2}+2s_{2})\sigma^{2}}}{\sigma^{2}}>0.\label{an/sigman}
\end{equation}
Since $s_{1}^{2}+2s_{2}<0$, there exist $\epsilon>0,\delta>0$ such
that

\begin{equation}
(1+\epsilon)(1+\delta)s_{1}^{2}+2s_{2}=0.\label{s1s2delta}
\end{equation}
In the following, we want to show that $\left\{ \exp(\sum_{i=1}^{n}a_{i}B_{i}(T_{Q}\wedge t)-\frac{1}{2}\sum_{i=1}^{n}a_{i}^{2}(T_{Q}\wedge t))\right\} _{t\geq0}$
is a uniformly integrable martingale. According to \cite[Proposition 2.5.7(ii)]{AL06},
it's sufficient to show that for all $t\geq0$,

\[
\mathrm{\mathbb{E}}\left[\left(\exp(\sum_{i=1}^{n}a_{i}B_{i}(T_{Q}\wedge t)-\sum_{i=1}^{n}\frac{1}{2}a_{i}^{2}(T_{Q}\wedge t))\right)^{1+\delta}\right]<\infty.
\]
For any fixed $t\geq0$, using (\ref{a_i}),
\begin{eqnarray}
 &  & \mathrm{\mathbb{E}}\left[\left(\exp(\sum_{i=1}^{n}a_{i}B_{i}(T_{Q}\wedge t)-\sum_{i=1}^{n}\frac{1}{2}a_{i}^{2}(T_{Q}\wedge t))\right)^{1+\delta}\right]\nonumber \\
= &  & \mathrm{\mathbb{E}}\left[\exp\left((1+\delta)\sum_{i=1}^{n}a_{i}B_{i}(T_{Q}\wedge t)-(1+\delta)\sum_{i=1}^{n}\frac{1}{2}a_{i}^{2}(T_{Q}\wedge t)\right)\right]\nonumber \\
= &  & \mathrm{\mathbb{E}}\left[\exp\left((1+\delta)\frac{a_{n}}{\sigma_{n}}\left(D(T_{Q}\wedge t)+\sum_{i=1}^{n}\sigma_{i}B_{i}(T_{Q}\wedge t)\right)+(1+\delta)\left(s_{1}B_{1}(T_{Q}\wedge t)+s_{2}(T_{Q}\wedge t)\right)\right)\right].\label{UIcheck}
\end{eqnarray}
By Hölder's inequality \cite[Theorem 3.1.11]{AL06}, we obtain that

\begin{eqnarray}
 &  & \mathrm{\mathbb{E}}\left[\exp\left((1+\delta)\frac{a_{n}}{\sigma_{n}}\left(D(T_{Q}\wedge t)+\sum_{i=1}^{n}\sigma_{i}B_{i}(T_{Q}\wedge t)\right)+(1+\delta)\left(s_{1}B_{1}(T_{Q}\wedge t)+s_{2}(T_{Q}\wedge t)\right)\right)\right]\nonumber \\
\leq &  & \mathrm{\mathbb{E}}^{\frac{\epsilon}{1+\epsilon}}\left[\exp\left(\frac{(1+\epsilon)(1+\delta)}{\epsilon}\frac{a_{n}}{\sigma_{n}}\left(D(T_{Q}\wedge t)+\sum_{i=1}^{n}\sigma_{i}B_{i}(T_{Q}\wedge t)\right)\right)\right]\nonumber \\
 &  & \times\mathrm{\mathbb{E}}^{\frac{1}{1+\epsilon}}\left[\exp\left((1+\epsilon)(1+\delta)(s_{1}B_{1}(T_{Q}\wedge t)+s_{2}(T_{Q}\wedge t))\right)\right].\label{Holder inequality}
\end{eqnarray}
From $\frac{a_{n}}{\sigma_{n}}>0$ and $D(T_{Q}\wedge t)+\sum_{i=1}^{n}\sigma_{i}B_{i}(T_{Q}\wedge t)\leq Q$,
for all $t\geq0$, we have
\begin{equation}
\mathrm{\mathbb{E}}^{\frac{\epsilon}{1+\epsilon}}\left[\exp\left(\frac{(1+\epsilon)(1+\delta)}{\epsilon}\frac{a_{n}}{\sigma_{n}}\left(D(T_{Q}\wedge t)+\sum_{i=1}^{n}\sigma_{i}B_{i}(T_{Q}\wedge t)\right)\right)\right]\leq\exp\left((1+\delta)\frac{a_{n}}{\sigma_{n}}Q\right).\label{lessthanQ}
\end{equation}
Recalling (\ref{s1s2delta}) and the optional stopping theorem for
exponential martingale, we have for all $t\geq0$,

\begin{eqnarray}
= &  & \mathrm{\mathbb{E}}\left[\exp\left((1+\epsilon)(1+\delta)(s_{1}B_{1}(T_{Q}\wedge t)+s_{2}(T_{Q}\wedge t))\right)\right]\nonumber \\
= &  & \mathrm{\mathbb{E}}\left[\exp\left((1+\epsilon)(1+\delta)s_{1}B_{1}(T_{Q}\wedge t)-\frac{1}{2}(1+\epsilon)^{2}(1+\delta)^{2}s_{1}^{2}(T_{Q}\wedge t)\right)\right]\nonumber \\
= &  & 1.\label{exponentialmartingale}
\end{eqnarray}
From (\ref{UIcheck}), (\ref{Holder inequality}), (\ref{lessthanQ}),
and (\ref{exponentialmartingale}), we have $\mathrm{\mathbb{E}}\left[\left(\exp(\sum_{i=1}^{n}a_{i}B_{i}(T_{Q}\wedge t)-\sum_{i=1}^{n}\frac{1}{2}a_{i}^{2}(T_{Q}\wedge t))\right)^{1+\delta}\right]<\infty$
for all $t\geq0$, which implies that $\left\{ \exp(\sum_{i=1}^{n}a_{i}B_{i}(T_{Q}\wedge t)-\frac{1}{2}\sum_{i=1}^{n}a_{i}^{2}(T_{Q}\wedge t))\right\} _{t\geq0}$
is a uniformly integrable martingale. Therefore, by the optional stopping
theorem and Vitali convergence theorem,
\[
\mathrm{\mathbb{E}}\left[\exp(\sum_{i=1}^{n}a_{i}B_{i}(T_{Q})-\frac{1}{2}\sum_{i=1}^{n}a_{i}^{2}T_{Q})\right]=1.
\]
Recalling that $(a_{1},a_{2},\ldots,a_{n})$ in (\ref{a_i}) are selected
such that $\sum_{i=1}^{n}a_{i}B_{i}(T_{Q})-\frac{1}{2}\sum_{i=1}^{n}a_{i}^{2}T_{Q}=s_{1}B_{1}(T_{Q})+s_{2}T_{Q}+\frac{a_{n}}{\sigma_{n}}Q$,
and (\ref{an/sigman}), we have
\begin{eqnarray*}
 &  & \mathrm{\mathbb{E}}\left[\exp(s_{1}B_{1}(T_{Q})+s_{2}T_{Q})\right]\\
= &  & \exp\left(\frac{s_{1}\sigma_{1}+D-\sqrt{(s_{1}\sigma_{1}+D)^{2}-(s_{1}^{2}+2s_{2})\sigma^{2}}}{\sigma^{2}}Q\right),
\end{eqnarray*}
which is the joint moment generation function for $(B_{1}(T_{Q}),T_{Q})$.
Moreover,
\[
\mathrm{\mathbb{E}}[B_{1}(T_{Q})T_{Q}]=\frac{\partial^{2}\mathrm{\mathbb{E}}[\exp(s_{1}B_{1}(T_{Q})+s_{2}T_{Q})]}{\partial s_{1}\partial s_{2}}|_{s_{1}=s_{2}=0}=-\frac{\sigma_{1}Q}{D^{2}}.
\]
\end{proof}

\end{document}